\newtheorem{thm}{Theorem}[section]
\newtheorem{cor}[thm]{Corollary}
\newtheorem{lem}[thm]{Lemma}
\newtheorem{defi}[thm]{Definition}
\numberwithin{equation}{section}
\journal{}
\begin{document}
\begin{spacing}{1.15}
\begin{frontmatter}
\title{The ordering of hypertrees and unicyclic hypergraphs by the traces of $\mathcal{A}_{\alpha}$-tensor}

\author{Jueru Liu}
\author{Lizhu Sun}\ead{lizhusun@hrbeu.edu.cn}
\author{Changjiang Bu}
\address{School of Mathematical Sciences, Harbin Engineering University, Harbin 150001, PR China}

\begin{abstract}
For a real number $\alpha\in[0,1]$ and a $k$-uniform hypergraph $\mathcal{H}$, $\mathcal{A}_{\alpha}(\mathcal{H})=\alpha\mathcal{D}(\mathcal{H})+(1-\alpha)\mathcal{A}(\mathcal{H})$ is called the $\mathcal{A}_{\alpha}$-tensor of $\mathcal{H}$, where $\mathcal{D}(\mathcal{H})$ and $\mathcal{A}(\mathcal{H})$ are the degree tensor and adjacency tensor of $\mathcal{H}$, respectively.
The sum of the $d$-th powers of all eigenvalues of $\mathcal{A}_{\alpha}(\mathcal{H})$ is called the $d$-th order $\mathcal{A}_{\alpha}$-spectral moment of $\mathcal{H}$, which is equal to the $d$-th order trace of $\mathcal{A}_{\alpha}(\mathcal{H})$.
In this paper, some hypergraphs are ordered lexicographically by their $\mathcal{A}_{\alpha}$-spectral moments in non-decreasing order.
The first, the second, the last and the second last hypergraphs among all $k$-uniform linear unicyclic hypergraphs and hypertrees are characterized, respectively.
We give the first and the last hypergraphs among all $k$-uniform linear unicyclic hypergraphs with given grith,
and characterize the last hypertree among all $k$-uniform hypertrees with given diameter.
Furthermore, we determine some extreme values of the $\mathcal{A}_{\alpha}$-spectral moments for hypertrees and linear unicyclic hypergraphs, respectively.
\end{abstract}

\begin{keyword}
hypergraph, spectral moment, $\mathcal{A}_{\alpha}$-tensor, trace
\\
\emph{AMS classification (2020):}
05C65, 15A69.
\end{keyword}
\end{frontmatter}

\section{Introduction}
In 1981, Cvetkovi{\'c} \cite{0Some} proposed that eigenvalues can be used for classification and ordering of graphs, which is first used in \cite{firstordergraph}.
Since small changes in eigenvalues will restrict changes to the relevant structural invariants, graphs can be ordered lexicographically by their spectra \cite{cvetkovic2009introduction}.
The $d$-th order spectral moment of a graph is the sum of the $d$-th powers of all its eigenvalues.
In 1984, the spectral moments were first used to ordering graphs lexicographically \cite{sixvertices}.
Ordering graphs by their spectral moments is a slight modifcation of ordering by eigenvalues \cite{sixvertices}.
In 1987, Cvetkovi{\'c} and Rowlinson \cite{Drago1987Spectra} defined the lexicographical ordering of graphs by spectral moments, and characterized the first and the last graphs among all trees and all unicyclic graphs, respectively.
In \cite{WOS:000283893700001}, the last $\lfloor\frac{d}{2}\rfloor+1$ trees among all trees with diameter $d$ were given.
In \cite{diameter}, the last $d+\lfloor\frac{d}{2}\rfloor-2$ graphs among all unicyclic graphs with diameter $d$ were characterized.
Other works on the ordering of graphs by spectral moments can be referred to \cite{WOS:000301826900014,WOS:000486390400051,tree,WOS:000446809500027}.

In 2005, Qi \cite{ref2} and Lim \cite{ref1} proposed the definition of tensor eigenvalues independently.
The eigenvalues of tensors and related problems have received extensive attention \cite{WOS:000346407800007,Fan2019AMS,changanSIAM2022,LHH2024,Nikiforov2017,MP}, especially the trace of tensors.
In 2011, Morozov and Shakirov \cite{ref6} gave an expression for the trace of a tensor.
In 2013, Hu et al. \cite{ref7} proved that the $d$-th order trace of a tensor is equal to the sum of the $d$-th powers of all its eigenvalues.
In 2015, Shao et al. \cite{ref8} gave a formula for the tensor trace in terms of some digraph parameters.
The $d$-th order spectral moment of a uniform hypergraph is the sum of $d$-th powers of all its eigenvalues.
The characteristic polynomial coefficients and all eigenvalues of a uniform hypergraph can be determined by all orders of its spectral moments \cite{ref11,ref23,ref10,Duan2023doublestar}.
In 2021, Clark and Cooper \cite{ref10} expressed the spectral moments of uniform hypergraphs by the number of Veblen hypergraphs and obtained the ``Harary-Sachs" theorem for hypergraphs.
In 2024, Chen, van Dam and Bu \cite{ref23} expressed the spectral moments of power hypergraphs by the counts of parity-closed walks in graphs, and gave the characteristic polynomials and the multiplicity of the spectral radius of power hypergraphs \cite{ref23}.
The spectral moments of a uniform hypergraph can also be used to characterize its structure \cite{ref16,ref31} and parameters \cite{ref35,ref22}.
In 2023, the lexicographical ordering of hypergraphs via spectral moments was investigated \cite{sorder}, which is called the $S$-order of hypergraphs, and the first and the last hypergraphs in $S$-order among all hypertrees and all linear unicyclic hypergraphs were characterized, respectively.

In 2017, Nikiforov \cite{alphaofgraph} proposed the $A_{\alpha}$-matrix of a graph, which is a linear combination of degree matrix and adjacency matrix, and some properties of $A_{\alpha}$-matrix were given.
In 2020, Lin, Guo and Zhou \cite{alphaofhypergraph} generalized the $A_{\alpha}$-spectra of graphs to hypergraphs, proposed and investigated the $\mathcal{A}_{\alpha}$-tensor of uniform hypergraphs.
The research on $\mathcal{A}_{\alpha}$-spectra of graphs and hypergraphs has attracted the attention of scholars \cite{WOS:000547249300008,WOS:000443667300012,WOS:000864637600006,WOS:001490552500001,WOS:000699906300018}.

The $d$-th order $\mathcal{A}_{\alpha}$-spectral moment of a $k$-uniform hypergraph is the sum of the $d$-th powers of all eigenvalues of its $\mathcal{A}_{\alpha}$-tensor.
Note that the $\mathcal{A}_{\alpha}$-tensor of a $k$-uniform hypergraph is a linear combination of its degree tensor and adjacency tensor for $\alpha\in(0,1)$, then the $\mathcal{A}_{\alpha}$-spectral moments cannot be directly obtained from its spectral moments.
In this paper, we use Shao et al.'s tensor trace formula to give some expressions for the $\mathcal{A}_{\alpha}$-spectral moments of $k$-uniform hypergraphs.
We investigate the lexicographical ordering of hypergraphs by their $\mathcal{A}_{\alpha}$-spectral moment for $\alpha\in(0,1)$, which is abbreviated as $S_{\alpha}$-order.
And some extreme values of the $\mathcal{A}_{\alpha}$-sepctral moments are obtained.
When $k=2$, some conclusions in this paper are also new results for the $A_{\alpha}$-spectral moments of graphs.

This paper is organized as follows.
In Section 2, some notations and concepts about tensors and hypergraphs are introduced.
In Section 3, some expressions for the $\mathcal{A}_{\alpha}$-spectral moments of uniform hypergraphs are given.
In Section 4, we introduce some operations of moving hyperedges or hyperpaths on uniform hypergraphs and prove that the $S_{\alpha}$-order of hypergraphs is monotonic on these transformations.
In Section 5, we characterize the first, the second, the last and the second last hypergraphs in $S_{\alpha}$-order among all $k$-uniform linear unicyclic hypergraphs, and give the first and the last hypergraphs among all $k$-uniform linear unicyclic hypergraphs with given grith.
In Section 6, we give the first, the second, the last and the second last hypertrees in $S_{\alpha}$-order among all $k$-uniform hypertrees, and the last hypertree in $S_{\alpha}$-order among all $k$-uniform hypertrees with given diameter is characterized.
In Section 7, we determine the hypergraphs which have the largest and the second-largest $2$-nd order $\mathcal{A}_{\alpha}$-spectral moments among all linear unicyclic hypergraphs and hypertrees, respectively. And the hypergraphs with the smallest and the second-smallest $k+2$-nd $\mathcal{A}_{\alpha}$-spectral moments among all linear unicyclic hypergraphs and hypertrees are determined, respectively.

\section{Preliminaries}
In this section, some notations and concepts about tensors and hypergraphs are introduced.
For a positive integer $n$, let $[n]=\{1,2,\ldots,n\}$ and $[n]^{k}=\{i_1i_2\cdots i_k|~ i_j\in[n]~ \mathrm{for}~ j=1,2,\ldots,k\}$.
A $k$-order $n$-dimensional complex tensor $\mathcal{A}=(a_{i_1 i_2 \cdots i_k})$ is a multidimensional array with $n^k$ entries on complex number field $\mathbb{C}$, where $i_1i_2\cdots i_k\in[n]^k$.
Denote the set of $n$-dimensional complex vectors and the set of $k$-order $n$-dimensional complex tensors by $\mathbb{C}^{n}$ and $\mathbb{C}^{[k,n]}$, respectively.
For $\mathcal{A}=(a_{i_1 i_2 \cdots i_k})\in\mathbb{C}^{[k,n]}$ and $x=(x_1,\ldots,x_n)^{\mathsf{T}}\in\mathbb{C}^{n}$, $\mathcal{A}x^{k-1}$ is a vector in $\mathbb{C}^{n}$ whose $i$-th component is $$\left( \mathcal{A}x^{k-1} \right)_{i}=\sum\limits_{i_2,\ldots,i_k=1}^{n}a_{i i_2 \cdots i_k}x_{i_2}\cdots x_{i_k}.$$
A number $\lambda\in\mathbb{C}$ is called an \textit{eigenvalue} of $\mathcal{A}$ if there exists a nonzero vector $x\in\mathbb{C}^{n}$ such that $$\mathcal{A}x^{k-1}=\lambda x^{[k-1]},$$ where $x^{[k-1]}=(x_1^{k-1},\ldots,x_n^{k-1})^{\mathsf{T}}$ \cite{ref1,ref2}.
And the number of eigenvalues of $\mathcal{A}$ is $n(k-1)^{n-1}$.

In \cite{ref6}, the $d$-th order \textit{trace} $\mathrm{Tr}_{d}(\mathcal{A})$ of a tensor $\mathcal{A}\in\mathbb{C}^{[k,n]}$ is expressed as
$$\mathrm{Tr}_{d}(\mathcal{A})=(k-1)^{n-1}\sum\limits_{d_1+\cdots+d_n=d}\prod\limits_{i=1}^{n}\frac{1}{(d_i(k-1))!}\left(\sum\limits_{\alpha_i\in[n]^{k-1}}(\mathcal{A})_{i\alpha_i}\frac{\partial}{\partial a_{i\alpha_i}}\right)^{d_{i}}\mathrm{tr}(A^{d(k-1)}),$$
where $A=(a_{ij})$ is an $n\times n$ auxiliary matrix, $d_1\ldots,d_n$ are nonnegative integers and $\frac{\partial}{\partial a_{i\alpha_i}}=\frac{\partial}{\partial a_{i i_2}}\cdots\frac{\partial}{\partial a_{i i_k}}$ for $\alpha_{i}=i_2\cdots i_k$.

Shao et al. \cite{ref8} gave a graph theoretical formula for the trace $\mathrm{Tr}_{d}(\mathcal{A})$ of a tensor $\mathcal{A}$.
For a positive integer $d$, let $$\mathcal{F}_{d}=\{(i_{1}\alpha_{1},\ldots,i_{d}\alpha_{d})|\ 1\le i_{1}\le\cdots\le i_{d}\le n,~ \alpha_{j}\in[n]^{k-1},~ j=1,\ldots,d\}.$$
For $f=(i_{1}\alpha_{1},\ldots,i_{d}\alpha_{d})\in\mathcal{F}_{d}$, where $i_j\alpha_j=i_ji_{j_2}\cdots i_{j_k},~ j\in[d]$,
let $E_{j}(f)=\{(i_j,i_{j_2}),\ldots,(i_j,i_{j_k})\}$ be the multi-set of arcs from $i_j$ to $i_{j_2},\ldots,i_{j_k}$, and let $E(f)=\bigcup_{j=1}^{d}E_j(f)$ be a arc multi-set.
Let the multi-digraph $D(f)=(V(f),E(f))$, where $V(f)=V(E(f))$.
Let $b(f)$ be the product of the factorials of the multiplicities of all arcs of $D(f)$, let $c(f)$ be the product of the factorials of the outdegrees of all vertices of $D(f)$, and let $W(f)$ be the set of all Euler tours of $D(f)$.
For a $k$-order $n$-dimensional tensor $\mathcal{A}$, let $\pi_{f}(\mathcal{A})=\prod_{j=1}^{d} (\mathcal{A})_{i_j\alpha_j}$.

\begin{lem}\label{Shao}\cite{ref8}
		Let $\mathcal{A}$ be a $k$-order $n$-dimensional tensor. Then
		\begin{equation*}\label{shaodegongshi}
			\textnormal{Tr}_{d}(\mathcal{A})=(k-1)^{n-1} \sum_{f \in \mathcal{F}_{d}} \frac{b(f)}{c(f)} \pi_{f}(\mathcal{A})|W(f)|.
		\end{equation*}
	\end{lem}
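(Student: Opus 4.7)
The plan is to derive the formula by starting from Morozov and Shakirov's expression for $\mathrm{Tr}_d(\mathcal{A})$ (stated in the preceding paragraph of the excerpt) and converting the action of the differential operators on $\mathrm{tr}(A^{d(k-1)})$ into a count of Euler tours of certain multi-digraphs. First I would expand
\[
\mathrm{tr}\bigl(A^{d(k-1)}\bigr)=\sum_{v_{1},\ldots,v_{d(k-1)}\in[n]} a_{v_{1}v_{2}}a_{v_{2}v_{3}}\cdots a_{v_{d(k-1)}v_{1}},
\]
so that each summand corresponds to a closed walk of length $d(k-1)$ in the complete digraph (with loops) on $[n]$ in the auxiliary entries $a_{ij}$.

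Next, I would apply the operator $\bigl(\sum_{\alpha_{i}\in[n]^{k-1}}(\mathcal{A})_{i\alpha_{i}}\frac{\partial}{\partial a_{i\alpha_{i}}}\bigr)^{d_{i}}$ for each $i$. Expanding the $d_{i}$-th power multinomially and noting that $\frac{\partial}{\partial a_{i\alpha_{i}}}=\frac{\partial}{\partial a_{ii_{2}}}\cdots\frac{\partial}{\partial a_{ii_{k}}}$ kills all closed walks except those whose list of arcs leaving $i$ matches, as a multiset, the union of the chosen $\alpha_{i}$'s. Summing over the partitions $d_{1}+\cdots+d_{n}=d$ and grouping the chosen tuples by the unordered list $f=(i_{1}\alpha_{1},\ldots,i_{d}\alpha_{d})\in\mathcal{F}_{d}$, each surviving term contributes the monomial $\pi_{f}(\mathcal{A})=\prod_{j=1}^{d}(\mathcal{A})_{i_{j}\alpha_{j}}$, and the closed walks that still contribute are precisely those that traverse each arc of $D(f)$ exactly the number of times equal to its multiplicity in $E(f)$, i.e.\ the Euler tours of $D(f)$. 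This directly produces the factor $|W(f)|$.

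The remaining task, and the true obstacle, is the bookkeeping of the combinatorial constants. Three sources of factorials must be reconciled: the $\frac{1}{(d_{i}(k-1))!}$ coming from Morozov-Shakirov; the multinomial coefficients from expanding the $d_{i}$-th power of the differential operator; and, for each arc that appears with multiplicity $m>1$ in $D(f)$, an extra $m!$ coming from differentiating a repeated variable. I would verify that the first two combine with the ambiguity in ordering the $d_{i}$ outgoing blocks at each vertex $i$ (which together span $\deg^{+}_{D(f)}(i)$ positions among the walk entries) to give exactly $1/c(f)=1/\prod_{i}(\deg^{+}_{D(f)}(i))!$ in the denominator, while the repeated-differentiation factorials collect into $b(f)=\prod_{\text{arcs }e}(\text{mult}(e))!$ in the numerator. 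Matching these factors case-by-case, using the identity $d_{i}(k-1)=\deg^{+}_{D(f)}(i)$ for any $f$ with $d_{i}$ hyperedges rooted at $i$, yields the claimed expression.

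Altogether, this converts Morozov-Shakirov's formula into the graph-theoretic form
\[
\mathrm{Tr}_{d}(\mathcal{A})=(k-1)^{n-1}\sum_{f\in\mathcal{F}_{d}}\frac{b(f)}{c(f)}\,\pi_{f}(\mathcal{A})\,|W(f)|,
\]
as required. The hardest step is not the identification of Euler tours (which is conceptually transparent from the differentiation argument) but rather the careful reconciliation of factorial factors in the presence of repeated arcs in $D(f)$; one must be meticulous to avoid double-counting when the same tensor entry is selected more than once.
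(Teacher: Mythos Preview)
The paper does not prove this lemma; it is quoted verbatim from Shao, Qi and Hu (the reference \cite{ref8}) and used as a black box. So there is no ``paper's own proof'' to compare against. Your sketch is essentially the argument those authors gave: expand $\mathrm{tr}(A^{d(k-1)})$ as a sum over closed walks, apply the differential operators, and recognise the surviving walks as Euler tours of $D(f)$.

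One clarification about the bookkeeping you flag as the main obstacle: it is simpler than you make it sound, and your description of the three sources of factorials is slightly off. There are no multinomial coefficients to reconcile. When you expand $\bigl(\sum_{\alpha}(\mathcal{A})_{i\alpha}\,\partial/\partial a_{i\alpha}\bigr)^{d_i}$ as a sum over \emph{ordered} tuples $(\alpha^{(1)},\ldots,\alpha^{(d_i)})$ and then let $i$ run over $[n]$, you are summing precisely over $\mathcal{F}_d$, because elements of $\mathcal{F}_d$ already record an ordering (only the leading indices $i_j$ are required to be non-decreasing; the $\alpha_j$'s are arbitrary). Thus the passage from Morozov--Shakirov's sum over $(d_1,\ldots,d_n)$ to the sum over $f\in\mathcal{F}_d$ is a straight reindexing. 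The factor $1/c(f)$ is then nothing but the Morozov--Shakirov denominator $\prod_i 1/(d_i(k-1))!$ rewritten, since $d_i(k-1)=\deg^{+}_{D(f)}(i)$; and $b(f)=\prod_{\text{arcs }(i,j)}(m_{ij})!$ arises directly from applying $\partial^{m_{ij}}/\partial a_{ij}^{m_{ij}}$ to $a_{ij}^{m_{ij}}$. Each piece lands in its place without any cancellation among the three sources you list.
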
	

A hypergraph $\mathcal{H}$ is called \textit{k-uniform} if every hyperedge of $\mathcal{H}$ contains exactly $k$ vertices.
Let $C_{m}^{(k)}$, $P_{m}^{(k)}$ and $S_{m}^{(k)}$ denote the $k$-uniform hypercycle, hyperpath and hyperstar with $m$ hyperedges, respectively.
For a $k$-uniform hypergraph $\mathcal{H}$ with $n$ vertices, its \textit{adjacency tensor} $\mathcal{A}(\mathcal{H})=(a_{i_1 i_2 \cdots i_k})$ is a $k$-order $n$-dimensional tensor \cite{ref3}, where
		$$a_{i_1 i_2 \cdots i_k}=\left\{\begin{array}{cl}
			\frac{1}{(k-1)!},&  \textnormal{if}\ \{i_1,i_2,\ldots,i_k\}\in E(\mathcal{H}),\\
			0,& \textnormal{otherwise}.
		\end{array}\right.$$
When $k=2$, $\mathcal{A}(\mathcal{H})$ is the adjacency matrix of the graph $\mathcal{H}$.
The tensor $\mathcal{Q}(\mathcal{H})=\mathcal{D}(\mathcal{H})+\mathcal{A}(\mathcal{H})$ is the \textit{signless Laplacian tensor} of $\mathcal{H}$ \cite{ref5}, where $\mathcal{D}(\mathcal{H})$ is the diagonal tensor of vertex degrees.

The \textit{$\mathcal{A}_{\alpha}$-tensor} of a $k$-uniform hypergraph $\mathcal{H}$ is defined as \cite{alphaofhypergraph}
$$\mathcal{A}_{\alpha}(\mathcal{H})=\alpha\mathcal{D}(\mathcal{H})+(1-\alpha)\mathcal{A}(\mathcal{H}),~ 0\le\alpha\le1.$$
Obviously, $$\mathcal{A}(\mathcal{H})=\mathcal{A}_{0}(\mathcal{H}),~ \mathcal{D}(\mathcal{H})=\mathcal{A}_{1}(\mathcal{H}),~ \mathcal{Q}(\mathcal{H})=2\mathcal{A}_{\frac{1}{2}}(\mathcal{H}).$$
In this paper, we order some hypergraphs lexicographically by their $\mathcal{A}_{\alpha}$-spectral moments in non-decreasing order.

\begin{defi}
For two $k$-uniform hypergraphs $\mathcal{H}_1$ and $\mathcal{H}_2$ with $n$ vertices,
if there exists an integer $d\in[n(k-1)^{n-1}-1]$ such that $\mathrm{Tr}_t(\mathcal{A}_{\alpha}(\mathcal{H}_1))=\mathrm{Tr}_{t}(\mathcal{A}_{\alpha}(\mathcal{H}_2))$ for $t=0,1,\ldots,d-1$ and $\mathrm{Tr}_d(\mathcal{A}_{\alpha}(\mathcal{H}_1))<\mathrm{Tr}_{d}(\mathcal{A}_{\alpha}(\mathcal{H}_2))$, then $\mathcal{H}_{1}$ comes before $\mathcal{H}_{2}$ in $S_{\alpha}$-order, denoted by $\mathcal{H}_{1} \prec_{\alpha} \mathcal{H}_{2}$.
If $\mathrm{Tr}_t(\mathcal{A}_{\alpha}(\mathcal{H}_1))=\mathrm{Tr}_{t}(\mathcal{A}_{\alpha}(\mathcal{H}_2))$ for $t=0,1,\ldots,n(k-1)^{n-1}-1$, then $\mathcal{H}_1=_{\alpha}\mathcal{H}_2$.
And $\mathcal{H}_{1}\preceq_{\alpha}\mathcal{H}_{2}$ if $\mathcal{H}_{1}\prec_{\alpha}\mathcal{H}_{2}$ or $\mathcal{H}_{1}=_{\alpha}\mathcal{H}_{2}$.
\end{defi}

It is obvious that two tensors have the same spectrum if and only if their trace of any order is equal.
Hence, for two $k$-uniform hypergraphs $\mathcal{H}_1$ and $\mathcal{H}_2$ with $n$ vertices, $\mathcal{H}_{1}=_{\alpha}\mathcal{H}_{2}$ if and only if they have the same $\mathcal{A}_{\alpha}$-spectrum.


\section{$\mathcal{A}_{\alpha}$-spectral moments of hypergraphs}
For a $k$-uniform hypergraph $\mathcal{H}$ with $n$ vertices, it is known that $\mathrm{Tr}_{0}(\mathcal{A}_{\alpha}(\mathcal{H}))$ is equal to the number of eigenvalues of $\mathcal{A}_{\alpha}(\mathcal{H})$, i.e., $\mathrm{Tr}_{0}(\mathcal{A}_{\alpha}(\mathcal{H}))=n(k-1)^{n-1}$.
In this section, some expressions of $\mathcal{A}_{\alpha}$-spectral moment are given.

Given $f=(i_{1}\alpha_{1},\ldots,i_{d}\alpha_{d})\in\mathcal{F}_{d}$, where $i_j\alpha_j=i_ji_{j_2}\cdots i_{j_k}$ for $j\in[d]$, we construct a $k$-uniform multi-hypergraph $\mathcal{H}_f=(V(\mathcal{H}_f),E(\mathcal{H}_f))$, where $V(\mathcal{H}_f)=V(f)$ and $E(\mathcal{H}_{f})=\{\{i_j,i_{j_2},\ldots,i_{j_k}\}^{a}|~ \{i_j,i_{j_2},\ldots,i_{j_k}\}~ \mathrm{forms}~ \mathrm{a}~ \mathrm{hyperedge}~ \mathrm{and}~ a\ \mathrm{is}~ \mathrm{the}~ \mathrm{number}~ \mathrm{of}\\ \mathrm{elements}~ \mathrm{in}~ f~ \mathrm{that}~ \mathrm{have}~ \mathrm{the}~ \mathrm{same}~ \mathrm{indices}~ \mathrm{as}~ i_j\alpha_j\}$.
If the hypergraph obtained by removing duplicate hyperedges of $\mathcal{H}_{f}$ is a subhypergraph of a $k$-uniform hypergraph $\mathcal{H}$, then $\mathcal{H}_{f}$ is called an \textit{infragraph} of $\mathcal{H}$.

A \textit{Veblen hypergraph} is a $k$-uniform, $k$-valent (i.e., the degree of every vertex is a multiple of $k$) multi-hypergraph \cite{ref10}.
For a $k$-uniform hypergraph $\mathcal{H}$, let $\mathcal{V}_{h}(\mathcal{H})$ be the set of all connected Veblen infragraphs of $\mathcal{H}$ with at most $h$ hyperedges.
For $H\in\mathcal{V}_{h}(\mathcal{H})$, let $\mathcal{F}_{d}(H)=\{f\in\mathcal{F}_d|\ \mathcal{H}_f\cong H\}$.

By using Shao et al.'s tensor trace formula \cite{ref8}, the expression for $\mathcal{A}_{\alpha}$-spectral moment of any order of a $k$-uniform hypergraph is given as follows.
For a $k$-uniform hypergraph $\mathcal{H}$ with degree sequence $d_1,d_2,\ldots,d_n$ and a nonnegative integer $s$, let $$\phi(s)=(k-1)^{n-1}\alpha^{s}\sum_{i=1}^{n}d_{i}^{s}.$$

\begin{thm}\label{T}
Let $k\ge2$ and $\mathcal{H}$ be a $k$-uniform hypergraph with degree sequence $d_1,d_2,\ldots,d_n$.
Then
\begin{equation*}
\begin{aligned}
&\mathrm{Tr}_{d}(\mathcal{A}_{\alpha}(\mathcal{H}))=\\& \phi(d)+(1-\alpha)^{d}\mathrm{Tr}_{d}(\mathcal{A}(\mathcal{H}))+d(k-1)^{n}\sum\limits_{H\in\mathcal{V}_{d-1}(\mathcal{H})}\sum\limits_{f\in\mathcal{F}_{d}(H)}\frac{\tau(f)\pi_{f}(\mathcal{A}_{\alpha}(\mathcal{H}))}{\prod\limits_{v\in V(f)}d^{+}(v)},
\end{aligned}
\end{equation*}
where $\tau(f)$ is the number of rooted spanning trees of the multi-digraph $D(f)$ with a specified root.
\end{thm}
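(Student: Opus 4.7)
The plan is to apply Shao et al.'s trace formula (Lemma \ref{Shao}) to the tensor $\mathcal{A}_{\alpha}(\mathcal{H})$ and then organize the resulting sum over $f\in\mathcal{F}_{d}$ according to the support pattern of $\pi_{f}(\mathcal{A}_{\alpha}(\mathcal{H}))$. The key observation I would exploit from the start is that $\mathcal{D}(\mathcal{H})$ and $\mathcal{A}(\mathcal{H})$ have disjoint supports: $\mathcal{D}(\mathcal{H})$ lives only on the diagonal indices $i^{k}$, while $\mathcal{A}(\mathcal{H})$ lives only on index sequences whose underlying set is a hyperedge of $\mathcal{H}$. Consequently, for $\pi_{f}(\mathcal{A}_{\alpha}(\mathcal{H}))\neq 0$ each factor $(\mathcal{A}_{\alpha}(\mathcal{H}))_{i_{j}\alpha_{j}}$ is either a \emph{loop position} with $\alpha_{j}=i_{j}\cdots i_{j}$ contributing $\alpha d_{i_{j}}$, or an \emph{edge position} with $\{i_{j}\}\cup\alpha_{j}\in E(\mathcal{H})$ contributing $(1-\alpha)/(k-1)!$. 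This lets me split $\mathcal{F}_{d}$ into three disjoint sets: all positions of loop type (type I), all of edge type (type II), and mixed (type III), and I would treat the three contributions separately.

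For type I, the digraph $D(f)$ consists solely of loops; connectedness (required for $|W(f)|\neq 0$) forces $i_{1}=\cdots=i_{d}=i$ for a single vertex $i$. A direct computation of $b(f)$, $c(f)$, $|W(f)|$ together with $\pi_{f}(\mathcal{A}_{\alpha}(\mathcal{H}))=\alpha^{d}d_{i}^{d}$ collapses the corresponding piece of Lemma \ref{Shao} to $(k-1)^{n-1}\sum_{i=1}^{n}\alpha^{d}d_{i}^{d}=\phi(d)$. For type II, one has $\pi_{f}(\mathcal{A}_{\alpha}(\mathcal{H}))=(1-\alpha)^{d}\pi_{f}(\mathcal{A}(\mathcal{H}))$, so factoring out $(1-\alpha)^{d}$ and reapplying Lemma \ref{Shao} to $\mathcal{A}(\mathcal{H})$ produces exactly the term $(1-\alpha)^{d}\mathrm{Tr}_{d}(\mathcal{A}(\mathcal{H}))$.

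For type III, at least one position is a loop, hence $\mathcal{H}_{f}$ carries at most $d-1$ multi-hyperedges; since $D(f)$ must be both connected and Eulerian, $\mathcal{H}_{f}$ is a connected Veblen infragraph of $\mathcal{H}$, i.e., $\mathcal{H}_{f}\in\mathcal{V}_{d-1}(\mathcal{H})$. Grouping the $f$'s by the isomorphism class $H$ of $\mathcal{H}_{f}$ produces the outer double sum stated in the theorem. To convert $|W(f)|$ into the asserted shape I would apply the BEST theorem to the Eulerian multi-digraph $D(f)$, obtaining $|W(f)|=\tau(f)\prod_{v\in V(f)}(d^{+}(v)-1)!$. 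Combined with $c(f)=\prod_{v}d^{+}(v)!$, the factorials collapse into $\prod_{v}d^{+}(v)$ in the denominator, matching the denominator of the theorem's summand.

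The main obstacle I expect is the final coefficient-matching in type III. After grouping, the residual prefactor in front of $\tau(f)\pi_{f}(\mathcal{A}_{\alpha}(\mathcal{H}))/\prod_{v}d^{+}(v)$ coming out of Lemma \ref{Shao} is of the form $(k-1)^{n-1}b(f)$, and one must verify that these factors aggregate into exactly $d(k-1)^{n}$. This requires careful bookkeeping of the arc multiplicities contributed by the loop positions (which enter $b(f)$), the effect of the ordering constraint $i_{1}\le\cdots\le i_{d}$ built into the definition of $\mathcal{F}_{d}$, and the combinatorial choice of which of the $d$ positions are of loop type versus edge type. Once this combinatorial identity is pinned down, the three contributions assemble into the stated formula.
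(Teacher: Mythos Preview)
Your decomposition into loop-only, edge-only, and mixed tuples is exactly the paper's proof: it defines $\mathcal{F}_{d}^{(1)},\mathcal{F}_{d}^{(2)},\mathcal{F}_{d}^{(3)}$ in the same way and evaluates the three pieces to $\phi(d)$, $(1-\alpha)^{d}\mathrm{Tr}_{d}(\mathcal{A}(\mathcal{H}))$, and the double sum, respectively. The only structural difference is that the paper does not carry out the type~III computation in-line; it cites Lemmas~3.1 and~3.2 of reference~[L] for the formulas for $\omega_{1},\omega_{2},\omega_{3}$, whereas you propose to derive them directly via the BEST theorem.

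Your one stated worry, however, rests on a slight mis-identification of $|W(f)|$. The version of BEST you invoke, $|W(f)|=\tau(f)\prod_{v}(d^{+}(v)-1)!$, counts Eulerian \emph{circuits} in a digraph with labeled arcs. In Shao et~al.'s setting $W(f)$ counts Eulerian closed walks in $D(f)$ with a distinguished starting arc but with parallel arcs undistinguished; for that count the correct identity is
\[
|W(f)|=\frac{|E(D(f))|}{b(f)}\,\tau(f)\prod_{v\in V(f)}(d^{+}(v)-1)!=\frac{d(k-1)}{b(f)}\,\tau(f)\prod_{v\in V(f)}(d^{+}(v)-1)!,
\]
since $D(f)$ has exactly $d(k-1)$ arcs. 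Substituting this into $\dfrac{b(f)}{c(f)}|W(f)|$ the factor $b(f)$ cancels \emph{pointwise} and one obtains $\dfrac{d(k-1)\,\tau(f)}{\prod_{v}d^{+}(v)}$ for every single $f$. So there is no aggregation over the ordering constraint or over the choice of loop positions to perform: the constant $d(k-1)$ is already present term-by-term, and the global prefactor $(k-1)^{n-1}$ from Lemma~\ref{Shao} then yields $d(k-1)^{n}$. With this correction your outline goes through cleanly and matches the paper.
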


\begin{proof}
Let $\mathcal{A}_{\alpha}(\mathcal{H})$ be the $\mathcal{A}_{\alpha}$-tensor of $\mathcal{H}$. From Lemma \ref{Shao}, the $d$-th order $\mathcal{A}_{\alpha}$-spectral moment of $\mathcal{H}$
$$\textnormal{Tr}_{d}(\mathcal{A}_{\alpha}(\mathcal{H}))=(k-1)^{n-1}\sum_{f\in \mathcal{F}_{d}}\frac{b(f)}{c(f)}\pi_{f}(\mathcal{A}_{\alpha}(\mathcal{H}))|W(f)|.$$

Since $\mathcal{A}_{\alpha}(\mathcal{H})=\alpha\mathcal{D}(\mathcal{H})+(1-\alpha)\mathcal{A}(\mathcal{H})$ is a linear combination of degree tensor and adjacency tensor of $\mathcal{H}$,
we know that  $\pi_f(\mathcal{A}_{\alpha}(\mathcal{H}))\ne0$ if and only if for all $j\in[d]$ either $i_j\alpha_j=i_ji_j\cdots i_j$ or $i_j\alpha_j\in E(\mathcal{H})$ for $f=(i_1\alpha_1,\ldots,i_d\alpha_d)\in\mathcal{F}_{d}$.
There are only three cases for the elements of $\mathcal{F}_d$ such that $\pi_{f}(\mathcal{A}_{\alpha}(\mathcal{H}))\ne0$.
Let $\mathcal{F}_{d}^{*}=\{f\in\mathcal{F}_{d}|\ \pi_{f}(\mathcal{A}_{\alpha}(\mathcal{H}))\ne0\}=\mathcal{F}_{d}^{(1)}\cup\mathcal{F}_{d}^{(2)}\cup\mathcal{F}_{d}^{(3)}$, where $\mathcal{F}_{d}^{(i)}~(i=1,2,3)$ are shown as follows.\\
(1) $\mathcal{F}_{d}^{(1)}=\{(i_{1}\alpha_{1},\ldots,i_{d}\alpha_{d})|\ i_j\alpha_{j}=i_ji_j\cdots i_j\ \textnormal{for}\ j\in[d]\}$,\\
(2) $\mathcal{F}_{d}^{(2)}=\{(i_{1}\alpha_{1},\ldots, i_{d}\alpha_{d})|\ i_j\alpha_j\in E(\mathcal{H})\ \textnormal{for}\ j\in[d]\}$,\\
(3) $\mathcal{F}_{d}^{(3)}=\{(i_{1}\alpha_{1},\ldots,i_{d}\alpha_{d})|\  \textnormal{there exists}\ S\subset[d]\ \textnormal{such that}\ i_s\alpha_s=i_si_s\cdots i_s\ \textnormal{for}\ s\in S,\ i_t\alpha_t\in E(\mathcal{H})\ \textnormal{for}\ t\notin S\}$.

By the definitions of $\mathcal{F}_{d}^{(i)}~(i=1,2,3)$, it is obvious that $\mathcal{F}_{d}^{(j)}\cap\mathcal{F}_{d}^{(l)}=\emptyset$ for $1\le j<l\le3$.
Let $$\omega_i=\sum\limits_{f\in\mathcal{F}_{d}^{(i)}}\frac{b(f)}{c(f)}\pi_{f}(\mathcal{A}_{\alpha}(\mathcal{H}))|W(f)|,\ i=1,2,3.$$
Then
\begin{equation}\label{laplacian3terms}
\textnormal{Tr}_{d}(\mathcal{A}_{\alpha}(\mathcal{H}))=(k-1)^{n-1}\sum\limits_{i=1}^{3}\omega_i.
\end{equation}

Next, we consider $f=(i_1\alpha_1,\ldots,i_d\alpha_d)\in\mathcal{F}_{d}^{*}$ for which $|W(f)|\neq0$.
From the proofs of Lemma 3.1 and Lemma 3.2 in \cite{L}, $\{f\in\mathcal{F}_{d}^{(i)}|~ |W(f)|\ne0\}$ and $\omega_i$ are shown as follows ($i=1,2,3$).\\
(1) $\{f\in\mathcal{F}_{d}^{(1)}|~ |W(f)|\ne0\}=\{(vv\cdots v,\ldots,vv\cdots v)|~ v\in[n]\}$, then
\begin{equation}\label{w1}
\omega_{1}=\alpha^{d}\sum\limits_{v=1}^{n}d_{v}^{d}=(k-1)^{1-n}\phi(d).
\end{equation}
(2) $\{f\in\mathcal{F}_{d}^{(2)}|~ |W(f)|\ne0\}=\bigcup_{H\in\mathcal{V}_{d}^{*}(\mathcal{H})}\mathcal{F}_d(H)$, where $\mathcal{V}_{d}^{*}(\mathcal{H})$ is the set of all connected Veblen infragraphs of $\mathcal{H}$ with $d$ hyperedges, then
\begin{equation}\label{w2}
\omega_2=(k-1)^{1-n}(1-\alpha)^{d}\textnormal{Tr}_{d}(\mathcal{A}_{\mathcal{H}}).
\end{equation}
(3) $\{f\in\mathcal{F}_{d}^{(3)}|~ |W(f)|\ne0\}=\bigcup_{H\in\mathcal{V}_{d-1}(\mathcal{H})}\mathcal{F}_d(H)$, then
\begin{equation}\label{w3}
\omega_3=d(k-1)\sum\limits_{H\in\mathcal{V}_{d-1}(\mathcal{H})}\sum\limits_{f\in\mathcal{F}_{d}(H)}\frac{\tau(f)\pi_{f}(\mathcal{A}_{\alpha}(\mathcal{H}))}{\prod\limits_{v\in V(f)}d^{+}(v)}.
\end{equation}

Substituting Eqs. (\ref{w1}), (\ref{w2}) and (\ref{w3}) into Eq. (\ref{laplacian3terms}), the expression for the $d$-th order $\mathcal{A}_{\alpha}$-spectral moment of $\mathcal{H}$ can be obtained directly.
\end{proof}

From Theorem \ref{T}, we can obtain the expressions of the first $k+1$ orders $\mathcal{A}_{\alpha}$-spectral moments of a $k$-uniform hypergraph.
Let $\mathcal{K}_{k+1}$ be the set of all complete $k$-uniform subhypergraphs with $k+1$ hyperedges of a $k$-uniform hypergraph $\mathcal{H}$.

\begin{cor}\label{firstk+1}
Let $k\ge2$ and $\mathcal{H}$ be a $k$-uniform hypergraph with degree sequence $d_1,d_2,\ldots,d_n$. Then
\begin{align*}
\mathrm{Tr}_d(\mathcal{A}_{\alpha}(\mathcal{H}))=
\begin{cases}
\phi(d),  ~ \quad  d=1,\ldots,k-1,\\
\phi(k)+(k-1)^{n-k}k^{k-1}(1-\alpha)^{k}|E(\mathcal{H})|, ~ \quad  d=k,\\
\phi(k+1)+(k+1)(k-1)^{n-k}\bigg( k^{k-2} \alpha (1-\alpha)^{k} \sum\limits_{i=1}^{n}d_i^2 & \\
~ \quad \quad \quad \quad +(1-\alpha)^{k+1}C_k|\mathcal{K}_{k+1}| \bigg), ~ \quad  d=k+1,
\end{cases}
\end{align*}
where $C_k$ is a constant depending only on $k$.
\end{cor}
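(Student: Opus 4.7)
The plan is to invoke Theorem~\ref{T} directly and reduce the corollary to evaluating the adjacency traces $\mathrm{Tr}_k(\mathcal{A}(\mathcal{H}))$ and $\mathrm{Tr}_{k+1}(\mathcal{A}(\mathcal{H}))$, together with the mixed contribution (third summand) at $d=k+1$. Everything rests on one structural fact, which I would establish first: every connected $k$-uniform Veblen multi-hypergraph has at least $k$ hyperedges, and the unique such object with exactly $k$ hyperedges is $k$ copies of a single hyperedge. The first part follows from degree counting (at least $k$ vertices, each of positive degree divisible by $k$, forces at least $kn/k = n \ge k$ hyperedges); uniqueness forces all vertices to have degree exactly $k$ and the multi-hypergraph to consist of a single repeated edge.

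For $d=1,\ldots,k-1$, this structural fact immediately gives $\mathcal{V}_{d-1}(\mathcal{H})=\emptyset$, so the mixed term of Theorem~\ref{T} vanishes, and $\mathrm{Tr}_d(\mathcal{A}(\mathcal{H}))=0$ since the ``all-edge'' contribution $\omega_2$ in the proof of Theorem~\ref{T} is indexed by Veblen infragraphs with $d$ hyperedges, of which there are none. Hence $\mathrm{Tr}_d(\mathcal{A}_{\alpha}(\mathcal{H}))=\phi(d)$. At $d=k$, the mixed term still vanishes (as $\mathcal{V}_{k-1}(\mathcal{H})=\emptyset$), and the only remaining task is to compute $\mathrm{Tr}_k(\mathcal{A}(\mathcal{H}))$. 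Applying Lemma~\ref{Shao} to the family of ``$k$ copies of $e$'' infragraphs (one for each $e\in E(\mathcal{H})$), with $b(f)=1$, $c(f)=((k-1)!)^k$, $\pi_f(\mathcal{A}(\mathcal{H}))=((k-1)!)^{-k}$ and $|W(f)|$ equal to the Euler-tour count of the bidirected $K_k$, yields the standard identity $\mathrm{Tr}_k(\mathcal{A}(\mathcal{H}))=k^{k-1}(k-1)^{n-k}|E(\mathcal{H})|$. Multiplying by $(1-\alpha)^k$ and adding $\phi(k)$ gives the $d=k$ formula.

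For $d=k+1$ both non-diagonal summands contribute. The pure-adjacency part requires $\mathrm{Tr}_{k+1}(\mathcal{A}(\mathcal{H}))$; a degree-balance argument rules out any repeated edge (a multiplicity $\ge 2$ forces the remaining $\le k-1$ edges to repair the parities, which is impossible), so the only connected Veblen infragraph with $k+1$ hyperedges is the simple hypergraph $K^{(k)}_{k+1}$. Summing Shao's weights over the corresponding $f$'s produces $\mathrm{Tr}_{k+1}(\mathcal{A}(\mathcal{H}))=(k+1)(k-1)^{n-k}C_k|\mathcal{K}_{k+1}|$ for an explicit constant $C_k$ depending only on $k$. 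For the mixed term, the structural fact forces every $H\in\mathcal{V}_k(\mathcal{H})$ contributing nontrivially to consist of $k$ copies of some $e\in E(\mathcal{H})$, and each $f\in\mathcal{F}_{k+1}(H)$ has exactly one diagonal entry $vv\cdots v$ with $v\in e$ and $k$ edge entries of $e$. Evaluating $\pi_f(\mathcal{A}_{\alpha}(\mathcal{H}))=\alpha\, d_v\,(1-\alpha)^k/((k-1)!)^k$, multiplying by the combinatorial factor $\tau(f)/\prod_{u\in V(f)}d^+(u)$ (which after $v$ is fixed depends only on $k$), and summing first over all $e\ni v$ (producing a factor $d_v$) then over $v$, yields a total of the form $\alpha(1-\alpha)^k \sum_i d_i^2$ times a $k$-dependent constant. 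Collecting the outer $(k+1)(k-1)^n$ and combining with the residual $(k-1)^{-k}$ from the internal computations gives $(k+1)(k-1)^{n-k}k^{k-2}\alpha(1-\alpha)^k \sum_i d_i^2$.

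The main obstacle is the bookkeeping of the mixed term at $d=k+1$: one must carefully (i) parametrize each contributing $f$ by the pair $(e,v\in e)$ together with an admissible position for the diagonal entry in the ordered tuple $i_1\le\cdots\le i_{k+1}$, (ii) compute $\tau(f)$ as the number of rooted spanning arborescences of the multi-digraph $D(f)$ (obtained from bidirected $K_k$ with $k-1$ added loops at $v$), and (iii) verify the cancellation of the configuration-dependent factors so that the residual constant is exactly $k^{k-2}$. Once this numerical collapse is established, the three formulas of the corollary follow by substitution into Theorem~\ref{T}.
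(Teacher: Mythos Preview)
The paper does not actually prove this corollary; it simply asserts that the formulas follow from Theorem~\ref{T}. Your proposal correctly supplies the omitted argument, and it is precisely the approach the paper itself carries out explicitly one step later in the proof of Theorem~\ref{k+2}: identify the minimal connected Veblen infragraphs ($k$ copies of a single edge, then $K^{(k)}_{k+1}$), compute $\pi_f$, $\tau(f)=k^{k-2}$, and $\prod_v d^+(v)$ for each isomorphism class of $D(f)$, and use $\sum_{e}\sum_{i\in e} d_i=\sum_i d_i^2$ to collapse the mixed term.
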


Next, the expression of the $k+2$-nd order $\mathcal{A}_{\alpha}$-spectral moment of a $k$-uniform hypergraph is given.

\begin{thm}\label{k+2}
Let $k\ge2$ and $\mathcal{H}$ be a $k$-uniform hypergraph with degree sequence $d_1,d_2,\ldots,d_n$. Then
\begin{align*}
\mathrm{Tr}_{k+2}(\mathcal{A}_{\alpha}(\mathcal{H}))=&\phi(k+2)+(1-\alpha)^{k+2}\mathrm{Tr}_{k+2}(\mathcal{A}(\mathcal{H}))\\
+&(k+2)(k-1)^{n-k}k^{k-2}\alpha^{2}(1-\alpha)^{k}\sum\limits_{e\in E(\mathcal{H})}\left( \sum\limits_{\{i,j\}\subset e}d_id_j+\sum\limits_{i\in e}d_i^2 \right)\\
+&(k+2)(k-1)^{n-k-1}\alpha(1-\alpha)^{k+1}\sum\limits_{\mathcal{G}\in \mathcal{K}_{k+1}}\left( \sum\limits_{D\in R(\mathcal{G})}\tau(D) \right)\left( \sum\limits_{i\in V(\mathcal{G})}d_i \right),
\end{align*}
where $R(\mathcal{G})=\{D|~ D\cong D_f,~ f\in\mathcal{F}_{k+2}(\mathcal{G})\}$, $\tau(D)$ is the number of rooted spanning trees of the multi-digraph $D$ with a specified root.
\end{thm}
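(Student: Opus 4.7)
The plan is to specialize Theorem \ref{T} to $d=k+2$, which immediately gives $\mathrm{Tr}_{k+2}(\mathcal{A}_{\alpha}(\mathcal{H}))=\phi(k+2)+(1-\alpha)^{k+2}\mathrm{Tr}_{k+2}(\mathcal{A}(\mathcal{H}))+(k+2)(k-1)^{n}\sum_{H\in\mathcal{V}_{k+1}(\mathcal{H})}\sum_{f\in\mathcal{F}_{k+2}(H)}\frac{\tau(f)\pi_{f}(\mathcal{A}_{\alpha}(\mathcal{H}))}{\prod_{v\in V(f)}d^{+}(v)}$. The first two terms already match the first two summands in the claimed formula, so all the work lies in the third sum. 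The goal is to show that, after enumerating $\mathcal{V}_{k+1}(\mathcal{H})$, this sum splits cleanly into two pieces reproducing the last two terms.

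First I would classify the connected $k$-uniform Veblen multi-hypergraphs $H$ with at most $k+1$ hyperedges. Since every vertex of a Veblen hypergraph has degree a positive multiple of $k$, a hypergraph with $m$ hyperedges satisfies $mk=\sum_{v} d_{v}\ge k|V(H)|$, so $|V(H)|\le m$. Running through the cases $m\le k-1$, $m=k$, and $m=k+1$ (and checking that the only connected $k$-uniform configuration on at most $k+1$ vertices using $k+1$ edges with every degree a multiple of $k$ is the complete hypergraph $K_{k+1}^{(k)}$), I would conclude that $\mathcal{V}_{k+1}(\mathcal{H})$ consists of exactly two families: (i) the multi-hypergraph $H_{e}$ obtained from $k$ copies of a single hyperedge $e\in E(\mathcal{H})$, and (ii) the simple subhypergraphs $\mathcal{G}\in\mathcal{K}_{k+1}$.

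Next I would evaluate the inner sum on each family. For $f\in\mathcal{F}_{k+2}(H)$ with $H$ having $m$ hyperedges, exactly $m$ positions of $f$ encode hyperedges of $H$ (each contributing $(1-\alpha)/(k-1)!$ to $\pi_{f}(\mathcal{A}_{\alpha}(\mathcal{H}))$), while the remaining $k+2-m$ positions are diagonal entries $vv\cdots v$ with $v\in V(H)$ (each contributing $\alpha d_{v}$). For family (i) this yields two diagonal entries whose vertices can coincide or differ inside $e$, producing the $\alpha^{2}(1-\alpha)^{k}\bigl(\sum_{\{i,j\}\subset e}d_{i}d_{j}+\sum_{i\in e}d_{i}^{2}\bigr)$ pattern and, after tallying outdegrees and the Euler–tour/rooted-tree weight coming from $k$ parallel copies of $e$, a clean constant of the form $k^{k-2}$; combined with the global $(k-1)^{n}$ and the factor $(k-1)^{-k}$ from $\prod_{v\in V(f)}d^{+}(v)$ cancelling part of the prefactor, this gives the stated $(k-1)^{n-k}k^{k-2}$ coefficient. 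For family (ii) the single diagonal entry must sit at some $v\in V(\mathcal{G})$, whence the degree sum $\sum_{i\in V(\mathcal{G})}d_{i}$ appears, and the Euler-tour contributions of the remaining $k+1$ edge entries collapse to $\sum_{D\in R(\mathcal{G})}\tau(D)$.

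The main obstacle is the combinatorial bookkeeping for family (i). Because $H_{e}$ has $k$ parallel copies of the same hyperedge, the multi-digraph $D(f)$ carries high-multiplicity arcs, so the factors $b(f)$, $c(f)$, $\tau(f)$ and $\prod_{v}d^{+}(v)$ that enter the Shao et al.\ formula all need to be tracked as functions of where the two diagonal positions are placed within the $k+2$ slots and at which vertices of $e$. I would handle this by grouping configurations according to the unordered pair of diagonal vertices $\{v_{a},v_{b}\}\subseteq e$, then applying a Cayley-type count (as in the proofs of Lemmas~3.1 and 3.2 of \cite{L}) to evaluate $\sum_{f}\tau(f)/\prod_{v}d^{+}(v)$ within each group; the coincident case $v_{a}=v_{b}$ yields the $\sum_{i\in e}d_{i}^{2}$ contribution while the distinct case yields $\sum_{\{i,j\}\subset e}d_{i}d_{j}$. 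Once these two evaluations are combined and the prefactor $(k+2)(k-1)^{n}$ is absorbed, the formula in the statement follows.
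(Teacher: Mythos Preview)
Your proposal is correct and follows essentially the same approach as the paper: specialize Theorem~\ref{T} to $d=k+2$, identify $\mathcal{V}_{k+1}(\mathcal{H})$ as the $k$-fold edges $\{\mathcal{E}_e\}$ together with the complete sub-hypergraphs $\mathcal{K}_{k+1}$, split the $\mathcal{E}_e$ contribution according to whether the two diagonal positions sit at the same vertex or at distinct vertices of $e$, and use $\tau(f)=k^{k-2}$ via Cayley's formula for the loop-removed complete digraph. The paper carries out exactly this computation, filling in the explicit counts $3((k-1)!)^k$ and $4((k-1)!)^k$ and the outdegree products $3(k-1)^k$ and $4(k-1)^k$ that you flagged as the main bookkeeping obstacle.
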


\begin{proof}
From Theorem \ref{T}, we have
\begin{equation}\label{k++2}
\begin{aligned}
\mathrm{Tr}_{k+2}(\mathcal{A}_{\alpha}(\mathcal{H}))=&\phi(k+2)+(1-\alpha)^{k+2}\mathrm{Tr}_{k+2}(\mathcal{A}(\mathcal{H}))\\
&+(k+2)(k-1)^{n}\sum\limits_{H\in\mathcal{V}_{k+1}(\mathcal{H})}\sum\limits_{f\in\mathcal{F}_{k+2}(H)}\frac{\tau(f)\pi_{f}(\mathcal{A}_{\alpha}(\mathcal{H}))}{\prod\limits_{v\in V(f)}d^{+}(v)}.
\end{aligned}
\end{equation}

For $e\in E(\mathcal{H})$, let $\mathcal{E}_{e}$ be the connected Veblen infragraph obtained by adding multiplicity $k$ to the hyperedge $e$.
Then $\mathcal{V}_{k+1}(\mathcal{H})=\{\mathcal{E}_{e}|\ e\in E(\mathcal{H})\}\cup\mathcal{K}_{k+1}$.

For a hyperedge $e=\{i_1,i_2,\ldots,i_k\}\in E(\mathcal{H})$ with $i_1<i_2<\ldots<i_k$, let $\alpha_{j}$ be a permutation of $e\setminus\{i_j\}$ and $\beta_{j}=i_j \cdots i_j\in[n]^{k-1}$ for $j=1,\ldots,k$.
The elements in $\mathcal{F}_{k+2}(\mathcal{E}_e)$ have the following two cases.\\
\textbf{Case 1.}
There are two elements in $f_1$ corresponding to the same diagonal entry $(\mathcal{A}_{\alpha})_{i_j \beta_j}$ of $\mathcal{A}_{\alpha}(\mathcal{H})$, where $j\in[k]$.
Without loss of generality, we assume $$f_1=(i_1\alpha_1,\ldots,i_j\alpha_j,i_j\beta_j,i_j\beta_j,i_{j+1}\alpha_{j+1},\ldots,i_k\alpha_k)\in\mathcal{F}_{k+2}(\mathcal{E}_e).$$
Then $$\pi_{f_1}(\mathcal{A}_{\alpha}(\mathcal{H}))=\alpha^{2}(1-\alpha)^{k} \left( \frac{1}{(k-1)!} \right)^{k}d_{i_j}^{2},~ \prod_{v\in V(f_1)}d_v^{+}=3(k-1)^k.$$
\textbf{Case 2.}
There are two elements in $f_2$ that correspond to the two different diagonal entries $(\mathcal{A}_{\alpha})_{i_j \beta_j}$ and $(\mathcal{A}_{\alpha})_{i_l \beta_l}$ of $\mathcal{A}_{\alpha}(\mathcal{H})$, where $j\ne l\in[k]$.
Without loss of generality, we assume $$f_2=(i_1\alpha_1,\ldots,i_j\alpha_j,i_j\beta_j,i_{j+1}\alpha_{j+1},\ldots,i_l\alpha_l, i_l\beta_l,i_{l+1}\alpha_{l+1},\ldots,i_k\alpha_k)\in\mathcal{F}_{k+2}(\mathcal{E}_e).$$
Then $$\pi_{f_2}(\mathcal{A}_{\alpha}(\mathcal{H}))=\alpha^{2}(1-\alpha)^{k}\left( \frac{1}{(k-1)!} \right)^{k}d_{i_j}d_{i_l},~ \prod_{v\in V(f_2)}d_v^{+}=4(k-1)^k.$$

As is known, the number of rooted spanning trees (with a specified root) of a complete digraph with $k$ vertices is $k^{k-2}$ \cite{Cayley}.
The digraphs obtained by removing all loops of $D_{f_1}$ and $D_{f_2}$ are complete digraphs with $k$ vertices, then $\tau(f_1)=\tau(f_2)=k^{k-2}$.
There are $3((k-1)!)^{k}$ (resp. $4((k-1)!)^{k}$) elements in $\mathcal{F}_{k+2}(\mathcal{E}_e)$ that have the same induced multi-digraph as $f_1$ (resp. $f_2$). Then
\begin{equation}\label{edges}
\begin{aligned}
&\sum\limits_{e\in E(\mathcal{H})}\sum\limits_{f\in\mathcal{F}_{k+2}(\mathcal{E}_{e})}\frac{\tau(f) \pi_{f}(\mathcal{A}_{\alpha}(\mathcal{H}))}{\prod\limits_{v\in V(f)}d_v^{+}}\\
=&(k-1)^{-k}k^{k-2}\alpha^{2}(1-\alpha)^{k}\sum\limits_{e\in E(\mathcal{H})}\left( \sum\limits_{i\in e}d_i^2+\sum\limits_{\{i,j\}\subset e}d_i d_j \right).
\end{aligned}
\end{equation}

For a subhypergraph $\mathcal{G}\in\mathcal{K}_{k+1}$ with the vertex set $V(\mathcal{G})=\{i_1,i_2,\ldots,i_{k+1}\}$ and the hyperedge set $E(\mathcal{G})=\{e_1,e_2,\ldots,e_{k+1}|~ i_j\in e_j,~ j=1,2,\ldots,k+1\}$, where $i_1<i_2<\ldots<i_{k+1}$, let $\alpha_{j}$ be a permutation of $e_j\setminus\{i_j\}$ and $\beta_{j}=i_{j}\cdots i_{j}\in[N]^{k-1}$ for $j=1,\dots,k+1$. The elements in $\mathcal{F}_{k+2}(\mathcal{G})$ have the only one case.
There is one element in $f$ corresponding to a diagonal entry of $\mathcal{A}_{\alpha}(\mathcal{H})$.
Without loss of generality, we assume $$f=(i_1\alpha_1,\ldots,i_j\alpha_j,i_j\beta_j,i_{j+1}\alpha_{j+1},\ldots,i_{k+1}\alpha_{k+1})\in\mathcal{F}_{k+2}(\mathcal{G}).$$
Then $$\pi_{f}(\mathcal{A}_{\alpha}(\mathcal{H}))=\alpha(1-\alpha)^{k+1}\left( \frac{1}{(k-1)!} \right)^{k+1}d_{i_j},~ \prod_{v\in V(f)}d_v^{+}=2(k-1)^{k+1}.$$
Since the number of spanning trees of a multi-digraph is irrelevant of the loops, we know that $\sum_{f\in\mathcal{F}_{k+2}(\mathcal{G})}\tau(f)=\sum_{D\in R(\mathcal{G})}\tau(D)$. There are $2((k-1)!)^{k+1}$ elements in $\mathcal{F}_{k+2}(\mathcal{G})$ that have the same induced multi-digraph as $f$. Then
\begin{equation}\label{simplices}
\begin{aligned}
&\sum\limits_{\mathcal{G}\in\mathcal{K}_{k+1}}\sum\limits_{f\in\mathcal{F}_{k+2}(\mathcal{G})}\frac{\tau(f) \pi_{f}(\mathcal{A}_{\alpha}(\mathcal{H}))}{\prod\limits_{v\in V(f)}d_v^{+}}\\
=&(k-1)^{-k-1}\alpha(1-\alpha)^{k+1}\sum\limits_{\mathcal{G}\in \mathcal{K}_{k+1}}\left( \sum\limits_{i\in V(\mathcal{G})}d_i \left( \sum\limits_{D\in R(\mathcal{G})}\tau(D) \right) \right).
\end{aligned}
\end{equation}

Substituting Eqs. (\ref{edges}) and (\ref{simplices}) into Eq. (\ref{k++2}), the expression of $\mathrm{Tr}_{k+2}(\mathcal{A}_{\alpha}(\mathcal{H}))$ is obtained.
\end{proof}

\section{Some transformations of hypergraphs}
In this section, we introduce some operations of moving hyperedges or hyperpaths on uniform hypergraphs and prove that the $S_{\alpha}$-order of hypergraphs is monotonic on these transformations.

For a $k$-uniform hypergraph $\mathcal{H}$, a vertex with degree one is called a \textit{cored vertex}, a hyperedge is called a \textit{pendant hyperedge} if it contains $k-1$ cored vertices.

\begin{lem}\label{1}
For a positive integer $k\ge2$, let $u$ and $v$ be two vertices of a $k$-uniform hypergraph $\mathcal{H}$ with $d_{\mathcal{H}}(u)<d_{\mathcal{H}}(v)+s$.
And $e_1,\ldots,e_s$ are all pendant hyperedges contains the vertex $u$.
The $k$-uniform hypergraph $\mathcal{H}^{'}$ is obtained by moving all pendant hyperedges $e_1,\ldots,e_s$ from $u$ to $v$, $\mathcal{H}^{'}$ is said to be a \textit{$\sigma$-transformation} of $\mathcal{H}$.
Then $\mathcal{H}\prec_{\alpha}\mathcal{H}^{'}$ for $0<\alpha<1$.
\end{lem}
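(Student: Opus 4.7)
The plan is to locate the first order $d$ at which $\mathrm{Tr}_d(\mathcal{A}_\alpha(\mathcal{H}))$ and $\mathrm{Tr}_d(\mathcal{A}_\alpha(\mathcal{H}'))$ disagree and show that at that order the trace for $\mathcal{H}'$ is the larger one. A key observation is that the $\sigma$-transformation preserves the vertex set and the edge count (pendant hyperedges are merely relocated, never created or destroyed), and it alters only the two degrees at $u$ and $v$, sending them to $d_\mathcal{H}(u)-s$ and $d_\mathcal{H}(v)+s$ respectively. In particular $\sum_i d_i$ is preserved, which forces $\mathrm{Tr}_0$ and $\mathrm{Tr}_1$ to coincide on $\mathcal{H}$ and $\mathcal{H}'$.

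The main step is to evaluate $\mathrm{Tr}_2$ via Corollary \ref{firstk+1}. For $k\geq 3$ the corollary gives $\mathrm{Tr}_2(\mathcal{A}_\alpha(\cdot))=\phi(2)$, while for $k=2$ it gives $\phi(2)$ plus an edge-count term $2(1-\alpha)^2|E|$ that cancels in the comparison since $|E(\mathcal{H})|=|E(\mathcal{H}')|$. In either case the comparison reduces to the elementary identity
$$\bigl((d_\mathcal{H}(u)-s)^2+(d_\mathcal{H}(v)+s)^2\bigr)-\bigl(d_\mathcal{H}(u)^2+d_\mathcal{H}(v)^2\bigr)=2s\bigl(d_\mathcal{H}(v)+s-d_\mathcal{H}(u)\bigr),$$
which is strictly positive by the hypothesis $d_\mathcal{H}(u)<d_\mathcal{H}(v)+s$ together with $s\geq 1$ and $\alpha>0$. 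Multiplying by the positive factor $(k-1)^{n-1}\alpha^2$ yields $\mathrm{Tr}_2(\mathcal{A}_\alpha(\mathcal{H}'))>\mathrm{Tr}_2(\mathcal{A}_\alpha(\mathcal{H}))$, so by the definition of the $S_\alpha$-order we conclude $\mathcal{H}\prec_\alpha\mathcal{H}'$.

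There is no serious obstacle here: because the first discrepancy surfaces already at order $d=2$, the genuinely complicated Veblen-infragraph sum in Theorem \ref{T} never needs to be touched. The only mild care required is to separate the cases $k=2$ and $k\geq 3$ when reading off $\mathrm{Tr}_2$ from Corollary \ref{firstk+1}, and to note that the single numerical hypothesis $d_\mathcal{H}(u)<d_\mathcal{H}(v)+s$ is exactly what is needed to force the quadratic degree shift on the right-hand side above to be positive.
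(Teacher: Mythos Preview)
Your proposal is correct and follows essentially the same approach as the paper: both arguments verify that $\mathrm{Tr}_0$ and $\mathrm{Tr}_1$ agree, then use Corollary~\ref{firstk+1} to reduce the comparison of $\mathrm{Tr}_2$ to the quadratic degree identity $2s(d_{\mathcal{H}}(v)+s-d_{\mathcal{H}}(u))>0$. You are in fact slightly more careful than the paper in separating the cases $k=2$ and $k\ge 3$ and in noting explicitly that the edge-count term cancels when $k=2$.
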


\begin{proof}
If $0<\alpha<1$, then $\mathrm{Tr}_{t}(\mathcal{A}_{\alpha}(\mathcal{H}))=\mathrm{Tr}_{t}(\mathcal{A}_{\alpha}(\mathcal{H}^{'}))$ for $t=0,1$.
And
\begin{align*}
\sum_{v\in V(\mathcal{H})}d_{\mathcal{H}}(v)^2-\sum_{v\in V(\mathcal{H}^{'})}d_{\mathcal{H}^{'}}(v)^2
=&d_{\mathcal{H}}(u)^2+d_{\mathcal{H}}(v)^2-(d_{\mathcal{H}}(u)-s)^2-(d_{\mathcal{H}}(v)+s)^2\\
=&2s(d_{\mathcal{H}}(u)-d_{\mathcal{H}}(v)-s)<0.
\end{align*}
From Corollary \ref{firstk+1}, we know that $\mathrm{Tr}_{2}(\mathcal{A}_{\alpha}(\mathcal{H}))-\mathrm{Tr}_{2}(\mathcal{A}_{\alpha}(\mathcal{H}^{'}))<0$ for $0<\alpha<1$.
Then $\mathcal{H}\prec_{\alpha}\mathcal{H}^{'}$.
\end{proof}

Next, we introduce three transformations of silding hyperpath on uniform hypergraphs, and show that the transformed hypergraph comes before the original hypergraph in $S_{\alpha}$-order.

\begin{lem}\label{3}
Let $P^{(k)}$ be a $k$-uniform hyperpath.
The $k$-uniform hypergraph $\mathcal{H}$ is the coalescence of the hyperpath $P^{(k)}$ at one of its non-cored vertices with a $k$-uniform hypergraph $\mathcal{H}_0$ at its vertex $v_0$.
The $k$-uniform hypergraph $\mathcal{H}_{1}$ is obtained by moving $\mathcal{H}_0$ to a cored vertex of $P^{(k)}$, and $\mathcal{H}_1$ is said to be \textit{the first path-sliding transformation} of $\mathcal{H}$.
Then $\mathcal{H}_1\prec_{\alpha}\mathcal{H}$ for $k\ge3$ and $0<\alpha<1$.
\end{lem}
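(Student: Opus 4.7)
The plan is to separate $\mathcal{H}_1$ and $\mathcal{H}$ via their first three $\mathcal{A}_{\alpha}$-spectral moments and invoke Corollary \ref{firstk+1}. Both hypergraphs share the same vertex-set size $n$ and the same collection of hyperedges (the slide neither creates nor destroys any edge), hence the same total degree. Consequently $\mathrm{Tr}_{0}(\mathcal{A}_{\alpha})$ and $\mathrm{Tr}_{1}(\mathcal{A}_{\alpha})=\phi(1)$ already agree for $\mathcal{H}$ and $\mathcal{H}_1$. The remaining task is to show the strict inequality $\mathrm{Tr}_{2}(\mathcal{A}_{\alpha}(\mathcal{H}_1))<\mathrm{Tr}_{2}(\mathcal{A}_{\alpha}(\mathcal{H}))$, which then produces $\mathcal{H}_1\prec_{\alpha}\mathcal{H}$ directly from the definition of the $S_{\alpha}$-order.

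The decisive computation is a careful bookkeeping of how the degree sequence changes under the slide. Let $u$ be the non-cored vertex of $P^{(k)}$ at which $\mathcal{H}_0$ is attached in $\mathcal{H}$ and let $w$ be the cored vertex of $P^{(k)}$ at which $\mathcal{H}_0$ is attached in $\mathcal{H}_1$; set $d_0=d_{\mathcal{H}_0}(v_0)$, and note that we may assume $d_0\ge 1$ since otherwise $\mathcal{H}_1\cong\mathcal{H}$. In $\mathcal{H}$ the glued vertex $u=v_0$ has degree $d_0+2$ and $w$ has degree $1$; in $\mathcal{H}_1$ the glued vertex $w=v_0$ has degree $d_0+1$ while $u$, no longer a coalescence point, reverts to its pure path-degree $2$. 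All remaining vertex degrees coincide, so
$$\sum_{v\in V(\mathcal{H})}d_{\mathcal{H}}(v)^{2}-\sum_{v\in V(\mathcal{H}_1)}d_{\mathcal{H}_1}(v)^{2}=\bigl[(d_0+2)^{2}+1^{2}\bigr]-\bigl[(d_0+1)^{2}+2^{2}\bigr]=2d_0>0.$$

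Finally, because $k\ge 3$ the index $d=2$ lies in the range $1,\ldots,k-1$, so Corollary \ref{firstk+1} gives $\mathrm{Tr}_{2}(\mathcal{A}_{\alpha}(\mathcal{G}))=\phi(2)=(k-1)^{n-1}\alpha^{2}\sum_{v}d_{\mathcal{G}}(v)^{2}$ for both $\mathcal{G}=\mathcal{H}$ and $\mathcal{G}=\mathcal{H}_{1}$, with no extra edge-count or hyperclique correction. Combined with $0<\alpha<1$, the sum-of-squares inequality above gives $\mathrm{Tr}_{2}(\mathcal{A}_{\alpha}(\mathcal{H}_{1}))<\mathrm{Tr}_{2}(\mathcal{A}_{\alpha}(\mathcal{H}))$, which closes the argument. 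I do not foresee a real obstacle; the only delicate point is verifying that $u$ really returns to path-degree $2$ after the slide, which is where the hypothesis that $u$ is a (degree-$2$) non-cored vertex of $P^{(k)}$ is used, while the hypothesis $k\ge 3$ is exactly what keeps the $d=k$ correction in Corollary \ref{firstk+1} from polluting $\mathrm{Tr}_{2}$. The whole scheme is parallel to the $\sigma$-transformation argument in Lemma \ref{1}.
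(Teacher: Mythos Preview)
Your proposal is correct and follows essentially the same approach as the paper: both arguments verify that $\mathrm{Tr}_0$ and $\mathrm{Tr}_1$ agree, then compute the degree-square difference $2d_0>0$ and invoke Corollary~\ref{firstk+1} (with $k\ge 3$ ensuring $d=2\le k-1$) to obtain $\mathrm{Tr}_2(\mathcal{A}_{\alpha}(\mathcal{H}_1))<\mathrm{Tr}_2(\mathcal{A}_{\alpha}(\mathcal{H}))$. Your write-up is slightly more explicit about why the hypothesis $k\ge 3$ is needed and about the harmless case $d_0=0$, but the substance is identical.
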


\begin{proof}
If $0<\alpha<1$, then $\mathrm{Tr}_{t}(\mathcal{A}_{\alpha}(\mathcal{H}_1))=\mathrm{Tr}_{t}(\mathcal{A}_{\alpha}(\mathcal{H}))$ for $t=0,1$.
It is obvious that $d_{\mathcal{H}}(v_0)=2+d_{\mathcal{H}_0}(v_0)>2$.
And
\begin{align*}
\sum_{v\in V(\mathcal{H}_1)}d_{\mathcal{H}_1}(v)^2-\sum_{v\in V(\mathcal{H})}d_{\mathcal{H}}(v)^2
&=2^2+(d_{\mathcal{H}}(v_0)-1)^2-1-d_{\mathcal{H}}(v_0)^2\\
&=2\left( 2-d_{\mathcal{H}}(v_0) \right)<0.
\end{align*}
From Corollary \ref{firstk+1}, we know that $\mathrm{Tr}_{2}(\mathcal{A}_{\alpha}(\mathcal{H}_1))-\mathrm{Tr}_{2}(\mathcal{A}_{\alpha}(\mathcal{H}))<0$ for $0<\alpha<1$.
Thus, $\mathcal{H}_1\prec_{\alpha}\mathcal{H}$.
\end{proof}

\begin{lem}\label{4}
Let $P_{r}^{(k)}$ be a $k$-uniform hyperpath with the hyperedge set $\{e_1^{(k)},\ldots,e_{r}^{(k)}\}$.
The $k$-uniform hypergraph $\mathcal{H}$ is the coalescence of the hyperpath $P_{r}^{(k)}$ at a cored vertex on the hyperedge $e_{l}^{(k)}$ with a $k$-uniform hypergraph $\mathcal{H}_0$ at its vertex $v_0$, where $2\le l\le \lceil\frac{r}{2}\rceil$ and $r\ge3$.
The $k$-uniform hypergraph $\mathcal{H}_{2}$ is obtained by moving $\mathcal{H}_0$ to a cored vertex on $e_{l-1}^{(k)}$, and $\mathcal{H}_2$ is said to be \textit{the second path-sliding transformation} of $\mathcal{H}$.
Then $\mathcal{H}_2\prec_{\alpha}\mathcal{H}$ for $k\ge3$ and $0<\alpha<1$.
\end{lem}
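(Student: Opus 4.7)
The plan is to compare $\mathcal{A}_{\alpha}$-spectral moments of $\mathcal{H}$ and $\mathcal{H}_{2}$ order by order via Corollary \ref{firstk+1} and Theorem \ref{k+2}, locating the first order at which they differ and determining the sign. Because the transformation only relocates $\mathcal{H}_{0}$ between two cored vertices of $P_{r}^{(k)}$, the two hypergraphs share the same number of vertices, the same number of hyperedges, the same multiset of vertex degrees, and (since $k\ge 3$ prevents any local creation of a $(k+1)$-hyperedge complete subhypergraph near the attachment) the same set $\mathcal{K}_{k+1}$. Consequently Corollary \ref{firstk+1} yields $\mathrm{Tr}_{t}(\mathcal{A}_{\alpha}(\mathcal{H}))=\mathrm{Tr}_{t}(\mathcal{A}_{\alpha}(\mathcal{H}_{2}))$ for $t=0,1,\ldots,k+1$, so the analysis must be pushed to order $t=k+2$.

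Applying Theorem \ref{k+2} at order $k+2$ and using the identity $\sum_{e\in E}\sum_{i\in e} d_{i}^{2}=\sum_{i} d_{i}^{3}$, every summand depending only on the degree sequence cancels, leaving
\[
\mathrm{Tr}_{k+2}(\mathcal{A}_{\alpha}(\mathcal{H}))-\mathrm{Tr}_{k+2}(\mathcal{A}_{\alpha}(\mathcal{H}_{2}))=(1-\alpha)^{k+2}\Delta_{\mathrm{adj}}+C\,\alpha^{2}(1-\alpha)^{k}\,\Delta_{\mathrm{pair}},
\]
where $C>0$ is the constant from Theorem \ref{k+2}, $\Delta_{\mathrm{adj}}=\mathrm{Tr}_{k+2}(\mathcal{A}(\mathcal{H}))-\mathrm{Tr}_{k+2}(\mathcal{A}(\mathcal{H}_{2}))$, and $\Delta_{\mathrm{pair}}$ is the corresponding difference in $\sum_{e\in E}\sum_{\{i,j\}\subset e} d_{i}d_{j}$. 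A direct local count (only the hyperedges $e_{l}$ and $e_{l-1}$ contribute, since $p\in e_{l}$ and $q\in e_{l-1}$ are the only vertices whose degrees change) gives $\Delta_{\mathrm{pair}}=d_{\mathcal{H}_{0}}(v_{0})(S_{l}-S_{l-1})$, where $S_{j}$ denotes the degree-sum of the non-attachment vertices of $e_{j}$. This is $d_{\mathcal{H}_{0}}(v_{0})>0$ when $l=2$ (the interior $e_{2}$ has one more degree-$2$ neighbor than the boundary $e_{1}$), and vanishes when $l\ge 3$ by the symmetry of the two interior hyperedges.

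The decisive case is $l\ge 3$, where the strict inequality must come from $\Delta_{\mathrm{adj}}$. I would apply Lemma \ref{Shao} to $\mathcal{A}(\mathcal{H})$ and $\mathcal{A}(\mathcal{H}_{2})$ and construct a multiplicity-preserving injection from the $\mathcal{F}_{k+2}$-tuples $f$ with $\mathcal{H}_{f}\subseteq\mathcal{H}_{2}$ into those with $\mathcal{H}_{f}\subseteq\mathcal{H}$, via the natural hyperedge relabeling that reroutes Euler tours through the relocated attachment. The hypothesis $l\le\lceil r/2\rceil$ is essential: it guarantees that in $\mathcal{H}$ the far side of the attachment carries at least $r-l\ge l-1$ hyperedges, so that closed-walk tuples of length $k+2$ that enter $\mathcal{H}_{0}$ and then extend along the far side produce extra Veblen infragraphs in $\mathcal{H}$ without counterpart in $\mathcal{H}_{2}$. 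The main obstacle is carrying out this injection rigorously while controlling the weights $\tau(f)\pi_{f}(\mathcal{A}_{\alpha}(\mathcal{H}))/\prod_{v}d^{+}(v)$ appearing in the trace formula; in parameter ranges where $k+2$ is too short to ``see'' the asymmetry, the comparison would be promoted to an order slightly larger than $k+2$, but the structural idea remains the same.
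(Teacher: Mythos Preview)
Your setup for $t\le k+1$ and the $l=2$ case is essentially the paper's argument: same degree sequence, same $\mathcal{K}_{k+1}$, and then Theorem~\ref{k+2} isolates the edge-pair sum. One omission even here: you need $\Delta_{\mathrm{adj}}=0$ at order $k+2$ to conclude from $\Delta_{\mathrm{pair}}>0$ alone. The paper handles this by observing that all connected Veblen infragraphs with $k+2$ hyperedges coincide in $\mathcal{H}$ and $\mathcal{H}_{2}$, so $\mathrm{Tr}_{k+2}(\mathcal{A}(\mathcal{H}))=\mathrm{Tr}_{k+2}(\mathcal{A}(\mathcal{H}_{2}))$; you should state and justify this.

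The real gap is the $l\ge 3$ case. Your plan is to extract the strict inequality from $\Delta_{\mathrm{adj}}$ at order $k+2$, but in fact $\Delta_{\mathrm{adj}}=0$ there as well: for $l\ge 3$ one has $\mathcal{V}_{d}(\mathcal{H})=\mathcal{V}_{d}(\mathcal{H}_{2})$ and $\pi_{f}(\mathcal{A}_{\alpha}(\mathcal{H}))=\pi_{f}(\mathcal{A}_{\alpha}(\mathcal{H}_{2}))$ for every $d\le (l+1)k-1$, so \emph{all} traces up to order $(l+1)k-1$ agree, not just the first $k+2$. Your proposed injection between $\mathcal{F}_{k+2}$-tuples would therefore be a bijection, and no asymmetry is visible at that order. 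The hedge ``promote to an order slightly larger than $k+2$'' understates what is needed: the first discrepancy occurs at order $(l+1)k$, which grows with $l$. At that order the paper does not argue via Euler-tour injections; it invokes the Harary--Sachs formula (Lemma~13 of \cite{ref10}) to reduce the adjacency-trace difference to a subgraph count, namely $N_{\mathcal{H}}(P_{l+1}^{(k)})-N_{\mathcal{H}_{2}}(P_{l+1}^{(k)})=d_{\mathcal{H}_{0}}(v_{0})$. The condition $l\le\lceil r/2\rceil$ is used precisely to ensure that $P_{l+1}^{(k)}$ is the \emph{unique} Veblen-type whose count changes at that order, so the sign is clean. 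Your outline does not identify this order, this subgraph, or this counting tool, and the injection sketch as written cannot produce a strict inequality at $k+2$.
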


\begin{proof}
Let $|V(\mathcal{H})|=n$.
After the second path-sliding transformation, $\mathcal{H}$ and $\mathcal{H}_{2}$ have the same degree sequence.
We abbreviate $d_{\mathcal{H}_0}(v_0)$ as $d_0$.

When $l=2$, all complete $k$-uniform subhypergraphs with $k+1$ hyperedges of $\mathcal{H}$ and $\mathcal{H}_2$ both come from $\mathcal{H}_0$.
From Corollary \ref{firstk+1}, $\mathrm{Tr}_t(\mathcal{A}_{\alpha}(\mathcal{H}_2))=\mathrm{Tr}_t(\mathcal{A}_{\alpha}(\mathcal{H}))$ for $t=0,1,\ldots,k+1$.
Since all connected Veblen infragraphs of $\mathcal{H}$ and $\mathcal{H}_2$ with $k+2$ hyperedges are same, from Lemma 13 in \cite{ref10}, $\mathrm{Tr}_{k+2}(\mathcal{A}(\mathcal{H}_2))=\mathrm{Tr}_{k+2}(\mathcal{A}(\mathcal{H}))$.
Let $f_{l}^{(k)}$ denote the hyperedge of $\mathcal{H}_2$ which corresponds to $e_{l}^{(k)}$ in $\mathcal{H}$ for $l=1,\ldots,r$.
From Theorem \ref{k+2}, we have
\begin{align*}
&\mathrm{Tr}_{k+2}(\mathcal{A}_{\alpha}(\mathcal{H}_2))-\mathrm{Tr}_{k+2}(\mathcal{A}_{\alpha}(\mathcal{H}))\\
=& (k+2)(k-1)^{n-k}k^{k-2} \alpha^{2}(1-\alpha)^{k} \Bigg(\sum\limits_{l=1}^{2}\bigg(\sum\limits_{\{i,j\}\subset f^{(k)}_l}d_{\mathcal{H}_2}(i)d_{\mathcal{H}_2}(j)+\sum\limits_{i\in f^{(k)}_l}d_{\mathcal{H}_2}(i)^{2}\bigg) \\&-\sum\limits_{l=1}^{2}\bigg(\sum\limits_{\{i,j\}\subset e^{(k)}_l}d_{\mathcal{H}}(i)d_{\mathcal{H}}(j)+\sum\limits_{i\in e^{(k)}_l}d_{\mathcal{H}}(i)^{2}\bigg)\Bigg) \\
=& (k+2)(k-1)^{n-k}k^{k-2} \alpha^{2}(1-\alpha)^{k} \Bigg( \left( 2\binom{k-2}{2}+8(k-2)+k(d_0+1)+(d_0+1)^2+16 \right) \\&-\left( \binom{k-1}{2}+\binom{k-3}{2}+3(k-1)+5(k-3)+(k+1)(d_0+1)+(d_0+1)^2+16 \right) \Bigg)\\
=&-(k+2)(k-1)^{n-k}k^{k-2}\alpha^{2}(1-\alpha)^{k}d_0<0,
\end{align*}
where $0<\alpha<1$.
Hence, $\mathcal{H}_2\prec_{\alpha}\mathcal{H}$ for $l=2$ and $0<\alpha<1$.

Now we assume $3\le l\le \lceil\frac{r}{2}\rceil$ and $r\ge5$.
It is obvious that $\mathcal{V}_{d}(\mathcal{H})=\mathcal{V}_{d}(\mathcal{H}_2)$ and $\pi_{f}(\mathcal{A}_{\alpha}(\mathcal{H}))=\pi_{f}(\mathcal{A}_{\alpha}(\mathcal{H}_2))$ for any $f\in\mathcal{F}_d(H)$ with $H\in\mathcal{V}_{d}(\mathcal{H})$ and $d=1,\ldots,(l+1)k-1$.
From Theorem \ref{T}, we know that $\mathrm{Tr}_t(\mathcal{A}_{\alpha}(\mathcal{H}))=\mathrm{Tr}_t(\mathcal{A}_{\alpha}(\mathcal{H}_2))$ for $t=0,1,\ldots,(l+1)k-1$.
Note that the only difference between $\mathcal{V}_{(l+1)k}(\mathcal{H})$ and $\mathcal{V}_{(l+1)k}(\mathcal{H}_2)$ is the number of the subhypergraph $P_{l+1}^{(k)}$.
From Lemma 13 in \cite{ref10}, we have
\begin{align*}
&\mathrm{Tr}_{(l+1)k}(\mathcal{A}_{\alpha}(\mathcal{H}_2))-\mathrm{Tr}_{(l+1)k}(\mathcal{A}_{\alpha}(\mathcal{H}))\\
=&(1-\alpha)^{(l+1)k}\left(\mathrm{Tr}_{(l+1)k}(\mathcal{A}(\mathcal{H}_2))-\mathrm{Tr}_{(l+1)k}(\mathcal{A}(\mathcal{H}))\right)\\
=&(1-\alpha)^{(l+1)k}(l+1)k(k-1)^{n}C_{P_{l+1}^{(k)}}\left( N_{\mathcal{H}_2}(P_{l+1}^{(k)})-N_{\mathcal{H}}(P_{l+1}^{(k)}) \right)\\
=&-(l+1)k(k-1)^{n}(1-\alpha)^{(l+1)k}C_{P_{l+1}^{(k)}}d_0<0,
\end{align*}
where $0<\alpha<1$.
Hence, $\mathcal{H}_2\prec_{\alpha}\mathcal{H}$ for $3\le l\le \lceil\frac{r}{2}\rceil$ and $0<\alpha<1$.
\end{proof}

\begin{lem}\label{5}
Let $P_{r}^{(k)}$ be a $k$-uniform hyperpath with the hyperedge set $\{e_1^{(k)},\ldots,e_{r}^{(k)}\}$, where $e_{i}^{(k)}=\{u_{i-1},w_{i,1},\ldots,w_{i,k-2},u_i\}$ and $w_{i,1},\ldots,w_{i,k-2}$ are cored vertices for $i\in[r]$.
The $k$-uniform hypergraph $\mathcal{H}$ is the coalescence of the hyperpath $P_{r}^{(k)}$ at the vertex $u_l$ with a $k$-uniform hypergraph $\mathcal{H}_0$ at its vertex $v_0$, where $1\le l\le \lfloor\frac{r}{2}\rfloor$ and $r\ge2$.
The $k$-uniform hypergraph $\mathcal{H}_{3}$ is obtained by moving $\mathcal{H}_0$ form $u_l$ to $u_{l-1}$, and $\mathcal{H}_3$ is said to be \textit{the third path-sliding transformation} of $\mathcal{H}$.
Then $\mathcal{H}_{3}\prec_{\alpha}\mathcal{H}$ for $k\ge2$ and $0<\alpha<1$.
\end{lem}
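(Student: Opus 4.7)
The plan is to split by $l$ and in each case locate the first order at which $\mathrm{Tr}_{t}(\mathcal{A}_{\alpha}(\mathcal{H}))$ and $\mathrm{Tr}_{t}(\mathcal{A}_{\alpha}(\mathcal{H}_{3}))$ diverge; throughout set $d_{0}=d_{\mathcal{H}_{0}}(v_{0})$, the nontrivial case being $d_{0}\ge 1$. For $l=1$, the vertex $u_{0}$ has degree $1$ in $P_{r}^{(k)}$ while $u_{1}$ has degree $2$, so the degree multisets differ and one computes $\sum_{v}d_{\mathcal{H}}(v)^{2}-\sum_{v}d_{\mathcal{H}_{3}}(v)^{2}=[(2+d_{0})^{2}+1]-[(1+d_{0})^{2}+4]=2d_{0}>0$. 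Since $\mathrm{Tr}_{0},\mathrm{Tr}_{1}$ coincide and, by Corollary~\ref{firstk+1}, $\mathrm{Tr}_{2}(\mathcal{A}_{\alpha})-\phi(2)$ depends only on $|E|$ (which is invariant), this gives $\mathrm{Tr}_{2}(\mathcal{A}_{\alpha}(\mathcal{H}))>\mathrm{Tr}_{2}(\mathcal{A}_{\alpha}(\mathcal{H}_{3}))$ and hence $\mathcal{H}_{3}\prec_{\alpha}\mathcal{H}$.

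For $l=2$ write $D=2+d_{0}$. Now $u_{l-1}$ and $u_{l}$ both have degree $2$ in $P_{r}^{(k)}$, so $\mathcal{H}$ and $\mathcal{H}_{3}$ have identical degree multisets and share $|E|$, $\sum_{i}d_{i}^{2}$, and $|\mathcal{K}_{k+1}|$ (no complete $k$-uniform subhypergraph can bridge $P_{r}^{(k)}$ and $\mathcal{H}_{0}$, since each cored vertex of $P_{r}^{(k)}$ lies in only one hyperedge). Corollary~\ref{firstk+1} then gives $\mathrm{Tr}_{t}(\mathcal{A}_{\alpha}(\mathcal{H}))=\mathrm{Tr}_{t}(\mathcal{A}_{\alpha}(\mathcal{H}_{3}))$ for $t\le k+1$. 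At order $k+2$ I apply Theorem~\ref{k+2}: $\phi(k+2)$ and the $\mathcal{K}_{k+1}$-summand are invariant because all $\mathcal{H}_{0}$-vertex degrees (including $v_{0}$'s) are unchanged, and $\mathrm{Tr}_{k+2}(\mathcal{A}(\cdot))$ is invariant because any connected Veblen infragraph spanning $\mathcal{H}_{0}$ and $P_{r}^{(k)}$ has at least $2k\ge k+2$ hyperedges, with the sole borderline case $k=2$ (where $k+2=2k$) contributing the same count $2d_{0}$ in both. Only the edge-based term is sensitive: expanding $\sum_{e}\bigl(\sum_{\{i,j\}\subset e}d_{i}d_{j}+\sum_{i\in e}d_{i}^{2}\bigr)$ over the three affected hyperedges $e_{1},e_{2},e_{3}$, the $e_{2}$ contribution cancels while $e_{1}$ contributes $(2-D)(k+1+D)$ and $e_{3}$ contributes $(D-2)(D+k+2)$, for a net $(D-2)[(D+k+2)-(D+k+1)]=d_{0}$. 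Thus $\mathrm{Tr}_{k+2}(\mathcal{A}_{\alpha}(\mathcal{H}))-\mathrm{Tr}_{k+2}(\mathcal{A}_{\alpha}(\mathcal{H}_{3}))=(k+2)(k-1)^{n-k}k^{k-2}\alpha^{2}(1-\alpha)^{k}\,d_{0}>0$ for $0<\alpha<1$, and so $\mathcal{H}_{3}\prec_{\alpha}\mathcal{H}$.

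For $l\ge 3$ I mimic the $l\ge 3$ treatment in Lemma~\ref{4}. The degree multisets still agree and $v_{0}$ has the same degree $2+d_{0}$ in both hypergraphs. I claim that for every $d\le (l+1)k-1$ there is a natural bijection (exchanging the labels $u_{l}\leftrightarrow u_{l-1}$) between the triples $(H,f,\pi_{f})$ appearing in Theorem~\ref{T} for $\mathcal{H}$ and for $\mathcal{H}_{3}$, giving $\mathrm{Tr}_{t}(\mathcal{A}_{\alpha}(\mathcal{H}))=\mathrm{Tr}_{t}(\mathcal{A}_{\alpha}(\mathcal{H}_{3}))$ at all orders $t\le (l+1)k-1$. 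At order $(l+1)k$ the distinguishing connected Veblen is $P_{l+1}^{(k)}$ with every hyperedge of multiplicity $k$; its count in $\mathcal{H}$ exceeds that in $\mathcal{H}_{3}$ by exactly $d_{0}$, because each of the $d_{0}$ hyperedges of $\mathcal{H}_{0}$ incident to $v_{0}$ can be appended to the full length-$l$ shorter path-side in $\mathcal{H}$, a configuration unavailable in $\mathcal{H}_{3}$ whose shorter side has only $l-1$ hyperedges. Applying Lemma~13 of \cite{ref10} as in the proof of Lemma~\ref{4} yields $\mathrm{Tr}_{(l+1)k}(\mathcal{A}_{\alpha}(\mathcal{H}_{3}))-\mathrm{Tr}_{(l+1)k}(\mathcal{A}_{\alpha}(\mathcal{H}))=-(1-\alpha)^{(l+1)k}(l+1)k(k-1)^{n}C_{P_{l+1}^{(k)}}d_{0}<0$, so $\mathcal{H}_{3}\prec_{\alpha}\mathcal{H}$.

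The main obstacle is the Veblen-invariance claim in the $l\ge 3$ case: showing that every connected Veblen infragraph $V$ with fewer than $(l+1)k$ hyperedges has the same weighted contribution in $\mathcal{H}$ and in $\mathcal{H}_{3}$. When $V$ spans $v_{0}$, its path-portion support is an interval of consecutive hyperedges each with multiplicity divisible by $k$, and the bound $d\le(l+1)k-1$ together with the fact that the $\mathcal{H}_{0}$-portion contributes at least $k$ hyperedges forces the path-support size $m\le l-1$. Parameterizing the path-portion by the pair $(p,q)$ counting hyperedges on each side of $v_{0}$ within its support, each of the at-most-two orientations (swapping the two sides) fits into $\mathcal{H}$ (where $v_{0}$'s neighbourhood lengths are $l$ and $r-l$) iff it fits into $\mathcal{H}_{3}$ (lengths $l-1$ and $r-l+1$), since $p,q\le m\le l-1$. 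The first obstruction occurs precisely at $m=l$ with $v_{0}$ as an endpoint of the sub-path, which is exactly the $P_{l+1}^{(k)}$-multiplicity-$k$ Veblen at total size $(l+1)k$.
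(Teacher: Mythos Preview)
Your proof is correct and follows essentially the same approach as the paper: the same three-way split into $l=1$, $l=2$, and $l\ge 3$, with the critical order being $2$, $k+2$, and $(l+1)k$ respectively, and the same final trace differences ($2d_{0}$ in $\sum d_v^2$; $(k+2)(k-1)^{n-k}k^{k-2}\alpha^{2}(1-\alpha)^{k}d_{0}$; and the $P_{l+1}^{(k)}$-count discrepancy $d_{0}$). Your treatment is in fact slightly more explicit than the paper's in a few places --- you justify why $\mathrm{Tr}_{k+2}(\mathcal{A}(\cdot))$ and the $\mathcal{K}_{k+1}$-term are invariant in the $l=2$ case (the paper simply writes ``Similarity to Lemma~\ref{4}''), and your final paragraph spells out the $(p,q)$-parameterisation for spanning Veblens in the $l\ge 3$ case, whereas the paper only asserts that $\mathcal{V}_{d}(\mathcal{H})=\mathcal{V}_{d}(\mathcal{H}_{3})$ and that the $\pi_{f}$-values coincide.
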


\begin{proof}
Let $|V(\mathcal{H})|=n$.
We abbreviate $d_{\mathcal{H}_0}(v_0)$ as $d_0$.

First we assume $l=1$. If $0<\alpha<1$, it is obvious that  $\mathrm{Tr}_t(\mathcal{A}_{\alpha}(\mathcal{H}))=\mathrm{Tr}_t(\mathcal{A}_{\alpha}(\mathcal{H}_3))$ for $i=0,1$.
And
\begin{align*}
\mathrm{Tr}_2(\mathcal{A}_{\alpha}(\mathcal{H}_3))-\mathrm{Tr}_2(\mathcal{A}_{\alpha}(\mathcal{H}))
=-2(k-1)^{n-1}\alpha^{2}d_0<0.
\end{align*}
Thus $\mathcal{H}_{3}\prec_{\alpha}\mathcal{H}$ for $l=1$ and $0<\alpha<1$.

Next, we assume $l\ge2$ and $r\ge4$. It is obvious that $\mathcal{H}$ and $\mathcal{H}_3$ have the same degree sequence.
When $l=2$, $\mathrm{Tr}_{t}(\mathcal{A}_{\alpha}(\mathcal{H}))=\mathrm{Tr}_t(\mathcal{A}_{\alpha}(\mathcal{H}_3))$ for $i=0,1,\ldots,k+1$.
Let $f_{l}^{(k)}$ denote the hyperedge of $\mathcal{H}_3$ which corresponds to $e_{l}^{(k)}$ in $\mathcal{H}$ for $l=1,\ldots,r$.
Similarity to Lemma \ref{4}, from Theorem \ref{k+2}, we have
\begin{align*}
&\mathrm{Tr}_{k+2}(\mathcal{A}_{\alpha}(\mathcal{H}_3))-\mathrm{Tr}_{k+2}(\mathcal{A}_{\alpha}(\mathcal{H}))\\
=&(k+2)(k-1)^{n-k}k^{k-2} \alpha^2(1-\alpha)^{k} \Bigg(\sum\limits_{l=1}^{3}\bigg(\sum\limits_{\{i,j\}\subset f^{(k)}_l}d_{\mathcal{H}_3}(i)d_{\mathcal{H}_3}(j)+\sum\limits_{i\in f^{(k)}_l}d_{\mathcal{H}_3}(i)^{2}\bigg) \\
&-\sum\limits_{l=1}^{3}\bigg(\sum\limits_{\{i,j\}\subset e^{(k)}_l}d_{\mathcal{H}}(i)d_{\mathcal{H}}(j)+\sum\limits_{i\in e^{(k)}_l}d_{\mathcal{H}}(i)^{2}\bigg)\Bigg) \\
=&-(k+2)(k-1)^{n-k}k^{k-2}\alpha^{2}(1-\alpha)^{k}d_0<0,
\end{align*}
where $0<\alpha<1$.
Thus $\mathcal{H}_{3}\prec_{\alpha}\mathcal{H}$ for $l=2$ and $0<\alpha<1$.

When $3\le l\le \lfloor\frac{r}{2}\rfloor$ and $r\ge6$.
Similarity to Lemma \ref{4}, we know that $\mathrm{Tr}_t(\mathcal{A}_{\alpha}(\mathcal{H}))=\mathrm{Tr}_t(\mathcal{A}_{\alpha}(\mathcal{H}_3))$ for $t=0,1,\ldots,(l+1)k-1$ and the only difference between $\mathcal{V}_{(l+1)k}(\mathcal{H})$ and $\mathcal{V}_{(l+1)k}(\mathcal{H}_3)$ is the number of the subhypergraph $P_{l+1}^{(k)}$.
From Lemma 13 in \cite{ref10}, we have
\begin{align*}
&\mathrm{Tr}_{(l+1)k}(\mathcal{A}_{\alpha}(\mathcal{H}_2))-\mathrm{Tr}_{(l+1)k}(\mathcal{A}_{\alpha}(\mathcal{H}))\\
=&(1-\alpha)^{(l+1)k}\left(\mathrm{Tr}_{(l+1)k}(\mathcal{A}(\mathcal{H}_2))-\mathrm{Tr}_{(l+1)k}(\mathcal{A}(\mathcal{H}))\right)\\
=&(l+1)k(k-1)^{n}(1-\alpha)^{(l+1)k}C_{P_l^{(k)}}\left( N_{\mathcal{H}_3}(P_{l+1}^{(k)})-N_{\mathcal{H}}(P_{l+1}^{(k)}) \right)\\
=&-(l+1)k(k-1)^{n}(1-\alpha)^{(l+1)k}C_{P_{l+1}^{(k)}}d_0<0,
\end{align*}
where $0<\alpha<1$.
Then $\mathcal{H}_{3}\prec_{\alpha}\mathcal{H}$ for $3\le l\le \lfloor\frac{r}{2}\rfloor$ and $0<\alpha<1$.
\end{proof}

\section{The $S_{\alpha}$-order in linear unicyclic hypergraphs}
The \textit{girth} of a hypergraph is the minimum length of the cycles in it.
A hypergraph is said to be \textit{linear} if any two different hyperedges intersect into at most one vertex.
Note that a $k$-uniform unicyclic hypergraph is linear if and only if its girth greater than or equal to $3$.
In this section, we give the first and the last hypergraphs in $S_{\alpha}$-order among all $k$-uniform linear unicyclic hypergraphs with given girth.
And the first, the second, the last and the second last hypergraphs in $S_{\alpha}$-order among all $k$-uniform linear unicyclic hypergraphs are also characterized.

Let $C_{g}^{(k)}\odot S_{m-g}^{(k)}$ be the $k$-uniform linear unicyclic hypergraph with $m\ge3$ hyperedges and girth $g\ge3$ obtained from the hypercycle $C_g^{(k)}$ by adding $m-g$ pendant hyperedges to a non-cored vertex of $C_{g}^{(k)}$.

\begin{thm}\label{Ung}
Among all $k$-uniform linear unicyclic hypergraphs with $m\ge3$ hyperedges and girth $g(3\le g\le m)$, the last hypergraph in $S_{\alpha}$-order is $C_{g}^{(k)}\odot S_{m-g}^{(k)}$ for $k\ge2$ and $0<\alpha<1$.
\end{thm}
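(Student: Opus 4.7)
The plan is to apply the $\sigma$-transformation of Lemma~\ref{1} iteratively to any $k$-uniform linear unicyclic hypergraph $\mathcal{H}$ with $m$ hyperedges and girth $g$, obtaining a strictly $S_\alpha$-larger hypergraph at every step, until the target $C_{g}^{(k)}\odot S_{m-g}^{(k)}$ is reached. Any such $\mathcal{H}$ is the hypercycle $C_g^{(k)}$ together with subhypertrees $T_1,\ldots,T_g$ rooted at the non-cored cycle vertices $v_1,\ldots,v_g$, so the case $m=g$ is trivial. For $m>g$, I would fix $v^*$ to be a non-cored cycle vertex of maximum degree in $\mathcal{H}$; since at least one $T_i$ is nontrivial, $d_{\mathcal{H}}(v^*)\ge 3$.

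The first phase flattens the subhypertrees. As long as some off-cycle vertex of $\mathcal{H}$ carries a pendant hyperedge, I would choose such a vertex $u$ of maximum depth in its subhypertree. By this maximality every edge incident to $u$ other than the one pointing back toward the cycle is a pendant, so $d_{\mathcal{H}}(u)=s+1$, where $s\ge 1$ counts the pendants at $u$. Since $d_{\mathcal{H}}(v^*)\ge 3>1$, the hypothesis $d_{\mathcal{H}}(u)=s+1<s+d_{\mathcal{H}}(v^*)$ of Lemma~\ref{1} holds, so the $\sigma$-transformation moving all $s$ pendants from $u$ to $v^*$ produces a strictly $S_\alpha$-larger hypergraph. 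After the move $u$ has degree one, which turns the former parent edge of $u$ into a new pendant at the parent vertex, and the phase can continue until no off-cycle pendant remains.

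A second phase consolidates the remaining pendants, which by now sit only at non-cored cycle vertices. If some cycle vertex $v'\neq v^*$ still carries $s\ge 1$ pendants, then $d_{\mathcal{H}}(v')=s+2<s+d_{\mathcal{H}}(v^*)$, and another application of Lemma~\ref{1} moves all of them to $v^*$.

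The whole process strictly increases $\sum_{v}d_{\mathcal{H}}(v)^{2}$, which is bounded above, so it terminates in finitely many steps, and the only configuration admitting no further $\sigma$-transformation of this form is $C_{g}^{(k)}\odot S_{m-g}^{(k)}$; hence $\mathcal{H}\preceq_{\alpha}C_{g}^{(k)}\odot S_{m-g}^{(k)}$ with equality only at the target. I expect the main obstacle to be verifying, at every intermediate stage, the existence of a pair $(u,v^*)$ satisfying $d_{\mathcal{H}}(u)<d_{\mathcal{H}}(v^*)+s$; this is exactly what the maximum-depth choice of $u$ guarantees, by cutting the non-pendant contribution to $d_{\mathcal{H}}(u)$ down to the single parent edge (or, in Phase~2, to the two cycle edges), while $d_{\mathcal{H}}(v^*)\ge 3$ provides the needed slack.
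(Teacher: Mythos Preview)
Your approach matches the paper's: iterate the $\sigma$-transformation of Lemma~\ref{1} until $C_g^{(k)}\odot S_{m-g}^{(k)}$ is reached, each step being strictly $S_\alpha$-increasing. The paper's own proof is in fact terser than yours---it merely asserts that repeated $\sigma$-transformation reaches the target and invokes Lemma~\ref{1}, without the stepwise verification of the degree hypothesis that you carry out.

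One oversight in your write-up: for $k\ge 3$ the decomposition ``$\mathcal{H}$ is $C_g^{(k)}$ with subhypertrees $T_1,\dots,T_g$ rooted at the non-cored cycle vertices $v_1,\dots,v_g$'' is incomplete, since subtrees may also be attached at any of the $g(k-2)$ cored vertices of $C_g^{(k)}$. Consequently the claim $d_{\mathcal{H}}(v^*)\ge 3$ can fail initially (take $k=g=3$, $m=4$, with the single extra pendant attached at an interior cycle vertex; then every corner has degree exactly~$2$). This does not actually break the argument. Phase~1 only needs $d_{\mathcal{H}}(v^*)>1$, which any corner vertex satisfies. In Phase~2, if $v'$ is an interior cycle vertex then $d_{\mathcal{H}}(v')=s+1<s+2\le s+d_{\mathcal{H}}(v^*)$, while if $v'$ is itself a corner with $s\ge 1$ pendants then $d_{\mathcal{H}}(v')=s+2\ge 3$ already forces $d_{\mathcal{H}}(v^*)\ge 3$ by the maximality of $v^*$ among corners (the degree of $v'\neq v^*$ is unchanged by Phase~1, and $d_{\mathcal{H}}(v^*)$ only grows). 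The fix is simply to enlarge the root set of the $T_i$ to all cycle vertices and split the Phase~2 inequality into these two cases for $v'$.
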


\begin{proof}
For any $k$-uniform linear unicyclic hypergraph $\mathcal{H}$ with $m$ hyperedges and girth $g$, it is obvious that $C_g^{(k)}$ is a subhypergraph of $\mathcal{H}$.
Repeating $\sigma$-transformation, $\mathcal{H}$ can be changed into a linear unicyclic hypergraph obtained from the hypercycle $C_g^{(k)}$ by adding $m-g$ pendant hyperedges to one of non-cored vertices of $C_g^{(k)}$, that is, $C_{g}^{(k)}\odot S_{m-g}^{(k)}$. From Lemma \ref{1},
we know that $\mathcal{H}\prec_{\alpha}C_{g}^{(k)}\odot S_{m-g}^{(k)}$.
Therefore, $C_{g}^{(k)}\odot S_{m-g}^{(k)}$ is the last hypergraph in $S_{\alpha}$-order among all $k$-uniform linear unicyclic hypergraphs with $m$ hyperedges and girth $g$.
\end{proof}

\begin{thm}\label{un3}
Among all $k$-uniform linear unicyclic hypergraphs with $m\ge3$ hyperedges, the last hypergraph in $S_{\alpha}$-order is $C_{3}^{(k)}\odot S_{m-3}^{(k)}$ for $k\ge2$ and $0<\alpha<1$.
\end{thm}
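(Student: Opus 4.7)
The plan is to reduce the theorem, via Theorem \ref{Ung}, to a comparison between the candidates $\mathcal{H}_g := C_{g}^{(k)}\odot S_{m-g}^{(k)}$ for $3\le g\le m$, and then to show that $\mathcal{H}_3$ strictly beats every other $\mathcal{H}_g$ already at the second $\mathcal{A}_\alpha$-spectral moment. More precisely, for any linear unicyclic $\mathcal{H}$ of girth $g$ with $m$ hyperedges, Theorem \ref{Ung} gives $\mathcal{H}\preceq_\alpha \mathcal{H}_g$, so it suffices to establish $\mathcal{H}_g\prec_\alpha \mathcal{H}_3$ whenever $g\ge 4$, which I will do by proving the one-step inequality $\mathcal{H}_g\prec_\alpha \mathcal{H}_{g-1}$ for every $4\le g\le m$ and iterating.

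The first step is to read off the basic data of $\mathcal{H}_g$. It has $m(k-1)$ vertices, independently of $g$, with degree sequence consisting of one central vertex of degree $m-g+2$ (the cycle joint to which all $m-g$ pendant hyperedges are attached), $g-1$ other cycle joints of degree $2$, and $m(k-1)-g$ cored vertices of degree $1$. Hence $\sum_i d_i=km$ is also independent of $g$, which together with $|V(\mathcal{H}_g)|$ being constant gives, via Corollary \ref{firstk+1}, $\mathrm{Tr}_0(\mathcal{A}_\alpha(\mathcal{H}_g))=\mathrm{Tr}_0(\mathcal{A}_\alpha(\mathcal{H}_{g-1}))$ and $\mathrm{Tr}_1(\mathcal{A}_\alpha(\mathcal{H}_g))=\mathrm{Tr}_1(\mathcal{A}_\alpha(\mathcal{H}_{g-1}))$.

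The decisive step is to compare $\mathrm{Tr}_2$. Since $|E(\mathcal{H}_g)|=m$ is also independent of $g$, the extra $|E|$-term that appears in the $k=2$ branch of Corollary \ref{firstk+1} cancels, and in both the $k=2$ and $k\ge 3$ cases one has
\[
\mathrm{Tr}_2(\mathcal{A}_\alpha(\mathcal{H}_g))-\mathrm{Tr}_2(\mathcal{A}_\alpha(\mathcal{H}_{g-1})) = (k-1)^{n-1}\alpha^{2}\Bigl(\textstyle\sum_i d_i^2(\mathcal{H}_g)-\sum_i d_i^2(\mathcal{H}_{g-1})\Bigr).
\]
A short expansion with the degree sequence above yields
\[
\sum_i d_i^2(\mathcal{H}_g)-\sum_i d_i^2(\mathcal{H}_{g-1}) = (m-g+2)^2 - (m-g+3)^2 + 3 = -2(m-g+1),
\]
which is strictly negative for every $g\le m$. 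Combined with $0<\alpha<1$, this gives $\mathcal{H}_g\prec_\alpha \mathcal{H}_{g-1}$, and iterating over $g=m,m-1,\dots,4$ produces $\mathcal{H}_g\prec_\alpha \mathcal{H}_3$ for all $g\ge 4$. Chaining with Theorem \ref{Ung} yields $\mathcal{H}\preceq_\alpha \mathcal{H}_g\preceq_\alpha \mathcal{H}_3$ with equality only when $\mathcal{H}=\mathcal{H}_3$, which is the claim.

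The argument is short because the second-moment invariant already separates the candidates; the only point requiring care is handling the $k=2$ case of Corollary \ref{firstk+1} (where $d=k$ and an $|E|$-term appears), but that term contributes equally on both sides and so does not affect the comparison. I do not anticipate a substantive obstacle beyond correctly tallying the degree sequence at the boundary values $g=m$ (where there are no pendant hyperedges) and $g=m-1$ (one pendant hyperedge at a cycle joint of degree $3$).
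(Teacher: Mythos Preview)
Your proposal is correct and follows essentially the same approach as the paper: reduce via Theorem \ref{Ung} to the candidates $\mathcal{H}_g=C_g^{(k)}\odot S_{m-g}^{(k)}$, note that $\mathrm{Tr}_0$ and $\mathrm{Tr}_1$ coincide, and then show $\mathrm{Tr}_2$ is strictly maximized at $g=3$. The only cosmetic difference is that the paper writes $\mathrm{Tr}_2(\mathcal{A}_\alpha(\mathcal{H}_g))$ as an explicit quadratic $f(g)$ and checks $f'(g)<0$ on $[3,m]$, whereas you compute the discrete difference $\sum d_i^2(\mathcal{H}_g)-\sum d_i^2(\mathcal{H}_{g-1})=-2(m-g+1)$ and telescope; your explicit remark that the $|E|$-term in the $k=2$ branch of Corollary \ref{firstk+1} cancels is a small clarification the paper omits.
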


\begin{proof}
Note that the $\mathcal{A}_{\alpha}$-spectral moments $\mathrm{Tr}_0(\mathcal{A}_{\alpha})$ and $\mathrm{Tr}_1(\mathcal{A}_{\alpha})$ are same for all $k$-uniform linear unicyclic hypergraphs with $m$ hyperedges.
Let $\mathcal{U}_{m}=\{C_{g}^{(k)}\odot S_{m-g}^{(k)}|~ 3\le g\le m\}$.
From Theorem \ref{Ung}, we know that $C_{g}^{(k)}\odot S_{m-g}^{(k)}$ is the last hypergraph among all $k$-uniform linear unicyclic hypergraphs with $m$ hyperedges and girth $g$.
Then the last hypergraph in $S_{\alpha}$-order among all $k$-uniform linear unicyclic hypergraphs with $m$ hyperedges is the last hypergraph in $\mathcal{U}_{m}$. And
\begin{align*}
&\mathrm{Tr}_{2}(\mathcal{A}_{\alpha}(C_{g}^{(k)}\odot S_{m-g}^{(k)}))\\
=&(k-1)^{m(k-1)-1} \alpha^2 \left( (2+m-g)^2+4(g-1)+g(k-2)+(m-g)(k-1) \right)\\
=&(k-1)^{m(k-1)-1} \alpha^2 \left( g^2-(2m+1)g+m(m+k+3) \right).
\end{align*}
Let $f(x)=x^2-(2m+1)x+m(m+k+3)$ for $3\le x\le m$. We have $f^{'}(x)=2(x-m)-1\le0$ for $3\le x\le m$, then $f(x)\le f(3)$ for $3\le x\le m$.
Hence, $\mathrm{Tr}_{2}(\mathcal{A}_{\alpha}(C_{g}^{(k)}\odot S_{m-g}^{(k)}))\le \mathrm{Tr}_{2}(\mathcal{A}_{\alpha}(C_{3}^{(k)}\odot S_{m-3}^{(k)}))$ for $3\le g\le m$ with the equality if and only if $g=3$.
Then $U\preceq_{\alpha}C_{3}^{(k)}\odot S_{m-3}^{(k)}$ for $U\in\mathcal{U}_{m}$ with the equality if and only if $U\cong C_{3}^{(k)}\odot S_{m-3}^{(k)}$.
Therefore, $C_{3}^{(k)}\odot S_{m-3}^{(k)}$ is the last hypergraph in $S_{\alpha}$-order among all $k$-uniform linear unicyclic hypergraphs with $m$ hyperedges.
\end{proof}

Let $C^{(k)}_{3;n_1,n_2,n_3}$ be the $k$-uniform linear unicyclic hypergraph obtained from the hypercycle $C_{3}^{(k)}$ by adding $n_1,~ n_2,~ n_3$ pendant hyperedges to three non-cored vertices of $C_{3}^{(k)}$, respectively.

\begin{thm}\label{2ndUm}
Among all $k$-uniform linear unicyclic hypergraphs with $m\ge3$ hyperedges, the second last hypergraph in $S_{\alpha}$-order is $C^{(k)}_{3;m-4,1,0}$ for $k\ge2$ and $0<\alpha<1$.
\end{thm}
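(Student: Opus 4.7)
The plan is to identify the strictly second-largest value of $\sum_v d(v)^2$ among $k$-uniform linear unicyclic hypergraphs with $m$ hyperedges, and then verify that it is attained only by $C^{(k)}_{3;m-4,1,0}$. Since $\mathrm{Tr}_0$ and $\mathrm{Tr}_1$ are identical for every member of this family (both depend only on $n=m(k-1)$ and $\sum_v d(v)=mk$), Corollary \ref{firstk+1} shows that $\mathrm{Tr}_2(\mathcal{A}_\alpha(\mathcal{H}))$ differs between such hypergraphs only through the factor $(k-1)^{n-1}\alpha^2\sum_v d(v)^2$. Hence the second-last hypergraph in $S_\alpha$-order must be the unique maximizer of $\sum_v d(v)^2$ after excluding the extremal graph identified in Theorem \ref{un3}.

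I would use the identity
\begin{equation*}
\sum_v d(v)^2=m(k-1)+2m+\sum_{v:\,d(v)\ge 2}(d(v)-1)^2,
\end{equation*}
obtained from $\sum d(v)=mk$ and $|V|=m(k-1)$. Setting $y_i=d(v_i)-1\ge 1$ over junction vertices $v_i$, the problem reduces to maximizing $\sum y_i^2$ subject to $\sum y_i=m$, $y_i\ge 1$, and at least three summands (the non-cored cycle vertices of the girth-$g\ge 3$ cycle are always junctions). Convexity gives that the largest value is $(m-2)^2+2$, attained uniquely at $(m-2,1,1)$ and corresponding to $C^{(k)}_{3;m-3,0,0}=C_3^{(k)}\odot S_{m-3}^{(k)}$, while the strict runner-up is $(m-3)^2+5$, attained uniquely at $(m-3,2,1)$ for $m\ge 5$. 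I would verify uniqueness by checking (i) every three-part partition with largest part at most $m-4$ satisfies $\sum y_i^2\le (m-4)^2+10<(m-3)^2+5$ for $m\ge 7$ (and directly at $m=5,6$), and (ii) every partition with $s\ge 4$ parts satisfies $\sum y_i^2\le (m-3)^2+3<(m-3)^2+5$.

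Finally I would argue that the partition $(m-3,2,1)$ is realized by a unique hypergraph up to isomorphism: it forces exactly three junction vertices of degrees $m-2,3,2$, and since the girth is $3$ these must be the non-cored vertices of a $C_3^{(k)}$; linearity then forces the remaining $m-3$ hyperedges to be pendants, $m-4$ of them at the degree-$(m-2)$ vertex and one at the degree-$3$ vertex, which is exactly $C^{(k)}_{3;m-4,1,0}$. The main obstacle is the case analysis in the uniqueness step, especially the borderline small values $m=5,6$ where several partition labels coincide; once these are dispatched, Corollary \ref{firstk+1} converts the strict inequality on $\sum_v d(v)^2$ into $\mathcal{H}\prec_\alpha C^{(k)}_{3;m-4,1,0}$ at the $\mathrm{Tr}_2$ level for every $\mathcal{H}\neq C^{(k)}_{3;m-3,0,0},\,C^{(k)}_{3;m-4,1,0}$, completing the proof.
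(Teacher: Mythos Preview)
Your approach is correct and takes a genuinely different route from the paper's. Both arguments reduce, via Corollary~\ref{firstk+1}, to showing that $C^{(k)}_{3;m-4,1,0}$ uniquely attains the second-largest value of $\sum_v d(v)^2$. The paper does this through the transformation machinery: invoking Theorem~\ref{un3} and Lemma~\ref{1}, it asserts that the second-last hypergraph must be obtained from $C_{3}^{(k)}\odot S_{m-4}^{(k)}$ by attaching one pendant hyperedge at a vertex of degree less than $m-2$, and then compares $\mathrm{Tr}_2$ among this finite list of candidates. You instead bypass the $\sigma$-transformation entirely, recasting the problem as maximizing $\sum y_i^2$ over integer partitions of $m$ into at least three positive parts and then reconstructing the hypergraph from the runner-up partition $(m-3,2,1)$. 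Your method is more self-contained and makes the uniqueness step fully explicit: exactly three junction vertices forces girth $3$, and the absence of further junctions forces every non-cycle edge to be a pendant at a cycle corner, which pins down $C^{(k)}_{3;m-4,1,0}$. The paper's version is shorter but leaves the reduction to its candidate set $U'$ rather terse. The small-$m$ case analysis you flag is genuine but not a defect of your method: for $m\le 4$ one has $C^{(k)}_{3;m-4,1,0}\cong C_3^{(k)}\odot S_{m-3}^{(k)}$, so the issue lies in the theorem's stated range rather than in either proof.
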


\begin{proof}
Let $U^{'}$ denote the hypergraph obtained by adding a pendant hyperedge to a vertex of $C_{3}^{(k)}\odot S_{m-4}^{(k)}$ whose degree is not $m-2$.
From Theorem \ref{un3} and the $\sigma$-transformation of hypergraphs, we know that the second last linear unicyclic hypergraph might be $U^{'}$.
If the pendant hyperedge is added to a vertex of degree $2$, then $U^{1}$ is the unicyclic hypergraph $C^{(k)}_{3;m-4,1,0}$.
It is obvious the $\mathcal{A}_{\alpha}$-spectral moments $\mathrm{Tr}_0(\mathcal{A}_{\alpha})$ and $\mathrm{Tr}_1(\mathcal{A}_{\alpha})$ are same for all $k$-uniform linear unicyclic hypergraphs with $m$ hyperedges.
For all $U^{'}$ which is obtained by adding a pendant hyperedge to a cored vertex of $C_{3}^{(k)}\odot S_{m-4}^{(k)}$, we have $\mathrm{Tr}_{2}(\mathcal{A}_{\alpha}(C^{(k)}_{3;m-4,1,0}))>\mathrm{Tr}_{2}(\mathcal{A}_{\alpha}(U^{'}))$, then $U^{'}\prec_{\alpha}C^{(k)}_{3;m-4,1,0}$. Therefore, $C^{(k)}_{3;m-4,1,0}$ is the second last hypergraph in $S_{\alpha}$-order among all $k$-uniform linear unicyclic hypergraphs with $m$ hyperedges.
\end{proof}

Let $\mathcal{U}_{m,g}$ be the set of all $k$-uniform linear unicyclic hypergraphs with $m$ hyperedges and girth $g$.
Let $\mathbb{U}_{m,g}$ be the set of the hypergraphs in $\mathcal{U}_{m,g}$ satisfies the degree of each vertex is at most 2.
Then we have the following conclusion.

\begin{lem}\label{Uu}
For any $U\in\mathbb{U}_{m,g}$ and $U^{'}\in\mathcal{U}_{m,g}\setminus\mathbb{U}_{m,g}$, $U\prec_{\alpha}U^{'}$ for $0<\alpha<1$.
\end{lem}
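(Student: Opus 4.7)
The plan is to separate $U\in\mathbb{U}_{m,g}$ from $U'\in\mathcal{U}_{m,g}\setminus\mathbb{U}_{m,g}$ at the second order $\mathcal{A}_{\alpha}$-spectral moment by comparing the quantities $\sum_v d(v)^2$. For every member of $\mathcal{U}_{m,g}$, the $k$-uniform linear unicyclic structure forces $|V|=m(k-1)$ and $\sum_v d(v)=mk$; hence $\mathrm{Tr}_0(\mathcal{A}_{\alpha}(\cdot))$ and $\mathrm{Tr}_1(\mathcal{A}_{\alpha}(\cdot))=\phi(1)$ depend only on $m$ and $k$ and already agree on $U$ and $U'$. So it suffices to show a strict inequality at order $2$.

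First I would compute $\sum_v d_U(v)^2$ for $U\in\mathbb{U}_{m,g}$. Since all degrees lie in $\{1,2\}$, letting $a,b$ denote the numbers of vertices of degrees $2$ and $1$, the system $a+b=m(k-1)$, $2a+b=mk$ forces $a=m$, $b=m(k-2)$, and therefore
\[
\sum_{v\in V(U)}d_U(v)^2=4m+m(k-2)=m(k+2).
\]
For $U'\in\mathcal{U}_{m,g}\setminus\mathbb{U}_{m,g}$, write $c_j$ for the number of vertices of degree $j$ in $U'$, with $c_{j_0}\ge1$ for some $j_0\ge3$. Using $j^2=j+j(j-1)$ together with the identities $\sum_j c_j=m(k-1)$ and $\sum_j jc_j=mk$, one finds $c_2=m-\sum_{j\ge3}(j-1)c_j$ and hence
\[
\sum_{v\in V(U')}d_{U'}(v)^2=m(k+2)+\sum_{j\ge3}(j-1)(j-2)\,c_j>m(k+2),
\]
where strict inequality follows from $(j-1)(j-2)\ge2$ for $j\ge3$ and $c_{j_0}\ge1$.

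Finally I would invoke Corollary \ref{firstk+1}: for $k\ge3$ one has $\mathrm{Tr}_2(\mathcal{A}_{\alpha}(\mathcal{H}))=\phi(2)=(k-1)^{n-1}\alpha^2\sum_v d(v)^2$, while for $k=2$ one gets $\phi(2)+2(1-\alpha)^2|E(\mathcal{H})|$, in which only $\phi(2)$ varies across $\mathcal{U}_{m,g}$. In either case, for $0<\alpha<1$ the previous estimate gives
\[
\mathrm{Tr}_2(\mathcal{A}_{\alpha}(U))<\mathrm{Tr}_2(\mathcal{A}_{\alpha}(U')),
\]
and combining this with the agreement at orders $0$ and $1$ yields $U\prec_{\alpha}U'$. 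The main (and essentially only) non-routine step is the algebraic identity that writes $\sum d^2$ as $m(k+2)$ plus a non-negative correction which vanishes precisely when all degrees lie in $\{1,2\}$; everything else is degree-sum bookkeeping and a direct application of Corollary \ref{firstk+1}.
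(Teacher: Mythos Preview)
Your argument is correct. You separate $U$ from $U'$ already at $\mathrm{Tr}_2$ by computing $\sum_v d(v)^2$ directly: the constraints $\sum_v 1=m(k-1)$ and $\sum_v d(v)=mk$ force $\sum_v d(v)^2=m(k+2)+\sum_{j\ge3}(j-1)(j-2)c_j$, which is exactly $m(k+2)$ on $\mathbb{U}_{m,g}$ and strictly larger outside it. Then Corollary~\ref{firstk+1} converts this into $\mathrm{Tr}_2(\mathcal{A}_{\alpha}(U))<\mathrm{Tr}_2(\mathcal{A}_{\alpha}(U'))$ for $0<\alpha<1$.

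The paper proceeds differently: it repeatedly applies the inverse $\sigma$-transformation (Lemma~\ref{1}) and the first path-sliding transformation (Lemma~\ref{3}) to push $U'$ down to some element of $\mathbb{U}_{m,g}$, each step strictly decreasing the $S_\alpha$-order. Both routes ultimately rest on the same $\mathrm{Tr}_2$ comparison (that is exactly what Lemmas~\ref{1} and~\ref{3} check), but yours is more direct and has two small advantages. First, it makes the ``for any $U$'' quantifier transparent: since every $U\in\mathbb{U}_{m,g}$ has the same degree multiset, hence the same $\mathrm{Tr}_0,\mathrm{Tr}_1,\mathrm{Tr}_2$, the conclusion holds for \emph{all} such $U$, whereas the transformation argument a priori only lands on one particular $U$. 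Second, your proof does not rely on Lemma~\ref{3}, which is stated only for $k\ge3$; your handling of the $k=2$ case via the extra $2(1-\alpha)^2|E|$ term is fine because that term is constant on $\mathcal{U}_{m,g}$. The paper's transformation approach, on the other hand, packages the degree bookkeeping into reusable moves that are also needed elsewhere in Section~5.
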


\begin{proof}
For any $k$-uniform linear unicyclic hypergraph $U^{'}\in\mathcal{U}_{m,g}\setminus\mathbb{U}_{m,g}$, by using the inverse $\sigma$-transformation and the first path-sliding transformation repeatedly, we can change $U^{'}$ into a $k$-uniform linear unicyclic hypergraph $U$ in $\mathbb{U}_{m,g}$.
From Lemma \ref{1} and Lemma \ref{3}, we know that $U\prec_{\alpha}U^{'}$.
\end{proof}

Let $C_{g}^{(k)}\cdot P_{m-g}^{(k)}$ be the coalescence of the hypercycle $C_{g}^{(k)}$ at one of its cored vertices with the hyperpath $P_{m-g}^{(k)}$ at one of its pendant vertices.
If $g=m$, then the unicyclic hypergraph $C_{m}^{(k)}\cdot P_{0}^{(k)}$ is the hypercycle $C_{m}^{(k)}$.

\begin{thm}\label{cgpn-g}
Among all $k$-uniform linear unicyclic hypergraphs with $m\ge3$ hyperedges and girth $g(3\le g\le m)$, the first hypergraph in $S_{\alpha}$-order is $C_{g}^{(k)}\cdot P_{m-g}^{(k)}$ for $k\ge3$ and $0<\alpha<1$.
\end{thm}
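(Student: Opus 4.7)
The plan is to first invoke Lemma \ref{Uu} to reduce to the subclass $\mathbb{U}_{m,g}$ in which every vertex has degree at most $2$: any $U' \in \mathcal{U}_{m,g} \setminus \mathbb{U}_{m,g}$ satisfies $U \prec_{\alpha} U'$ for some $U \in \mathbb{U}_{m,g}$, so it suffices to identify the minimum inside $\mathbb{U}_{m,g}$.

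Next, I would observe that every $U \in \mathbb{U}_{m,g}$ has the same degree sequence, namely $m$ vertices of degree $2$ and $m(k-2)$ vertices of degree $1$ (since the cycle contributes $g$ degree-$2$ junctions and each of the $m-g$ branch hyperedges contributes exactly one additional degree-$2$ vertex). Combined with $|E(U)| = m$ and the fact that $|\mathcal{K}_{k+1}(U)| = 0$ by linearity of $U$ when $k \ge 3$, Corollary \ref{firstk+1} immediately gives $\mathrm{Tr}_d(\mathcal{A}_{\alpha}(U)) = \mathrm{Tr}_d(\mathcal{A}_{\alpha}(C_g^{(k)} \cdot P_{m-g}^{(k)}))$ for all $U \in \mathbb{U}_{m,g}$ and all $d \leq k+1$, so any strict inequality in $S_{\alpha}$-order must first appear at $d = k+2$.

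To analyze $\mathrm{Tr}_{k+2}$ via Theorem \ref{k+2}, the $\mathcal{K}_{k+1}$-term vanishes by linearity, and I would further show that $\mathrm{Tr}_{k+2}(\mathcal{A}(U)) = 0$ as well. The key combinatorial fact is that any connected Veblen infragraph $H$ of a linear unicyclic $U$ with $k \geq 3$ has either $|E(H)| = k$ (these being exactly the infragraphs $\mathcal{E}_e$) or $|E(H)| \geq 2k > k+2$: whenever the underlying support $G$ of $H$ has $\geq 2$ hyperedges, at least two of them contain a cored vertex in $G$ (either two pendant hyperedges when $G$ is a hypertree, or two cycle hyperedges when $G$ contains the unique cycle, using $k-2 \geq 1$ cored vertices on each), forcing those two multiplicities in $H$ to be $\geq k$. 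Consequently $\mathrm{Tr}_{k+2}(\mathcal{A}_{\alpha}(U))$ differs across $\mathbb{U}_{m,g}$ only through $S(U) := \sum_{e \in E(U)}\bigl(\sum_{\{i,j\} \subset e} d_i d_j + \sum_{i \in e} d_i^2\bigr)$; writing $a_e$ for the number of degree-$2$ vertices in $e$ and using $\sum_e a_e = 2m$, a direct expansion yields $S(U) = \tfrac{1}{2} m(k+2)(k+3) + \tfrac{1}{2} \sum_e a_e^2$, reducing the task to minimizing $\sum_e a_e^2$ over $\mathbb{U}_{m,g}$.

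For the final step I would split hyperedges into cycle hyperedges and branch hyperedges. Letting $t$ be the total number of branches off the cycle and $b_e$ the number of branches attached to cored vertices of cycle hyperedge $e$ (so $\sum_{e \in \text{cycle}} b_e = t$), bookkeeping gives $\sum_e a_e^2 = 4m + t + \sum_{e \in \text{cycle}} b_e^2$. Since $b_e^2 \geq b_e$ for nonnegative integers, this is at least $4m + 2$, with equality if and only if $t = 1$; and by the cyclic symmetry of $C_g^{(k)}$ the unique $U \in \mathbb{U}_{m,g}$ achieving $t = 1$ (up to isomorphism) is $C_g^{(k)} \cdot P_{m-g}^{(k)}$. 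The main obstacle I anticipate is the rigorous justification of the Veblen vanishing in the third paragraph, since it requires a uniform argument that no connected Veblen infragraph of a linear unicyclic hypergraph can have exactly $k+1$ or $k+2$ hyperedges when $k \geq 3$; with that step in hand, the rest is elementary bookkeeping.
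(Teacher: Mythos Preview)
Your reduction to minimizing $\sum_e a_e^2$ over $\mathbb{U}_{m,g}$ is correct and cleaner than what the paper does: you rightly observe that all $U\in\mathbb{U}_{m,g}$ share the same degree sequence, that $\mathcal{K}_{k+1}(U)=\emptyset$ by linearity when $k\ge 3$, and your Veblen-vanishing argument for $\mathrm{Tr}_{k+2}(\mathcal{A}(U))=0$ goes through (every connected simple subhypergraph $G$ with $|E(G)|\ge 2$ has at least two hyperedges containing a $G$-degree-$1$ vertex, forcing at least two multiplicities $\ge k$). The algebra giving $S(U)=\tfrac12 m(k+2)(k+3)+\tfrac12\sum_e a_e^2$ is also fine.

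The gap is in your final bookkeeping. The identity $\sum_e a_e^2=4m+t+\sum_{e\in\text{cycle}}b_e^2$ is only valid when every ``branch'' hanging off the cycle is itself a simple hyperpath (so that its contribution is $4\ell-3$). But for $k\ge 3$ this does \emph{not} exhaust $\mathbb{U}_{m,g}$: a single hyperedge can be adjacent to up to $k$ other hyperedges, one at each of its vertices, while every vertex still has degree $\le 2$. Thus a branch can be a genuine hypertree (a caterpillar, a star, etc.), and for such branches your formula fails. For instance, a branch consisting of one hyperedge with three further pendant hyperedges attached at three of its cored vertices contributes $4^2+3\cdot 1^2=19$, not $4\cdot 4-3=13$.

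The paper closes exactly this gap by first invoking Lemma~\ref{4} (the second path-sliding transformation) inside $\mathbb{U}_{m,g}$: repeated application straightens every $U\in\mathbb{U}_{m,g}$ into some $U'$ in which each branch \emph{is} a hyperpath attached at a cored vertex of the cycle, with $U'\preceq_\alpha U$. Only then does it compare $\mathrm{Tr}_{k+2}$ among such $U'$. Your argument can be repaired in the same way, or alternatively you can bypass Lemma~\ref{4} entirely by noting that $(a_e)_{e\in E(U)}$ is precisely the degree sequence of the line graph $L(U)$, a connected unicyclic ordinary graph on $m$ vertices with girth $g$; minimizing $\sum_v\deg_{L(U)}(v)^2$ over all such graphs (not just ``cycle plus pendant paths'') gives $4m+2$ with unique minimizer $C_g+P_{m-g}$, which translates back to $C_g^{(k)}\cdot P_{m-g}^{(k)}$. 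Either fix completes your proof; as written, the last paragraph does not cover all of $\mathbb{U}_{m,g}$.
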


\begin{proof}
For any $k$-uniform linear unicyclic hypergraph $U\in\mathbb{U}_{m,g}$, by using the second path-sliding transformation repeatedly, $U$ can be changed into a $k$-uniform linear unicyclic hypergraph $U^{'}$ obtained by identifying the pendant vertices of some hyperpaths and some cored vertices of $C_{g}^{(k)}$.
From Lemma \ref{4}, we know that $U^{'}\prec_{\alpha}U$.
And $U^{'}$ and $C_{g}^{(k)}\cdot P_{m-g}^{(k)}$ have the same degree sequence, then the first $k+1$ orders $\mathcal{A}_{\alpha}$-spectral moments of $U^{'}$ and $C_{g}^{(k)}\cdot P_{m-g}^{(k)}$ are same.
Since $U^{'}$ might has more than one hyperedges with at least three vertices of degree 2,
from Theorem \ref{k+2},
it can be obtained that $\mathrm{Tr}_{k+2}(\mathcal{A}_{\alpha}(C_{g}^{(k)}\cdot P_{m-g}^{(k)}))\le\mathrm{Tr}_{k+2}(\mathcal{A}_{\alpha}(U^{'}))$ with the equality if and only if $U^{'}\cong C_{g}^{(k)}\cdot P_{m-g}^{(k)}$. Hence, $C_{g}^{(k)}\cdot P_{m-g}^{(k)}\preceq_{\alpha}U$ for any $U\in\mathbb{U}_{m,g}$ with the equality if and only if $U\cong C_{g}^{(k)}\cdot P_{m-g}^{(k)}$. It follows that $C_{g}^{(k)}\cdot P_{m-g}^{(k)}$ is the first hypergraph in $\mathbb{U}_{m,g}$. From Lemma \ref{Uu}, we know that $C_{g}^{(k)}\cdot P_{m-g}^{(k)}$ is the first hypergraph in $S_{\alpha}$-order among all $k$-uniform linear unicyclic hypergraphs with $m$ hyperedges and girth $g$.
\end{proof}

\begin{thm}\label{firstcycle}
Among all $k$-uniform linear unicyclic hypergraphs with $m\ge3$ hyperedges, the first hypergraph in $S_{\alpha}$-order is the hypercycle $C_{m}^{(k)}$ for $k\ge3$ and $0<\alpha<1$.
\end{thm}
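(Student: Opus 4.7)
The plan is to combine Theorem \ref{cgpn-g} with a direct comparison at order $k+2$. By Theorem \ref{cgpn-g}, among all $k$-uniform linear unicyclic hypergraphs with $m$ hyperedges and fixed girth $g\in\{3,\ldots,m\}$, the first in $S_{\alpha}$-order is $C_g^{(k)}\cdot P_{m-g}^{(k)}$, and $C_m^{(k)}$ coincides with the case $g=m$ (i.e.\ $C_m^{(k)}\cdot P_0^{(k)}$). Hence it suffices to establish $C_m^{(k)}\prec_{\alpha}C_g^{(k)}\cdot P_{m-g}^{(k)}$ for every $3\le g\le m-1$, with $k\ge 3$ and $0<\alpha<1$.

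First I would verify that both hypergraphs share the same degree sequence, namely $m$ vertices of degree $2$ and $m(k-2)$ vertices of degree $1$: in $C_g^{(k)}\cdot P_{m-g}^{(k)}$ the identification vertex becomes a new degree-$2$ vertex, exactly compensating the two degree-$1$ vertices (a cored vertex of the cycle and a pendant of the hyperpath) that are merged. Since both hypergraphs are linear and $k\ge 3$, the set $\mathcal{K}_{k+1}$ is empty in both, and both have $m$ hyperedges. Corollary \ref{firstk+1} then yields $\mathrm{Tr}_t(\mathcal{A}_{\alpha}(C_m^{(k)}))=\mathrm{Tr}_t(\mathcal{A}_{\alpha}(C_g^{(k)}\cdot P_{m-g}^{(k)}))$ for $t=0,1,\ldots,k+1$, so the order must be decided at $t=k+2$.

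At order $k+2$ I invoke Theorem \ref{k+2}. The summand $\phi(k+2)$ agrees and the $\mathcal{K}_{k+1}$-summand vanishes for both. I would also argue that $\mathrm{Tr}_{k+2}(\mathcal{A}(\mathcal{H}))$ vanishes for both hypergraphs: in a linear $k$-uniform hypergraph with $k\ge 3$, any connected Veblen sub-multi-hypergraph forces each vertex-degree to be a positive multiple of $k$, and a short case analysis on the number of distinct underlying hyperedges $s$ with multiplicities $a_1+\cdots+a_s=k+2$ shows this is unattainable. What remains is the edge-degree sum $\sum_{e\in E}\bigl(\sum_{\{i,j\}\subset e}d_id_j+\sum_{i\in e}d_i^2\bigr)$. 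Setting $f(a)=a+k+2ak+\binom{k-a}{2}$ for the per-edge contribution of a hyperedge with $a$ vertices of degree $2$, every one of the $m$ hyperedges of $C_m^{(k)}$ is of type $a=2$, whereas those of $C_g^{(k)}\cdot P_{m-g}^{(k)}$ split into $m-2$ of type $a=2$, exactly one of type $a=3$ (the cycle hyperedge incident with the identification vertex), and exactly one of type $a=1$ (the pendant hyperedge at the far end of the hyperpath). The difference thus reduces to the second finite difference
\[
f(1)+f(3)-2f(2)=\binom{k-1}{2}-2\binom{k-2}{2}+\binom{k-3}{2}=1>0.
\]
Multiplied by the positive coefficient $(k+2)(k-1)^{n-k}k^{k-2}\alpha^2(1-\alpha)^k$ from Theorem \ref{k+2}, this forces $\mathrm{Tr}_{k+2}(\mathcal{A}_{\alpha}(C_m^{(k)}))<\mathrm{Tr}_{k+2}(\mathcal{A}_{\alpha}(C_g^{(k)}\cdot P_{m-g}^{(k)}))$, yielding $C_m^{(k)}\prec_{\alpha}C_g^{(k)}\cdot P_{m-g}^{(k)}$ and, together with Theorem \ref{cgpn-g}, placing $C_m^{(k)}$ first in $S_{\alpha}$-order.

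The hardest step will be the vanishing of $\mathrm{Tr}_{k+2}(\mathcal{A}(\mathcal{H}))$: one must check that for every $s\ge 1$ and every multiplicity pattern $a_1+\cdots+a_s=k+2$ satisfying connectedness and linearity, the divisibility constraint at every vertex fails. A minor bookkeeping point is the degenerate sub-case $m-g=1$, where the attached hyperpath is a single hyperedge; one should confirm that it still counts as a type-$a=1$ hyperedge in the tally (only its identification end has degree $2$), so that the edge-type tally of $C_g^{(k)}\cdot P_{m-g}^{(k)}$ remains $(m-2,1,1)$ uniformly for all $3\le g\le m-1$.
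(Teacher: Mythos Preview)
Your proposal is correct and follows essentially the same route as the paper: reduce via Theorem~\ref{cgpn-g} to the family $\{C_g^{(k)}\cdot P_{m-g}^{(k)}\}$, observe that the first $k+1$ traces agree, and then decide the order at $t=k+2$ using Theorem~\ref{k+2}. Your write-up is in fact more explicit than the paper's --- you justify carefully why $\mathrm{Tr}_{k+2}(\mathcal{A}(\cdot))$ and the $\mathcal{K}_{k+1}$ term vanish (the vertex-count bound $|V|\ge s(k-1)+1\le k+2$ forces $s\le 2$ in any linear Veblen infragraph), and your second-difference computation $f(1)+f(3)-2f(2)=1$ recovers exactly the paper's stated value $-(k+2)(k-1)^{m(k-1)-k}k^{k-2}\alpha^2(1-\alpha)^k$ for the trace difference.
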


\begin{proof}
Let $\mathbb{U}_{m}=\{C_{g}^{(k)}\cdot P_{m-g}^{(k)}|\ 3\le g\le m\}$.
From Theorem \ref{cgpn-g}, we know that $C_{g}^{(k)}\cdot P_{m-g}^{(k)}$ is the first hypergraph among all $k$-uniform linear unicyclic hypergraphs with $m$ hyperedges and girth $g$.
Then the first hypergraph in $S_{\alpha}$-order among all $k$-uniform linear unicyclic hypergraphs with $m$ hyperedges is the first in $\mathbb{U}_{m}$.
It is obvious that the $\mathcal{A}_{\alpha}$-spectral moments $\mathrm{Tr}_0(\mathcal{A}_{\alpha}),\mathrm{Tr}_1(\mathcal{A}_{\alpha}),\ldots,\mathrm{Tr}_{k+1}(\mathcal{A}_{\alpha})$ are same for all hypergraphs in $\mathbb{U}_m$.
When $3\le g\le m-1$ and $0<\alpha<1$, from Theorem \ref{k+2}, we have
$$\mathrm{Tr}_{k+2}(\mathcal{A}_{\alpha}(C_{m}^{(k)}))-\mathrm{Tr}_{k+2}(\mathcal{A}_{\alpha}(C_{g}^{(k)}\cdot P_{m-g}^{(k)}))=-(k+2)(k-1)^{m(k-1)-k}k^{k-2}\alpha^2(1-\alpha)^k<0,$$
i.e., $\mathrm{Tr}_{k+2}(\mathcal{A}_{\alpha}(C_{m}^{(k)}))<\mathrm{Tr}_{k+2}(\mathcal{A}_{\alpha}(C_{g}^{(k)}\cdot P_{m-g}^{(k)}))$ for $3\le g\le m-1$. Then $C_{m}^{(k)}\prec_{\alpha}U$ for all $U\in \mathbb{U}_{m}\setminus\{C_{m}^{(k)}\}$.
Hence, $C_{m}^{(k)}$ is the first hypergraph in $S_{\alpha}$-order among all $k$-uniform linear unicyclic hypergraphs with $m$ hyperedges.
\end{proof}

\begin{thm}
Among all $k$-uniform linear unicyclic hypergraphs with $m\ge3$ hyperedges, the second hypergraph in $S_{\alpha}$-order is $C_{m-1}^{(k)}\cdot P_{1}^{(k)}$ for $k\ge3$ and $0<\alpha<1$.
\end{thm}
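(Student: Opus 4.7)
The plan is to reduce the search via Theorems~\ref{cgpn-g} and~\ref{firstcycle} and then to compare within the remaining family. By Theorem~\ref{cgpn-g}, any $k$-uniform linear unicyclic hypergraph with $m$ hyperedges and girth $g$ is $S_{\alpha}$-preceded by $C_{g}^{(k)} \cdot P_{m-g}^{(k)}$, and by Theorem~\ref{firstcycle} the overall $S_{\alpha}$-first hypergraph is $C_{m}^{(k)}$. Consequently the second hypergraph in $S_{\alpha}$-order must be the $S_{\alpha}$-minimum of $\mathbb{U}_{m}\setminus\{C_{m}^{(k)}\}=\{C_{g}^{(k)} \cdot P_{m-g}^{(k)}:3\le g\le m-1\}$. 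Every hypergraph in this family has the same degree sequence ($m$ vertices of degree $2$ and $m(k-2)$ of degree $1$), and the calculation in the proof of Theorem~\ref{firstcycle} also gives the same $(k+2)$-nd $\mathcal{A}_{\alpha}$-spectral moment for all of them; so the $S_{\alpha}$-order among these tadpoles is governed entirely by higher-order Veblen contributions in Theorem~\ref{T}.

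In a $k$-uniform linear unicyclic hypergraph every connected Veblen infragraph is an edge-multiplicity-$k$ copy of a connected subhypergraph whose support is either a hyperpath or the unique hypercycle. A direct tadpole enumeration (splitting hyperpaths into those lying fully in the cycle, fully in the attached path, or straddling the identified vertex) shows that the number of $\ell$-hyperpath subhypergraphs of $C_{g}^{(k)} \cdot P_{m-g}^{(k)}$ equals $m+\ell-2$ for $2\le\ell\le\min(g-1,m-g)$, with explicit boundary formulas in terms of $g$ and $m-g$ beyond that range. Comparing $C_{m-1}^{(k)} \cdot P_{1}^{(k)}$ with $C_{g}^{(k)} \cdot P_{m-g}^{(k)}$ for $3\le g\le m-2$, the $\ell$-hyperpath counts agree for every $\ell\le 3$, and no hypercycle of length $\le 3$ lives inside $C_{m-1}^{(k)} \cdot P_{1}^{(k)}$ for $g\ge 4$; this forces the first few $\mathcal{A}_{\alpha}$-moments to agree.

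For $g\ge 5$, the first differing trace turns out to be $\mathrm{Tr}_{4k}$: the enumeration yields exactly one more $4$-hyperpath support in $C_{g}^{(k)} \cdot P_{m-g}^{(k)}$ and no compensating $4$-hypercycle (its unique cycle has length $g\ge 5$), so Lemma~13 in \cite{ref10} together with Theorem~\ref{T} gives
$$\mathrm{Tr}_{4k}(\mathcal{A}_{\alpha}(C_{m-1}^{(k)} \cdot P_{1}^{(k)}))<\mathrm{Tr}_{4k}(\mathcal{A}_{\alpha}(C_{g}^{(k)} \cdot P_{m-g}^{(k)}))$$
for $0<\alpha<1$, whence $C_{m-1}^{(k)} \cdot P_{1}^{(k)}\prec_{\alpha}C_{g}^{(k)} \cdot P_{m-g}^{(k)}$. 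The cases $g=3$ and $g=4$ are handled separately: a $g$-hypercycle Veblen appears at order $gk$ together with a deficit of $g$-hyperpath supports, and the required strict inequality is checked by an explicit comparison of the Clark--Cooper coefficients of $k\cdot C_{g}^{(k)}$ and $k\cdot P_{g}^{(k)}$. Combining the three cases gives $C_{m-1}^{(k)} \cdot P_{1}^{(k)}\prec_{\alpha}C_{g}^{(k)} \cdot P_{m-g}^{(k)}$ for every $g\in\{3,\ldots,m-2\}$, identifying $C_{m-1}^{(k)} \cdot P_{1}^{(k)}$ as the second hypergraph in $S_{\alpha}$-order.

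The main obstacle is the careful combinatorial accounting of hyperpath subhypergraphs of a tadpole across all lengths and the clean reduction of the resulting count differences to trace differences via Lemma~13 in \cite{ref10}. The edge cases $g=3$ and $g=4$ are the most delicate, since there the hypercycle Veblen and the hyperpath-count deficit compete at the same trace order, so the sign of the trace difference is not immediate from the subhypergraph counts alone and requires the explicit Clark--Cooper coefficient evaluation.
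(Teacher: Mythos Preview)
Your reduction to the family $\mathbb{U}_{m}\setminus\{C_{m}^{(k)}\}=\{C_{g}^{(k)}\cdot P_{m-g}^{(k)}:3\le g\le m-1\}$ via Theorems~\ref{cgpn-g} and~\ref{firstcycle} is correct, and so is the observation that all members share the same degree sequence and the same $(k+2)$-nd $\mathcal{A}_{\alpha}$-moment. The gap is in the next step: you treat the higher moments as controlled by subgraph \emph{counts} alone (via Lemma~13 in \cite{ref10}), and this is what leads you to assert that the first differing trace is $\mathrm{Tr}_{4k}$. But the third summand in Theorem~\ref{T} involves $\pi_{f}(\mathcal{A}_{\alpha}(\mathcal{H}))$, which carries the degrees of the vertices lying in each Veblen infragraph, not merely the number of such infragraphs. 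Already at order $d=2k+2$ the relevant Veblen infragraph is $k\cdot P_{2}^{(k)}$ together with two diagonal elements, and the contribution depends on the degree configuration inside each copy of $P_{2}^{(k)}$. Although every tadpole in $\mathbb{U}_{m}\setminus\{C_{m}^{(k)}\}$ has exactly $m$ copies of $P_{2}^{(k)}$, the degree profiles on these copies differ: $C_{m-1}^{(k)}\cdot P_{1}^{(k)}$ has two $P_{2}$'s with four degree-$2$ vertices and $m-2$ with three, whereas every $C_{g}^{(k)}\cdot P_{m-g}^{(k)}$ with $g\le m-2$ has three $P_{2}$'s with four degree-$2$ vertices, $m-4$ with three, and one with two. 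Hence $\mathrm{Tr}_{2k+2}$ already separates $C_{m-1}^{(k)}\cdot P_{1}^{(k)}$ from the rest, contradicting your claim that the moments agree up to order $4k-1$.

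The paper's proof works precisely at order $2k+2$: it verifies equality of $\mathrm{Tr}_{d}(\mathcal{A}_{\alpha})$ for $d\le 2k+1$ (same degree sequence, same edge types, and the one-loop $P_{2}$-contributions at $d=2k+1$ cancel), then splits $f\in\mathcal{F}_{2k+2}(P_{2}^{(k)})$ into the four cases according to where the two diagonal indices sit and computes the resulting difference to be strictly negative, uniformly in $g\le m-2$. Your route, beyond relying on an equality that fails, also leaves the cases $g=3,4$ as unfinished ``explicit Clark--Cooper coefficient'' checks at order $gk$; the paper's $\omega_{3}$-based argument handles all $g$ at once with no case split.
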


\begin{proof}
From Theorem \ref{cgpn-g} and Theorem \ref{firstcycle}, we know that the second hypergraph in $S_{\alpha}$-order among all $k$-uniform linear unicyclic hypergraphs with $m$ hyperedges is the first in $\mathbb{U}_{m}\setminus\{C_{m}^{(k)}\}$.
For any $U_1, U_2\in\mathbb{U}_{m}\setminus\{C_{m}^{(k)}\}$ and $d=1,\ldots,2k+1$, we can know that $\mathcal{V}_{d}(U_1)=\mathcal{V}_{d}(U_2)$ and $\pi_{f}(\mathcal{A}_{\alpha}(U_1))=\pi_{f}(\mathcal{A}_{\alpha}(U_2))$ for any $f\in\mathcal{F}_{d}(H)$ and $H\in\mathcal{V}_{d-1}(U_1)$.
The first $2k+1$ orders spectral moments and the degree sequences of all hypergraphs in $\mathbb{U}_{m}\setminus\{C_{m}^{(k)}\}$ are same.
From Theorem \ref{T}, we know that $\mathrm{Tr}_{d}(\mathcal{A}_{\alpha}(U_1))=\mathrm{Tr}_{d}(\mathcal{A}_{\alpha}(U_2))$ for any $U_1, U_2\in\mathbb{U}_{m}\setminus\{C_{m}^{(k)}\}$ and $d\in[2k+1]$.

Next, we compare the $2k+2$-nd order $\mathcal{A}_{\alpha}$-spectral moments of hypergraphs in $\mathbb{U}_{m}\setminus\{C_{m}^{(k)}\}$.
For any $U\in\mathbb{U}_{m}\setminus\{C_{m}^{(k)},C_{m-1}^{(k)}\cdot P_{1}^{(k)}\}$, we can see that $\mathrm{Tr}_{2k+2}(\mathcal{A}(U))=\mathrm{Tr}_{2k+2}(\mathcal{A}(C_{m-1}^{(k)}\cdot P_{1}^{(k)}))$.
Since $U$ and $C_{m-1}^{(k)}\cdot P_{1}^{(k)}$ have the same degree sequence and the differences between $\pi_{f}(\mathcal{A}_{\alpha}(U))$ and $\pi_{f}(\mathcal{A}_{\alpha}(C_{m-1}^{(k)}\cdot P_{1}^{(k)}))$ are the terms obtained from $f\in\mathcal{F}_{2k+2}(P_2^{(k)})$, from Theorem \ref{T}, we have
\begin{align*}
&\mathrm{Tr}_{2k+2}(\mathcal{A}_{\alpha}(C_{m-1}^{(k)}\cdot P_{1}^{(k)}))-\mathrm{Tr}_{2k+2}(\mathcal{A}_{\alpha}(U))\\
=&(2k+2)(k-1)^{m(k-1)}\sum\limits_{f\in\mathcal{F}_{2k+2}(P_2^{(k)})}\left( \frac{\tau(f)\pi_{f}(\mathcal{A}_{\alpha}(C_{m-1}^{(k)}\cdot P_{1}^{(k)}))}{\prod\limits_{v\in V(f)}d^{+}(v)} - \frac{\tau(f)\pi_{f}(\mathcal{A}_{\alpha}(U))}{\prod\limits_{v\in V(f)}d^{+}(v)} \right).
\end{align*}

Given a $k$-uniform hypergraph $\mathcal{H}$, for $P_{2}^{(k)}\subset \mathcal{H}$ with the vertex set $V(P_{2}^{(k)})=\{i_1,\ldots,i_{2k-1}\}$ and the hyperedge set $E(P_2^{(k)})=\{ e_1, e_2 \}$, where $i_1<\cdots<i_{2k-1}$, $e_1=\{i_1,\ldots,i_k\}$ and $e_2=\{i_k,\ldots,i_{2k-1}\}$.
Let $\alpha_j$ be a permutation of $e_1 \setminus\{i_j\}$ for $j=1,\ldots,k-1$ and $\alpha_l$ be a permutation of $e_2 \setminus\{i_l\}$ for $l=k+1,\ldots,2k-1$.
Let $\alpha_k$ be a permutation of $e_1 \setminus\{i_k\}$ and $\alpha_{k+1}$ be a permutation of $e_2 \setminus\{i_k\}$.
Let $\beta_j=i_j\cdots i_j\in[N]^{k-1}$ for $j=1,\ldots,2k-1$, where $N=m(k-1)$.
The elements in $\mathcal{F}_{2k+2}(P_{2}^{(k)})$ have the following four cases.\\
\textbf{Case 1.} Let $f_1=(i_1\alpha_1,\ldots,i_k\alpha_k,i_k\beta_k,i_k\beta_k,i_k\alpha_{k+1},\ldots,i_{2k-1}\alpha_{2k-1})$.
Then $$\pi_{f_1}(\mathcal{A}_{\alpha}(\mathcal{H}))=\alpha^2(1-\alpha)^{2k}\left( \frac{1}{(k-1)!} \right)^{2k}d_{i_k}^{2},~ \sum_{v\in V(f_1)}d_v^{+}=4(k-1)^{2k-1}.$$
And there are $12((k-1)!)^{2k}$ elements in $\mathcal{F}_{2k+2}(P_{2}^{(k)})$ that have the same induced multi-digraph as $f_1$.\\
\textbf{Case 2.} Let $f_2=(i_1\alpha_1,\ldots,i_j\alpha_j,i_j\beta_j,i_j\beta_j,i_{j+1}\alpha_{j+1},\ldots,i_k\alpha_k,i_k\alpha_{k+1},\ldots,i_{2k-1}\alpha_{2k-1})$, where $j\in[2k-1]\setminus\{k\}$. Then $$\pi_{f_2}(\mathcal{A}_{\alpha}(\mathcal{H}))=\alpha^2(1-\alpha)^{2k}\left( \frac{1}{(k-1)!} \right)^{2k}d_{i_j}^{2},~ \sum_{v\in V(f_2)}d_v^{+}=6(k-1)^{2k-1}.$$
And there are $6((k-1)!)^{2k}$ elements in $\mathcal{F}_{2k+2}(P_{2}^{(k)})$ that have the same induced multi-digraph as $f_2$.\\
\textbf{Case 3.} Let $f_3=(i_1\alpha_1,\ldots,i_j\alpha_j,i_j\beta_j,i_{j+1}\alpha_{j+1},\ldots,i_k\alpha_k,i_k\beta_k,i_k\alpha_{k+1},\ldots,i_{2k-1}\alpha_{2k-1})$, where $j\in[2k-1]\setminus\{k\}$. Then $$\pi_{f_3}(\mathcal{A}_{\alpha}(\mathcal{H}))=\alpha^2(1-\alpha)^{2k}\left( \frac{1}{(k-1)!} \right)^{2k}d_{i_j}d_{i_k},~ \sum_{v\in V(f_3)}d_v^{+}=6(k-1)^{2k-1}.$$
And there are $12((k-1)!)^{2k}$ elements in $\mathcal{F}_{2k+2}(P_{2}^{(k)})$ that have the same induced multi-digraph as $f_3$.\\
\textbf{Case 4.} Let $f_4=(i_1\alpha_1,\ldots,i_j\alpha_j,i_j\beta_j,i_{j+1}\alpha_{j+1},\ldots,i_l\alpha_l,i_l\beta_l,i_{l+1}\alpha_{l+1},\ldots,i_k\alpha_k, i_k\alpha_{k+1},\\ \ldots,i_{2k-1}\alpha_{2k-1})$, where $j\ne l\in[2k-1]\setminus\{k\}$. Then $$\pi_{f_4}(\mathcal{A}_{\alpha}(\mathcal{H}))=\alpha^2(1-\alpha)^{2k}\left( \frac{1}{(k-1)!} \right)^{2k}d_{i_j}d_{i_l},~ \sum_{v\in V(f_4)}d_v^{+}=8(k-1)^{2k-1}.$$
And there are $8((k-1)!)^{2k}$ elements in $\mathcal{F}_{2k+2}(P_{2}^{(k)})$ that have the same induced multi-digraph as $f_4$.

It can be seen that the digraph obtained by removing all loops of $D_{f_i}$ is the digraph obtained by intersecting two complete digraphs with $k$ vertices at a common vertex, then $\tau(f_i)=k^{2k-4}$ for $i\in[4]$.

For any $U\in\mathbb{U}_{m}\setminus\{C_{m}^{(k)},C_{m-1}^{(k)}\cdot P_{1}^{(k)}\}$, by the vertex-hyperedge incidence relations of hypergraphs $C_{m-1}^{(k)}\cdot P_{1}^{(k)}$ and $U$, from the above statements,
when $0<\alpha<1$, we have
\begin{align*}
&\sum\limits_{f\in\mathcal{F}_{2k+2}(P_2^{(k)})}\left( \frac{\tau(f)\pi_{f}(\mathcal{A}_{\alpha}(C_{m-1}^{(k)}\cdot P_{1}^{(k)}))}{\prod\limits_{v\in V(f)}d^{+}(v)} - \frac{\tau(f)\pi_{f}(\mathcal{A}_{\alpha}(U))}{\prod\limits_{v\in V(f)}d^{+}(v)} \right).\\
=&(k-1)^{1-2k}(k-1)^{2k-4} \alpha^{2}(1-\alpha)^{2k} \Bigg( \sum\limits_{P_{2}^{k}\subset C_{m-1}^{(k)}\cdot P_{1}^{(k)}}\bigg( \sum\limits_{i\in V(P_{2}^{(k)})}d_i^2+\sum\limits_{i,j\in V(P_{2}^{(k)})}d_i d_j \bigg)\\
&-\sum\limits_{P_{2}^{k}\subset U}\bigg( \sum\limits_{i\in V(P_{2}^{(k)})}d_i^2+\sum\limits_{i,j\in V(P_{2}^{(k)})}d_i d_j \bigg) \Bigg)\\
=&(k-1)^{1-2k}(k-1)^{2k-4} \alpha^{2}(1-\alpha)^{2k} \Bigg( \left( 2\binom{2k-4}{2}+14(2k-4)+48 \right)\\
&-\left( \binom{2k-5}{2}+\binom{2k-3}{2}+9(2k-5)+5(2k-3)+52 \right) \Bigg)\\
=&-(k-1)^{1-2k}(k-1)^{2k-4}\alpha^{2}(1-\alpha)^{2k}<0.
\end{align*}
Thus, $\mathrm{Tr}_{2k+2}(\mathcal{A}_{\alpha}(C_{m-1}^{(k)}\cdot P_{1}^{(k)}))<\mathrm{Tr}_{2k+2}(\mathcal{A}_{\alpha}(U))$ for any $U\in\mathbb{U}_{m}\setminus\{C_{m}^{(k)}\}$ with $U\neq C_{m-1}^{(k)}\cdot P_{1}^{(k)}$.
It follows that $C_{m-1}^{(k)}\cdot P_{1}^{(k)}\preceq_{\alpha}U$ for all $U\in\mathbb{U}_{m}\setminus\{C_{m}^{(k)}\}$ with the equality if and only if $U\cong C_{m-1}^{(k)}\cdot P_{1}^{(k)}$.
Then $C_{m-1}^{(k)}\cdot P_{1}^{(k)}$ is the first hypergraph in $\mathbb{U}_{m}\setminus\{C_{m}^{(k)}\}$.
Therefore, $C_{m-1}^{(k)}\cdot P_{1}^{(k)}$ is the second hypergraph in $S_{\alpha}$-order among all $k$-uniform linear unicyclic hypergraphs with $m$ hyperedges.
\end{proof}

\section{The $S_{\alpha}$-order in hypertrees}
In this section, we give the first, the second, the last and the second last hypergraphs in $S_{\alpha}$-order among all $k$-uniform hypertrees.
And the last hypergraph among all $k$-uniform hypertrees with given diameter is characterized.

Let $\mathcal{T}_{m}$ be the set of all $k$-uniform hypertrees with $m$ hyperedges.
Let $\mathbb{T}_{m}$ be the set of the hypertrees in $\mathcal{T}_{m}$ satisfies the degree of each vertex is at most $2$.
Then we have the following conclusion.

\begin{lem}\label{Tt}
For any $T\in\mathbb{T}_m$ and $T^{'}\in\mathcal{T}_{m}\setminus\mathbb{T}_{m}$, $T\prec_{\alpha}T^{'}$ for $0<\alpha<1$.
\end{lem}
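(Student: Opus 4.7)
The plan is to mirror the strategy of Lemma \ref{Uu}. Starting from any $T^{'}\in\mathcal{T}_{m}\setminus\mathbb{T}_{m}$, which by definition contains a vertex of degree at least $3$, I would iteratively apply transformations that strictly decrease the hypergraph in $S_{\alpha}$-order, eventually reaching some hypertree in $\mathbb{T}_{m}$. The two tools I would use are the inverse $\sigma$-transformation (Lemma \ref{1}) and the first path-sliding transformation (Lemma \ref{3}), each of which produces a hypergraph strictly preceding the original in $S_{\alpha}$-order whenever it applies. Since the underlying inequalities in those lemmas hold for $0<\alpha<1$, the same restriction on $\alpha$ is inherited here.

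Concretely, at each stage I would choose a vertex $u$ of the current hypertree with $d(u)\ge 3$ and inspect the sub-hypertrees hanging off $u$. If some pendant hyperedge is incident to $u$, I would apply the inverse $\sigma$-transformation by moving that pendant hyperedge to another vertex $w$ of sufficiently small degree; because the hypertree has cored vertices along every branch, such a $w$ can always be found so that the hypothesis of Lemma \ref{1} is satisfied with the roles of $u$ and $w$ reversed, yielding a hypertree strictly earlier in $S_{\alpha}$-order.

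If $u$ has no attached pendant hyperedge, then at least three hyperedges at $u$ extend into longer sub-hypertrees. In that case I would pick a branch $B_{1}$ at $u$ that begins with a hyperpath and a sub-hypertree $\mathcal{H}_{0}$ attached at $u$ from another branch, and then slide $\mathcal{H}_{0}$ from the non-cored vertex $u$ to a cored vertex at the end of $B_{1}$. By Lemma \ref{3} this move decreases the $S_{\alpha}$-order and strictly reduces $d(u)$ by one. Iterating these two kinds of moves, at each step either the maximum degree of the hypertree or the number of vertices realizing that maximum degree strictly decreases, so the procedure terminates at some $T\in\mathbb{T}_{m}$. Chaining the strict $S_{\alpha}$-order inequalities produced along the way gives $T\prec_{\alpha}T^{'}$.

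The main obstacle I anticipate is the bookkeeping needed to show that some applicable transformation always exists while the maximum degree is at least $3$. The hypertree structure is essential: every branch at $u$ terminates in cored vertices, which guarantees both the target vertex for the inverse $\sigma$-move and the cored endpoint required to receive $\mathcal{H}_{0}$ in the first path-sliding move. Once this existence claim is formalized, the rest of the argument is a direct concatenation of Lemmas \ref{1} and \ref{3}, exactly parallel to the proof of Lemma \ref{Uu}.
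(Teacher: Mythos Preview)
Your proposal is correct and follows essentially the same approach as the paper: the paper's proof is a two-sentence argument that repeatedly applies the inverse $\sigma$-transformation (Lemma~\ref{1}) and the first path-sliding transformation (Lemma~\ref{3}) to reduce $T'$ to some hypertree in $\mathbb{T}_m$, exactly as in Lemma~\ref{Uu}. Your write-up simply supplies the termination and applicability details that the paper leaves implicit.
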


\begin{proof}
For any hypertree $T^{'}\in\mathcal{T}_{m}\setminus\mathbb{T}_{m}$, by using the inverse $\sigma$-transformation
and the first path-sliding transformation repeatedly, we can change $T^{'}$ into a $k$-uniform hypertree $T$ in $\mathbb{T}_m$.
From Lemma \ref{1} and Lemma \ref{3},
we know that $T\prec_{\alpha}T^{'}$.
\end{proof}

\begin{thm}\label{firsttree}
Among all $k$-uniform hypertrees with $m$ hyperedges, the first hypertree in $S_{\alpha}$-order is the hyperpath $P_m^{(k)}$ for $k\ge3$ and $0<\alpha<1$.
\end{thm}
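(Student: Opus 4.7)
The plan is to combine Lemma \ref{Tt} with a structural observation that the auxiliary set $\mathbb{T}_m$ contains only a single hypertree, namely the hyperpath $P_m^{(k)}$.

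First, I would establish the structural claim that any $k$-uniform hypertree $T$ with $m$ hyperedges in which every vertex has degree at most $2$ must be isomorphic to $P_m^{(k)}$. The argument is short: in a hypertree, the incidence bipartite graph between vertices and hyperedges is a tree, in which each vertex-node has degree equal to its degree in $\mathcal{H}$ and each hyperedge-node has degree $k$. If every vertex has degree at most $2$, then every internal vertex-node in this incidence tree has degree exactly $2$, so contracting along such vertex-nodes leaves a tree on the hyperedge-nodes in which each node has at most two neighbors. A tree with maximum degree $2$ is a path, and this is exactly the arrangement of hyperedges in the hyperpath $P_m^{(k)}$. Alternatively, one can proceed by induction on $m$: pick a pendant hyperedge, remove it, apply the inductive hypothesis to obtain a hyperpath, and re-attach the pendant hyperedge at one of its endpoints (the only attachment point compatible with every vertex having degree at most~$2$). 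Either way, $\mathbb{T}_m = \{P_m^{(k)}\}$.

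Second, I would apply Lemma \ref{Tt}: for any $T' \in \mathcal{T}_m \setminus \mathbb{T}_m$ and any $T \in \mathbb{T}_m$, one has $T \prec_{\alpha} T'$ when $0 < \alpha < 1$. Combining this with the structural claim, we obtain $P_m^{(k)} \prec_{\alpha} T'$ for every $T' \in \mathcal{T}_m$ with $T' \not\cong P_m^{(k)}$. Therefore $P_m^{(k)}$ is the first hypertree in $S_\alpha$-order among all $k$-uniform hypertrees with $m$ hyperedges, as required.

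There is essentially no obstacle to this argument: the heavy lifting (that pushing mass into the high-degree vertex strictly increases the $S_\alpha$-order, and that sliding attached subhypergraphs toward a cored vertex strictly decreases it) has already been done in Lemmas \ref{1} and \ref{3}, which are packaged together in Lemma \ref{Tt}. The only step requiring a brief justification is the elementary structural fact that a hypertree of maximum degree $2$ is a hyperpath.
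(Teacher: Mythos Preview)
Your structural claim that $\mathbb{T}_m = \{P_m^{(k)}\}$ is false for $k \geq 3$. For a concrete counterexample with $k = 3$ and $m = 4$, take a central hyperedge $e_1 = \{a,b,c\}$ and attach a pendant hyperedge at each of $a$, $b$, $c$; every vertex has degree at most $2$, yet $e_1$ meets three other hyperedges, so the hypertree is not a hyperpath. Both of your arguments share the same oversight. In the incidence-tree argument, a hyperedge-node has degree $k$, so after contracting the degree-$2$ vertex-nodes it can end up with as many as $k$ neighbours among the hyperedge-nodes, not at most two. In the inductive argument, a pendant hyperedge may be re-attached at any cored vertex $w_{i,j}$ of the smaller hyperpath (there are $k-2 \geq 1$ such vertices on every hyperedge when $k \geq 3$), not only at an end vertex $u_0$ or $u_m$. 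Indeed the hypertree $F_{m,k}$ of Theorem~\ref{secondtree} is precisely such an element of $\mathbb{T}_m \setminus \{P_m^{(k)}\}$.

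The paper closes this gap with Lemma~\ref{4} (the second path-sliding transformation): for any $T \in \mathbb{T}_m$ with $T \neq P_m^{(k)}$, one repeatedly slides the branch attached at an interior cored vertex toward an end of the path, each such move strictly decreasing the $S_\alpha$-order, until $P_m^{(k)}$ is reached. This extra lemma is essential; Lemma~\ref{Tt} by itself does not finish the proof.
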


\begin{proof}
From Lemma \ref{Tt}, the first hypertree in $\mathcal{T}_{m}$ is the first in $\mathbb{T}_{m}$.
For any hypertree $T\neq P_{m}^{(k)}$ in $\mathbb{T}_m$, by using the second path-sliding transformation repeatedly, $T$ can be changed into $P_m^{(k)}$.
From Lemma \ref{4}, we know that $P_{m}^{(k)}\prec_{\alpha}T$ for any $T\in\mathbb{T}_{m}\setminus\{P_{m}^{(k)}\}$.
Then $P_m^{(k)}$ is the first hypergraph in $S_{\alpha}$-order among all $k$-uniform hypertrees with $m$ hyperedges.
\end{proof}

Let $P_{m-1}$ be the path of length $m-1$ with the vertex set $V(P_{m-1})=\{v_1,\ldots,v_{m}\}$ and the edge set $E(P_{m-1})=\{e_i=\{v_i,v_{i+1}\}|\ i=1,\ldots,m-1\}$.
We form a $k$-uniform hypertree $F_{m,k}$ obtained from the hyperpath $P_{m-1}^{(k)}$ by attaching a pendant hyperedge to a cored vertex on the hyperedge $e_2^{(k)}$.

\begin{thm}\label{secondtree}
Among all $k$-uniform hypertrees with $m$ hyperedges, the second hypertree in $S_{\alpha}$-order is $F_{m,k}$ for $k\ge3$ and $0<\alpha<1$.
\end{thm}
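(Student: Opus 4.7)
The plan is to reduce the problem to $\mathbb{T}_m$ and then handle two geometric cases: ``non-Y'' hypertrees, dispatched by comparing $\mathrm{Tr}_{k+2}$, and ``Y-tree'' hypertrees, dispatched by iterated application of Lemma \ref{4}. First, Theorem \ref{firsttree} identifies $P_m^{(k)}$ as the unique minimum in $\mathcal{T}_m$, and Lemma \ref{Tt} forces every $T' \in \mathcal{T}_m \setminus \mathbb{T}_m$ to satisfy $F_{m,k} \prec_\alpha T'$ because $F_{m,k} \in \mathbb{T}_m$. Hence it suffices to prove $F_{m,k} \prec_\alpha T$ for every $T \in \mathbb{T}_m \setminus \{P_m^{(k)}, F_{m,k}\}$.

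Every $T \in \mathbb{T}_m$ has the same degree sequence (exactly $m-1$ vertices of degree $2$ and $m(k-2)+2$ of degree $1$), so by Corollary \ref{firstk+1} the moments $\mathrm{Tr}_d(\mathcal{A}_\alpha(T))$ agree on $\mathbb{T}_m$ for $d \leq k+1$. At $d = k+2$, hypertrees contain no $\mathcal{K}_{k+1}$ and no Veblen infragraph with exactly $k+2$ hyperedges, so Theorem \ref{k+2} reduces to $\mathrm{Tr}_{k+2}(\mathcal{A}_\alpha(T)) = \phi(k+2) + C\,\alpha^2(1-\alpha)^k\, S(T)$ for a positive constant $C$ depending only on $k$ and $n$, where $S(T) = \sum_{e\in E(T)}\bigl(\sum_{\{i,j\}\subset e}d_id_j + \sum_{i\in e}d_i^2\bigr)$. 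Rewriting each hyperedge's contribution via its number of degree-$2$ vertices $a_e = \deg_{\mathrm{IT}}(e)$ in the intersection tree $\mathrm{IT}(T)$, a direct computation reduces $S(T)$ to an affine, strictly increasing function of $\sum_{v\in V(\mathrm{IT})}\deg(v)^2$. Elementary tree arithmetic then gives the values $4m-6$ (path), $4m-4$ (Y-tree, i.e., exactly one vertex of degree $3$ and three leaves), and $\geq 4m-2$ (all other trees on $m$ vertices), so any non-Y, non-path $T \in \mathbb{T}_m$ has strictly larger $\mathrm{Tr}_{k+2}$ than $F_{m,k}$, yielding $F_{m,k} \prec_\alpha T$.

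For the remaining Y-trees I parametrize $T_Y \neq F_{m,k}$ by its sorted arm lengths $(l_1, l_2, l_3)$ with $l_1\leq l_2 \leq l_3$ and $l_1 + l_2 + l_3 = m-1$, and reduce $T_Y$ to $F_{m,k}\leftrightarrow(1,1,m-3)$ by iterated applications of Lemma \ref{4}. If $l_2 \geq 2$, take the main hyperpath along the $l_2$- and $l_3$-arms (so $r = l_2 + l_3 + 1$) and treat the $l_1$-arm as $\mathcal{H}_0$ attached at position $l_2+1$; the positional hypothesis $l_2+1 \leq \lceil r/2 \rceil$ of Lemma \ref{4} collapses to the sorting inequality $l_2 \leq l_3$. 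Each application replaces $(l_1,l_2,l_3)$ by $(l_1,l_2-1,l_3+1)$ and strictly decreases the $S_\alpha$-order, so iterating ends at $(l_1,1,l_2+l_3-1)$, which equals $F_{m,k}$ when $l_1 = 1$. If instead $l_1 \geq 2$ at this stage, I would switch the main hyperpath to run through the $l_1$- and $l_3$-arms, using the length-$1$ middle arm as $\mathcal{H}_0$; the positional hypothesis now becomes $l_1 \leq l_3$, again automatic from the sorting. Iterating drives $l_1$ down to $1$ and terminates at $(1,1,m-3) = F_{m,k}$, each step strictly reducing the $S_\alpha$-order.

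The main technical obstacle will be the Y-tree stage: one must carefully track which arm plays the role of $\mathcal{H}_0$ and verify Lemma \ref{4}'s positional hypothesis at every step and after swapping the main hyperpath. The guiding principle---always pairing the mover with the longest remaining arm as the opposite end of the main hyperpath---makes the hypothesis reduce each time to the sorting inequality $l_i \leq l_j$ and keeps the bookkeeping manageable.
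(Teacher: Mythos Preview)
Your proof is correct and shares the paper's opening reduction to $\mathbb{T}_m$ via Lemma~\ref{Tt} and Theorem~\ref{firsttree}, but then takes a genuinely different route. The paper's own argument is a single sentence: it asserts that \emph{every} $T\in\mathbb{T}_m\setminus\{P_m^{(k)}\}$ can be transformed into $F_{m,k}$ by repeated application of Lemma~\ref{4}, without specifying how to choose the main hyperpath $P_r^{(k)}$ and the attached piece $\mathcal{H}_0$ at each step when the intersection tree has several branch points. You instead split $\mathbb{T}_m\setminus\{P_m^{(k)},F_{m,k}\}$ into two cases. For Y-tree intersection structures you carry out the Lemma~\ref{4} reduction explicitly, tracking arm lengths and verifying the positional constraint $2\le l\le\lceil r/2\rceil$ at every step (including after switching the main hyperpath); this makes rigorous precisely what the paper leaves implicit. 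For non-Y, non-path members of $\mathbb{T}_m$ you bypass Lemma~\ref{4} altogether and compare $\mathrm{Tr}_{k+2}$ directly: your observation that Theorem~\ref{k+2} collapses on $\mathbb{T}_m$ (no $\mathcal{K}_{k+1}$, no $(k+2)$-edge Veblen infragraph for $k\ge 3$) to an affine increasing function of $\sum_{e}\deg_{\mathrm{IT}}(e)^2$, together with the elementary tree fact that this sum is $4m-4$ for Y-trees and at least $4m-2$ otherwise, yields the strict inequality in one stroke. This second case is a cleaner alternative to the paper's uniform approach, as it sidesteps the bookkeeping of finding a valid path decomposition for hypertrees with complicated intersection trees; the trade-off is that you need the additional $\mathrm{Tr}_{k+2}$ computation, whereas the paper's version needs nothing beyond Lemma~\ref{4}.
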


\begin{proof}
From Lemma \ref{Tt} and Theorem \ref{firsttree}, we know that the second hypertree in $\mathcal{T}_m$ is the first in $\mathbb{T}_{m}\setminus\{P_m^{(k)}\}$. For any hypertree $T$ in $\mathbb{T}_m\setminus\{P_m^{(k)}\}$, repeating the second path-sliding transformation,
$T$ can be changed into $F_{m,k}$. From Lemma \ref{4}, we have $F_{m,k}\preceq_{\alpha}T$ for any $T\in\mathbb{T}_{m}\setminus\{P_m^{(k)}\}$ with equality if and only if $T\cong F_{m,k}$. Thus, $F_{m,k}$ is the second hypertree in $S_{\alpha}$-order among all $k$-uniform hypertrees with $m$ hyperedges.
\end{proof}

A $k$-uniform starlike hypertree, denoted by $S_{n_1,\ldots,n_m}^{(k)}$, is obtained from $m$ hyperpaths of length $n_1,\ldots,n_m$ by sharing a common vertex, where $m\ge1$ and $n_i\ge1$ for $i\in[m]$. When $m=1$, it is a $k$-uniform hyperpath $P_{n_1}^{(k)}$. When $n_i=1$ for each $i\in[m]$, it is a $k$-uniform hyperstar with $m$ hyperedges, denoted by $S_m^{(k)}$.

\begin{thm}\label{lasttree}
Among all $k$-uniform hypertrees with $m$ hyperedges, the last hypertree in $S_{\alpha}$-order is the hyperstar $S_{m}^{(k)}$ for $k\ge2$ and $0<\alpha<1$.
\end{thm}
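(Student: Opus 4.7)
The plan is to finish by a direct comparison of the low-order $\mathcal{A}_\alpha$-spectral moments, using Corollary \ref{firstk+1} to show that $\mathrm{Tr}_{0}$ and $\mathrm{Tr}_{1}$ are constant across $\mathcal{T}_m$ while $\mathrm{Tr}_{2}$ is strictly maximized by $S_m^{(k)}$. This short route is preferable to iterating transformations because $\mathrm{Tr}_{2}$ already separates the hyperstar from every other hypertree.

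First I would record the two structural invariants. Every $k$-uniform hypertree with $m$ hyperedges has exactly $n = m(k-1)+1$ vertices and total degree $\sum_i d_i = km$ (handshake for $k$-uniform hypergraphs). Hence $\phi(1) = (k-1)^{n-1}\alpha \cdot km$ is the same for every $T \in \mathcal{T}_m$, so $\mathrm{Tr}_{t}(\mathcal{A}_\alpha(T)) = \mathrm{Tr}_{t}(\mathcal{A}_\alpha(S_m^{(k)}))$ for $t = 0, 1$. Applying Corollary \ref{firstk+1}, $\mathrm{Tr}_{2}(\mathcal{A}_\alpha(T))$ equals $\phi(2) = (k-1)^{n-1}\alpha^{2}\sum_i d_i^2$ when $k \ge 3$ (since then $2 \in \{1,\ldots,k-1\}$), and equals $\phi(2) + (k-1)^{n-k}k^{k-1}(1-\alpha)^{k}m$ when $k = 2$; in either case the only $T$-dependence is through $\sum_i d_i^2$.

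Next I would show that $\sum_i d_i^2$ is uniquely maximized by $S_m^{(k)}$ over $\mathcal{T}_m$. Among integer sequences of length $m(k-1)+1$ with sum $km$ and each entry at least $1$, convexity of $x \mapsto x^{2}$ (equivalently, the swap $(d_i,d_j) \mapsto (d_i+1,d_j-1)$ with $d_i \ge d_j \ge 2$ strictly increases $\sum d^2$) forces the maximum at the most unbalanced profile: one entry equal to $m$ and $m(k-1)$ entries equal to $1$. The realizability step is the key structural observation: if a hypertree contains a vertex $v$ of degree $m$, then $v$ lies in all $m$ hyperedges, and the acyclic-connected property forces any two distinct hyperedges to share only $v$, so the hypertree must be $S_m^{(k)}$. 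Hence $S_m^{(k)}$ is the unique hypertree attaining the extremal degree sequence.

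Combining these, for every $T \in \mathcal{T}_m \setminus \{S_m^{(k)}\}$ we have $\mathrm{Tr}_{t}(\mathcal{A}_\alpha(T)) = \mathrm{Tr}_{t}(\mathcal{A}_\alpha(S_m^{(k)}))$ for $t = 0, 1$ and $\mathrm{Tr}_{2}(\mathcal{A}_\alpha(T)) < \mathrm{Tr}_{2}(\mathcal{A}_\alpha(S_m^{(k)}))$, giving $T \prec_\alpha S_m^{(k)}$. A parallel argument via Lemma \ref{1} is also available: starting from any $T \neq S_m^{(k)}$ one can always locate a vertex $u$ carrying $s \ge 1$ pendant hyperedges and a vertex $v$ of maximum degree with $d(u) \le d(v) < d(v)+s$, so the $\sigma$-transformation strictly pushes $T$ upward in $S_\alpha$-order; repeating drives $T$ to $S_m^{(k)}$. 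The only delicate point I anticipate is the realizability of the extremal degree sequence, which is handled by the short structural observation above rather than any computation.
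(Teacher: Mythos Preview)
Your proposal is correct; the paper's own proof is precisely your ``parallel argument'': it repeatedly applies the $\sigma$-transformation of Lemma~\ref{1} to drive any $T\in\mathcal{T}_m$ to $S_m^{(k)}$, concluding $T\preceq_\alpha S_m^{(k)}$ with equality only for $T\cong S_m^{(k)}$. Your primary route---directly maximizing $\sum_i d_i^2$ under the constraints $n=m(k-1)+1$, $\sum_i d_i=km$, $d_i\ge 1$ and then checking unique realizability of the extremal sequence by $S_m^{(k)}$---is a mild but legitimate shortcut of the same idea, since Lemma~\ref{1} itself is proved by exactly this $\mathrm{Tr}_2$ comparison.
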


\begin{proof}
For any $k$-uniform hypertree $T$ with $m$ hyperedges, repeating $\sigma$-transformation, $T$ can be changed into $S_m^{(k)}$. From Lemma \ref{1},
we know that $T\preceq_{\alpha}S_m^{(k)}$ for any $T\in\mathcal{T}_m$ with equality if and only if $T\cong S_m^{(k)}$.
Hence, $S_{m}^{(k)}$ is the last hypertree in $S_{\alpha}$-order among all $k$-uniform hypertrees with $m$ hyperedges.
\end{proof}

The following theorem gives the last hypergraph in $S_{\alpha}$-order among all $k$-uniform hypertrees with given diameter.

\begin{thm}\label{treelastd}
Among all $k$-uniform hypertrees with $m$ hyperedges and diameter $d(2\le d\le m)$, the last hypertree in $S_{\alpha}$-order is $S_{t,d-t,1,\ldots,1}^{(k)}(t=\lfloor\frac{d}{2}\rfloor)$ for $k\ge2$ and $0<\alpha<1$.
\end{thm}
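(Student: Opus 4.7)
The plan is to take an arbitrary $k$-uniform hypertree $T\in\mathcal{T}_m$ with $\mathrm{diam}(T)=d$ and $T\not\cong T^{*}:=S^{(k)}_{t,d-t,1,\ldots,1}$ (where $t=\lfloor d/2\rfloor$), and construct a chain of hypertrees of diameter $d$,
$T=T_0\preceq_{\alpha}T_1\preceq_{\alpha}\cdots\preceq_{\alpha}T_N=T^{*}$,
in which at least one step is strict. Every step will be the inverse of a path-sliding transformation (Lemma \ref{3} or Lemma \ref{5}) or a $\sigma$-transformation (Lemma \ref{1}), so the cited lemmas supply the required strict monotonicity.

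First I would fix a diametral hyperpath $P=P_{d}^{(k)}\subseteq T$ with spine vertices $u_0,u_1,\ldots,u_d$. Every hyperedge of $T$ outside $P$ lies in a branch (a sub-hypertree) rooted either at some spine vertex $u_j$ or at a cored vertex of some spine hyperedge $e_i^{(k)}$ of $P$. The bound $\mathrm{diam}(T)=d$ forces a branch at $u_j$ to have length at most $\min(j,d-j)$ (so no branches sit at $u_0$ or $u_d$), and a branch at a cored vertex of $e_i^{(k)}$ to have length at most $\min(i-1,d-i)$.

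The core of the argument is a consolidation step that slides every branch onto the single spine vertex $u_t$, strictly increasing the $S_\alpha$-order at each move while preserving diameter $d$. A branch sitting at a cored vertex of $e_i^{(k)}$ is first moved to an adjacent spine vertex via the inverse of Lemma \ref{3}, after which all branches are rooted at spine vertices. A branch still rooted at $u_j\neq u_t$ is then pushed one step closer to $u_t$ via the inverse of Lemma \ref{5}: from $u_j$ to $u_{j+1}$ when $j<t$, and symmetrically (using the lemma on the relabelled path) from $u_j$ to $u_{j-1}$ when $j>t$. The key diameter check is that moving an attachment toward $u_t$ only weakens the length constraint on the branch, so the global diameter stays equal to $d$. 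After this stage we obtain a hypertree $T'$ which is the coalescence of $P$ at $u_t$ with a single sub-hypertree $B$, and $T\preceq_{\alpha}T'$.

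Finally, $B$ has diameter at most $\min(t,d-t)=t\ge1$. If $B$ is not yet a bouquet of pendant hyperedges at $u_t$, I would apply the $\sigma$-transformation (Lemma \ref{1}) repeatedly, exactly as in the proof of Theorem \ref{lasttree}, to sweep every pendant hyperedge of $B$ onto $u_t$. Each such application strictly increases the $S_\alpha$-order and cannot enlarge the global diameter, since $\mathrm{diam}(B)$ only decreases while $P$ still realises a diametral pair. The process terminates at $T^{*}=S^{(k)}_{t,d-t,1,\ldots,1}$, yielding $T\prec_{\alpha}T^{*}$. The main obstacle will be the diameter bookkeeping during the consolidation step: for each slide of a branch of length $\ell$ from $u_j$ to a new attachment index $j'$, one has to verify simultaneously that the hypotheses of the invoked lemma hold and that $\max(j',d-j')+\ell\le d$. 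Handling this symmetrically on either side of $u_t$ is routine but needs care; once the bookkeeping is in place the closing $\sigma$-reduction is immediate, and the boundary cases $d=2$ (where $T^{*}=S_m^{(k)}$) and $d=m$ (where $T^{*}=P_m^{(k)}$) are consistent with Theorems \ref{lasttree} and \ref{firsttree}.
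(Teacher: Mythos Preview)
Your overall strategy is sound, but the consolidation step has a genuine gap in how Lemmas~\ref{3} and~\ref{5} are invoked. Both lemmas require the hypergraph to decompose as a hyperpath $P^{(k)}$ coalesced with a \emph{single} subhypergraph $\mathcal{H}_0$ at one vertex. When the tree still carries several branches at distinct points of the diametral path, no such decomposition exists: if you take $P^{(k)}$ to be the diametral path, then the complement is attached at several vertices; if you take $\mathcal{H}_0$ to be a single branch, then the complement is not a hyperpath. The proof of Lemma~\ref{5} for $l\ge 2$ genuinely uses this clean structure --- the $\mathrm{Tr}_{k+2}$ computation and the count of $P_{l+1}^{(k)}$-subpaths both rely on the spine vertices near the attachment having degree exactly~$2$ in the hyperpath part --- so you cannot simply cite it while other branches sit on the spine.

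The paper's proof sidesteps this by reversing your two stages. It first applies repeated $\sigma$-transformations (Lemma~\ref{1}) to collapse all off-spine material into $m-d$ pendant hyperedges at a single non-cored vertex $u_j$ of $P_d^{(k)}$, obtaining $T_1$ with $T\prec_\alpha T_1$. Only then, with exactly one attachment point, does it invoke the inverse of Lemma~\ref{5} to slide that bouquet from $u_j$ to $u_{\lfloor d/2\rfloor}$; in this order the single-coalescence hypothesis is always met. Incidentally, neither argument needs the intermediate trees to have diameter~$d$: one only needs the chain $T\prec_\alpha\cdots\prec_\alpha T^*$, so the diameter bookkeeping you flag as the main obstacle is in fact unnecessary, and the real obstruction is the one above.
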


\begin{proof}
If $T$ is a $k$-uniform hypertree with diameter $d$, then the hyperpath $P_d^{(k)}$ is a subhypergraph of $T$. Repeating $\sigma$-transformation, any $k$-uniform hypertree $T$ with $m$ hyperedges and diameter $d$ can be changed into a $k$-uniform hypertree $T_1$ obtained from $P_{d}^{(k)}$ by adding $m-d$ pendant hyperedges to a non-cored vertex of $P_{d}^{(k)}$. From Lemma \ref{1},
it is known that $T\prec_{\alpha}T_1$. By using the inverse third path-sliding transformation repeatedly, we can change $T_1$ into a $k$-uniform hypertree obtained from $P_{d}^{(k)}$ by adding $m-d$ pendant hyperedges to the non-cored vertex $v_{\lfloor \frac{d}{2} \rfloor+1}$ of $P_{d}^{(k)}$ i.e., $S_{\lfloor\frac{d}{2}\rfloor,d-\lfloor\frac{d}{2}\rfloor,1,\ldots,1}^{(k)}$. From Lemma \ref{5}, we know that $T_1\prec_{\alpha}S_{\lfloor\frac{d}{2}\rfloor,d-\lfloor\frac{d}{2}\rfloor,1,\ldots,1}^{(k)}$. Hence, $S_{\lfloor\frac{d}{2}\rfloor,d-\lfloor\frac{d}{2}\rfloor,1,\ldots,1}^{(k)}$ is the last hypergraph in $S_{\alpha}$-order among all $k$-uniform hypertrees with $m$ hyperedges and diameter $d$.
\end{proof}

Let $\mathcal{S}=\{S_{\lfloor\frac{d}{2}\rfloor,d-\lfloor\frac{d}{2}\rfloor,1,\ldots,1}^{(k)}|\ d=2,3,\ldots,m\}$.
When $d=2$, it is obvious that $S_{\lfloor\frac{d}{2}\rfloor,d-\lfloor\frac{d}{2}\rfloor,1,\ldots,1}^{(k)}\cong S_{m}^{(k)}$.
The hyperstar $S_{m}^{(k)}$ is not only the only $k$-uniform hypertree diameter $2$ but also the last hypertree in $S_{\alpha}$-order among all $k$-uniform hypertrees with $m$ hyperedges, then that $S_{m}^{(k)}$ is the last hypertree in $\mathcal{S}$. From Theorem \ref{treelastd}, we know that the second last hypertree in $\mathcal{T}_{m}$ must be the last in $\mathcal{S}\setminus\{S_{m}^{(k)}\}$.

\begin{thm}\label{lastsecondtree}
Among all $k$-uniform hypertrees with $m$ hyperedges, the second last hypertree in $S_{\alpha}$-order is the starlike hypertree $S_{1,2,1,\ldots,1}^{(k)}$ for $k\ge2$ and $0<\alpha<1$.
\end{thm}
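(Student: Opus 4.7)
The plan is to build on the reduction already made in the paragraph preceding the statement: the second last hypertree in $S_\alpha$-order over $\mathcal{T}_m$ is the last element of
\[
\mathcal{S}\setminus\{S_m^{(k)}\}=\bigl\{T_d:=S_{\lfloor d/2\rfloor,\,d-\lfloor d/2\rfloor,\,1,\ldots,1}^{(k)}\;:\;d=3,4,\ldots,m\bigr\},
\]
since, by Theorem \ref{treelastd}, every hypertree of diameter $d\ge 3$ is dominated in $S_\alpha$-order by $T_d$. Because $T_3$ is precisely the hypertree named in the statement, it remains to prove $T_d\prec_\alpha T_3$ for each $d\in\{4,\ldots,m\}$; I would do this already at the level of $\mathrm{Tr}_2$.

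Every hypertree in $\mathcal{T}_m$ has $n=m(k-1)+1$ vertices and degree sum $km$, so $\mathrm{Tr}_0(\mathcal{A}_\alpha(T_d))=n(k-1)^{n-1}$ and $\mathrm{Tr}_1(\mathcal{A}_\alpha(T_d))=\phi(1)=(k-1)^{n-1}\alpha km$ are independent of $d$. I would then read the degree sequence of $T_d$ off its structure: the central vertex has degree equal to the number of arms, which is $m-d+2$; the two long arms of lengths $\lfloor d/2\rfloor$ and $d-\lfloor d/2\rfloor$ supply $(\lfloor d/2\rfloor-1)+(d-\lfloor d/2\rfloor-1)=d-2$ internal connector vertices of degree $2$; the remaining $n-d+1$ vertices are cored and have degree $1$. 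Consequently
\[
\sum_{i=1}^n d_i(T_d)^2=(m-d+2)^2+4(d-2)+(n-d+1)=(m-d+2)^2+3d+n-7.
\]
Regarded as a function of $d\in[3,m]$, this expression has derivative $2d-2m-1\le -1<0$, so it is strictly decreasing in $d$, with maximum over $\{3,\ldots,m\}$ uniquely attained at $d=3$.

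Corollary \ref{firstk+1} now closes the argument: for $k\ge 3$ we have $\mathrm{Tr}_2(\mathcal{A}_\alpha(T_d))=\phi(2)=(k-1)^{n-1}\alpha^2\sum_i d_i^2$, and for $k=2$ the additional term $2(1-\alpha)^2|E(T_d)|$ is independent of $d$ since $|E(T_d)|=m$. Either way, because $0<\alpha<1$, the strict decrease of $\sum_i d_i^2$ in $d$ forces $\mathrm{Tr}_2(\mathcal{A}_\alpha(T_3))>\mathrm{Tr}_2(\mathcal{A}_\alpha(T_d))$ for each $d\in\{4,\ldots,m\}$. Combined with the equality of the $0$-th and $1$-st moments, this is exactly $T_d\prec_\alpha T_3$, so $S_{1,2,1,\ldots,1}^{(k)}=T_3$ is the second last hypertree as claimed. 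The only step that requires any care is the bookkeeping for the degree sequence of $T_d$; no higher-order invariant (such as the $\mathrm{Tr}_{k+2}$-expansion of Theorem \ref{k+2} used for the analogous unicyclic result) is needed, because $\mathrm{Tr}_2$ already separates $T_3$ from every other $T_d$.
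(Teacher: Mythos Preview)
Your proof is correct and follows essentially the same route as the paper: reduce to the family $\{T_d:3\le d\le m\}$ via Theorem~\ref{treelastd}, observe that $\mathrm{Tr}_0$ and $\mathrm{Tr}_1$ are constant on this family, and then show that $\sum_i d_i(T_d)^2$ (hence $\mathrm{Tr}_2$) is strictly decreasing in $d$ by a derivative computation. Your handling of the $k=2$ case is in fact slightly more explicit than the paper's, since you separate out the $(1-\alpha)^2|E|$ term from Corollary~\ref{firstk+1} and note it is $d$-independent.
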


\begin{proof}
Note that the $\mathcal{A}_{\alpha}$-spectral moments $\mathrm{Tr}_0(\mathcal{A}_{\alpha})$ and $\mathrm{Tr}_1(\mathcal{A}_{\alpha})$ are same for all $k$-uniform hypertrees in $\mathcal{S}\setminus\{S_{m}^{(k)}\}=\{S_{\lfloor\frac{d}{2}\rfloor,d-\lfloor\frac{d}{2}\rfloor,1,\ldots,1}^{(k)}|\ d=3,\ldots,m\}$. And
\begin{align*}
&\mathrm{Tr}_{2}(\mathcal{A}_{\alpha}(S_{\lfloor\frac{d}{2}\rfloor,d-\lfloor\frac{d}{2}\rfloor,1,\ldots,1}^{(k)}))\\
=&(k-1)^{m(k-1)-2}\alpha^2\left( (m-d+2)^2+4(d-2)+(m-d)(k-1)+d(k-2)+2 \right)\\
=&(k-1)^{m(k-1)-2}\alpha^2\left( d^2-(2m+1)d+(m+2)^2+m(k-1)-6 \right).
\end{align*}
Let $f(x)=x^2-(2m+1)x+(m+2)^2+m(k-1)-6$ for $3\le x\le m$. We have $f^{'}(x)=2(x-m)-1\le0$ for $3\le x\le m$, then $f(x)\le f(3)$ for $3\le x\le m$. Thus, $\mathrm{Tr}_{2}(\mathcal{A}_{\alpha}(S_{\lfloor\frac{d}{2}\rfloor,d-\lfloor\frac{d}{2}\rfloor,1,\ldots,1}^{(k)}))\le\mathrm{Tr}_{2}(\mathcal{A}_{\alpha}(S_{1,2,1,\ldots,1}^{(k)}))$ for $3\le d\le m$ with equality if and only if $d=3$ i.e., $S_{\lfloor\frac{d}{2}\rfloor,d-\lfloor\frac{d}{2}\rfloor,1,\ldots,1}^{(k)}\cong S_{1,2,1,\ldots,1}^{(k)}$. It follows that $S_{1,2,1,\ldots,1}^{(k)}$ is the last hypergraph in $\mathcal{S}\setminus\{S_{m}^{(k)}\}$.
Hence, $S_{1,2,1,\ldots,1}^{(k)}$ is the second last hypergraph in $S_{\alpha}$-order among all $k$-uniform hypertrees with $m$ hyperedges.
\end{proof}


\section{The extremums of $\mathcal{A}_{\alpha}$-spectral moments for hypertrees and unicyclic hypergraphs}
In this section, we give the extreme values of the $2$-nd order and the $k+2$-nd order $\mathcal{A}_{\alpha}$-spectral moments for linear unicyclic hypergraphs and hypertrees, respectively.

From Theorem \ref{Ung}, we can directly determine the hypergraph which has the largest $2$-nd order $\mathcal{A}_{\alpha}$-spectral moment among all linear unicyclic hypergraphs with given girth.
From Theorem \ref{un3} and Theorem \ref{2ndUm}, the hypergraphs with the largest and the second largest $2$-nd order $\mathcal{A}_{\alpha}$-spectral moments among all linear unicyclic hypergraphs can be directly determined, respectively.

\begin{thm}
Let $k\ge2$ and $0<\alpha<1$.
Among all $k$-uniform linear unicyclic hypergraphs with $m$ hyperedges and girth $g(3\le g\le m)$, $C_{g}^{(k)}\odot S_{m-g}^{(k)}$ has the largest $2$-nd order $\mathcal{A}_{\alpha}$-spectral moment.
Among all $k$-uniform linear unicyclic hypergraphs with $m$ hyperedges, $C_{3}^{(k)}\odot S_{m-3}^{(k)}$ has the largest $2$-nd order $\mathcal{A}_{\alpha}$-spectral moment, and $C_{3;m-4,1,0}^{(k)}$ has the second largest $2$-nd order $\mathcal{A}_{\alpha}$-spectral moment.
\end{thm}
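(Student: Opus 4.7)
The plan is to reduce each part of the statement to a comparison of $\sum_{i=1}^{n} d_i^{2}$ and then invoke the structural results already proved in Sections 4 and 5. The key observation is that within the class of $k$-uniform linear unicyclic hypergraphs with $m$ hyperedges, the number of vertices $n=m(k-1)$ and the degree sum $\sum d_i=mk$ are both fixed, so $\mathrm{Tr}_0(\mathcal{A}_\alpha)$ and $\mathrm{Tr}_1(\mathcal{A}_\alpha)$ are constants on this class. By Corollary \ref{firstk+1}, for $0<\alpha<1$ the only term in $\mathrm{Tr}_2(\mathcal{A}_\alpha(\mathcal{H}))$ that varies with $\mathcal{H}$ is $(k-1)^{n-1}\alpha^{2}\sum d_i^{2}$, since the additional term arising in the case $k=2$ depends only on $|E(\mathcal{H})|=m$. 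Hence maximising $\mathrm{Tr}_2(\mathcal{A}_\alpha)$ on this class is equivalent to maximising $\sum d_i^{2}$, and Lemma \ref{1} tells us that each application of the $\sigma$-transformation strictly increases $\sum d_i^{2}$.

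For the given-girth statement, I would combine this observation with Theorem \ref{Ung}: every linear unicyclic hypergraph with $m$ hyperedges and girth $g$ is converted into $C_g^{(k)}\odot S_{m-g}^{(k)}$ by a finite sequence of $\sigma$-transformations, each strictly raising $\mathrm{Tr}_2$, so $C_g^{(k)}\odot S_{m-g}^{(k)}$ is the unique $\mathrm{Tr}_2$-maximizer. For the overall largest $\mathrm{Tr}_2$, the proof of Theorem \ref{un3} already exhibits the closed form
$$\mathrm{Tr}_{2}(\mathcal{A}_{\alpha}(C_{g}^{(k)}\odot S_{m-g}^{(k)}))=(k-1)^{m(k-1)-1}\alpha^{2}\bigl(g^{2}-(2m+1)g+m(m+k+3)\bigr),$$
which, as a quadratic in $g$ with derivative $2(g-m)-1<0$ on $[3,m]$, is strictly decreasing there; therefore $C_{3}^{(k)}\odot S_{m-3}^{(k)}$ attains the strict overall maximum.

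For the second-largest claim, I would mirror the argument in Theorem \ref{2ndUm}. Any linear unicyclic hypergraph $\mathcal{H}$ with $m$ hyperedges distinct from $C_{3}^{(k)}\odot S_{m-3}^{(k)}$ can be transformed by $\sigma$-moves (each strictly raising $\sum d_i^{2}$) into some hypergraph $U'$ obtained from $C_{3}^{(k)}\odot S_{m-4}^{(k)}$ by attaching a single pendant hyperedge to a vertex whose degree is not $m-2$. A direct computation of $\sum d_i^{2}$ (and hence $\mathrm{Tr}_2$) over the small set of such $U'$ shows the maximum is uniquely realised by attaching the extra pendant to a degree-$2$ non-cored vertex of the triangle, namely $C_{3;m-4,1,0}^{(k)}$, giving the second-largest value of $\mathrm{Tr}_2$.

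The main conceptual obstacle is that the $S_\alpha$-order is a priori only a lexicographic order, so being ``last'' or ``second last'' does not immediately identify the $\mathrm{Tr}_2$-extremum; I must verify that the strict inequalities occur already at index $2$ in every reduction step, so that no tie at $\mathrm{Tr}_2$ spoils the argument. This is precisely what Lemma \ref{1} (strict increase of $\sum d_i^{2}$ under $\sigma$-transformation) and the strict monotonicity in $g$ of the quadratic above deliver, so the claims follow with only the routine bookkeeping sketched above.
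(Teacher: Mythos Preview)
Your proposal is correct and follows essentially the same route as the paper, which simply states that the result follows ``directly'' from Theorems \ref{Ung}, \ref{un3}, and \ref{2ndUm}. You add the observation the paper leaves implicit: that in all three of those proofs the strict inequality already occurs at index $2$ (via Lemma \ref{1} and the explicit $\mathrm{Tr}_2$ computations), so the lexicographic extremality genuinely coincides with $\mathrm{Tr}_2$-extremality here.
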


The following theorem characterizes the hypergraph that has the smallest $k+2$-nd order $\mathcal{A}_{\alpha}$-spectral moment among all unicyclic hypergraphs (with given girth), which can be directly obtained from Theorem \ref{cgpn-g} and Theorem \ref{firstcycle}.

\begin{thm}
Let $k\ge3$ and $0<\alpha<1$.
Among all $k$-uniform linear unicyclic hypergraphs with $m$ hyperedges and girth $g(3\le g\le m)$, $C_{g}^{(k)}\cdot P_{m-g}^{(k)}$ has the smallest $k+2$-nd order $\mathcal{A}_{\alpha}$-spectral moment.
Among all $k$-uniform linear unicyclic hypergraphs with $m$ hyperedges, $C_{m}^{(k)}$ has the smallest $k+2$-nd order $\mathcal{A}_{\alpha}$-spectral moment.
\end{thm}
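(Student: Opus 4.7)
The plan is to combine the minimality results of Theorems \ref{cgpn-g} and \ref{firstcycle} with the explicit formula from Theorem \ref{k+2} to pin down $\mathrm{Tr}_{k+2}$ directly. First I would apply Theorem \ref{k+2} and observe that for any linear $k$-uniform hypergraph $U$ with $k\ge 3$, two of the four terms vanish: $\mathcal{K}_{k+1}=\emptyset$, since any two hyperedges of a linear hypergraph share at most one vertex while two faces of $\mathcal{K}_{k+1}$ share $k-1\ge 2$; and a degree-divisibility check rules out any connected Veblen sub-multi-hypergraph with $k+2$ hyperedges, so $\mathrm{Tr}_{k+2}(\mathcal{A}(U))=0$. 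Thus Theorem \ref{k+2} reduces to
\[
\mathrm{Tr}_{k+2}(\mathcal{A}_{\alpha}(U)) = \phi(k+2) + C\sum_{e\in E(U)}\Bigl(\sum_{\{i,j\}\subset e} d_id_j+\sum_{i\in e} d_i^2\Bigr),
\]
with $C=(k+2)(k-1)^{n-k}k^{k-2}\alpha^2(1-\alpha)^k>0$.

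For the girth-$g$ statement I would split $\mathcal{U}_{m,g}$ according to $\mathbb{U}_{m,g}$. Within $\mathbb{U}_{m,g}$ every hypergraph shares the degree sequence of $m$ twos and $m(k-2)$ ones, so $\phi(k+2)$ is constant, and the proof of Theorem \ref{cgpn-g} already carries out the edge-degree comparison identifying $C_g^{(k)}\cdot P_{m-g}^{(k)}$ as the unique minimizer of the edge-degree sum. For $U\in\mathcal{U}_{m,g}\setminus\mathbb{U}_{m,g}$, writing $x_i=d_i-1$ and using that $\sum_i x_i=m$ with some $x_i\ge 2$, the degree sequence of $U$ strictly majorizes that of any $\hat U\in\mathbb{U}_{m,g}$, so Schur-convexity of $x\mapsto x^{k+2}$ gives a strict increase in $\phi(k+2)$. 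Rewriting the edge-degree sum as $\sum_i d_i^3+\sum_{\{i,j\}\in E^*(U)} d_id_j$, where $E^*(U)$ is the set of vertex pairs sharing a hyperedge, I would track its change under a single inverse $\sigma$-transformation or first path-sliding. The first term is Schur-convex and decreases cleanly under each such step, while a case analysis on how incident pairs are reassigned (analogous to the bookkeeping in Lemmas \ref{1} and \ref{3}) shows the structure-dependent pair sum does not increase. Iterating the reduction from Lemma \ref{Uu} produces $\hat U\in\mathbb{U}_{m,g}$ with $\mathrm{Tr}_{k+2}(U)>\mathrm{Tr}_{k+2}(\hat U)\ge\mathrm{Tr}_{k+2}(C_g^{(k)}\cdot P_{m-g}^{(k)})$.

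Finally, for the global statement I would combine this girth-$g$ lower bound with the explicit computation inside the proof of Theorem \ref{firstcycle}, namely $\mathrm{Tr}_{k+2}(\mathcal{A}_{\alpha}(C_m^{(k)}))-\mathrm{Tr}_{k+2}(\mathcal{A}_{\alpha}(C_g^{(k)}\cdot P_{m-g}^{(k)}))=-(k+2)(k-1)^{m(k-1)-k}k^{k-2}\alpha^2(1-\alpha)^k<0$ for $3\le g\le m-1$. Then any $U\in\mathcal{U}_m$ of girth $g$ satisfies $\mathrm{Tr}_{k+2}(C_m^{(k)})\le\mathrm{Tr}_{k+2}(C_g^{(k)}\cdot P_{m-g}^{(k)})\le\mathrm{Tr}_{k+2}(U)$, with equality only when $U\cong C_m^{(k)}$. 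The main obstacle is the monotonicity of the pair sum $\sum_{\{i,j\}\in E^*(U)} d_id_j$ under the inverse $\sigma$-transformation and the first path-sliding: while $\phi(k+2)$ and the vertex sum $\sum_i d_i^3$ are both handled by straightforward majorization, the pair sum is structure-dependent and requires a careful check of how the neighborhoods of the affected vertices are reconfigured when pendant hyperedges move between vertices of unequal degree.
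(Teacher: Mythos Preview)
The paper's own ``proof'' of this theorem is a one-line remark that it follows directly from Theorems \ref{cgpn-g} and \ref{firstcycle}. Your proposal goes well beyond this: you correctly reduce $\mathrm{Tr}_{k+2}(\mathcal{A}_\alpha(U))$ via Theorem \ref{k+2} by observing that for linear $k$-uniform hypergraphs with $k\ge 3$ both $\mathcal{K}_{k+1}=\emptyset$ and $\mathrm{Tr}_{k+2}(\mathcal{A}(U))=0$ (the divisibility argument you sketch is valid), and within $\mathbb{U}_{m,g}$ you correctly note that the proofs of Lemma \ref{4} and Theorem \ref{cgpn-g} already establish $\mathrm{Tr}_{k+2}(\mathcal{A}_\alpha(C_g^{(k)}\cdot P_{m-g}^{(k)}))\le\mathrm{Tr}_{k+2}(\mathcal{A}_\alpha(U))$, not merely the $S_\alpha$-order. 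For the global statement, your use of the explicit identity from the proof of Theorem \ref{firstcycle} is exactly what is needed.

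The genuine gap you flag---monotonicity of the edge-pair sum $\sum_{\{i,j\}\in E^*(U)}d_id_j$ under inverse $\sigma$-transformation and first path-sliding---is real, and it is worth noting that the paper's bare citation does not address it either: Theorems \ref{cgpn-g} and \ref{firstcycle} establish $S_\alpha$-order, and for $U\in\mathcal{U}_{m,g}\setminus\mathbb{U}_{m,g}$ that order is decided at $\mathrm{Tr}_2$ (via Lemmas \ref{1}, \ref{3}, \ref{Uu}), which says nothing about $\mathrm{Tr}_{k+2}$. Your plan to handle $\phi(k+2)$ and $\sum_i d_i^3$ by Schur-convexity is correct, and the remaining pair-sum term can indeed be controlled: under the first path-sliding (Lemma \ref{3}), writing out $S(U)=\sum_e\bigl(\sum_{i\in e}d_i^2+\sum_{\{i,j\}\subset e}d_id_j\bigr)$ and tracking the edges through $v_0$, through the target cored vertex $c$, and through the attachment edges of $\mathcal{H}_0$, one finds the change has the form $3d_0^2+9d_0+A+d_0(B_a+B_b-B_c)$ with $A\ge 0$ and $B_a+B_b-B_c\ge k-1>0$, so $S$ strictly decreases. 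A parallel bookkeeping works for the $\sigma$-transformation. So your route is sound, but you are right that the pair-sum step is where the actual work lies, and it is work that neither the paper nor your sketch has fully written out.
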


From Theorem \ref{lasttree} and Theorem \ref{lastsecondtree}, we can directly determine the hypertrees with the largest and the second largest $2$-nd order $\mathcal{A}_{\alpha}$-spectral moments among all hypertrees, respectively. From Theorem \ref{firsttree} and Theorem \ref{secondtree}, the hypertrees with the smallest and the second smallest $k+2$-nd order $\mathcal{A}_{\alpha}$-spectral moments among all hypertrees can be directly determined, respectively.

\begin{thm}
Let $0<\alpha<1$.
Among all $k$-uniform hypertrees with $m$ hyperedges, $S_{m}^{(k)}$ (resp. $S_{1,2,1,\ldots,1}^{(k)}$) has the largest (resp. the second largest) $2$-nd order $\mathcal{A}_{\alpha}$-spectral moments for $k\ge2$, and $P_{m}^{(k)}$ (resp. $F_{m,k}$) has the smallest (resp. the second smallest) $k+2$-nd order $\mathcal{A}_{\alpha}$-spectral moments for $k\ge3$.
\end{thm}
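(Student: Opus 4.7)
The plan is to read off this theorem as a direct corollary of the $S_\alpha$-ordering results of the previous two sections, using the low-order moment expressions in Corollary~\ref{firstk+1}. For the $2$nd-order statements, first I would note that every $\mathcal{H}\in\mathcal{T}_m$ has $n=m(k-1)+1$ vertices and degree-sum $km$, so $\mathrm{Tr}_0(\mathcal{A}_{\alpha}(\mathcal{H}))=n(k-1)^{n-1}$ and $\mathrm{Tr}_1(\mathcal{A}_{\alpha}(\mathcal{H}))=\phi(1)=(k-1)^{n-1}\alpha km$ both depend only on $k,m,\alpha$. Thus whenever $\mathcal{H}_1\prec_{\alpha}\mathcal{H}_2$ in $\mathcal{T}_m$ the first index of disagreement is at least $2$, forcing $\mathrm{Tr}_2(\mathcal{A}_{\alpha}(\mathcal{H}_1))\le\mathrm{Tr}_2(\mathcal{A}_{\alpha}(\mathcal{H}_2))$. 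Applied to the $S_\alpha$-last hypertree $S_m^{(k)}$ (Theorem~\ref{lasttree}) and the second-last $S_{1,2,1,\ldots,1}^{(k)}$ (Theorem~\ref{lastsecondtree}), this delivers the largest and second-largest $\mathrm{Tr}_2$ assertions at once.

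For the $(k+2)$nd-order statements I would first restrict to $\mathbb{T}_m$. There every vertex has degree $1$ or $2$, so the degree sequence is determined by $k$ and $m$; since hypertrees contain no $\mathcal{K}_{k+1}$ for $k\ge 3$, Corollary~\ref{firstk+1} says that the traces $\mathrm{Tr}_0,\ldots,\mathrm{Tr}_{k+1}$ are one and the same for every hypertree in $\mathbb{T}_m$. Consequently, within $\mathbb{T}_m$ the phrases ``first in $S_\alpha$-order'' and ``smallest $\mathrm{Tr}_{k+2}$'' coincide (and likewise for second), and Theorems~\ref{firsttree} and \ref{secondtree} pick out $P_m^{(k)}$ and $F_{m,k}$ as the two minimizers within $\mathbb{T}_m$.

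It then remains to show that every $\mathcal{H}\in\mathcal{T}_m\setminus\mathbb{T}_m$ has $\mathrm{Tr}_{k+2}(\mathcal{A}_{\alpha}(\mathcal{H}))$ strictly larger than the value at $P_m^{(k)}$ (and at $F_{m,k}$ for the second-smallest claim). My approach is to track $\mathrm{Tr}_{k+2}$ along the chain of inverse $\sigma$-transformations and first path-sliding moves used in Lemma~\ref{Tt} to reduce such an $\mathcal{H}$ to a hypertree in $\mathbb{T}_m$. Using the explicit formula of Theorem~\ref{k+2}, one checks that $\phi(k+2)$ strictly decreases at every such step by strict convexity of $x\mapsto x^{k+2}$, and that the edge-sum $\sum_{e\in E(\mathcal{H})}\bigl(\sum_{\{i,j\}\subset e}d_id_j+\sum_{i\in e}d_i^2\bigr)$ behaves similarly under shifting a pendant hyperedge from a higher-degree vertex to a lower-degree one. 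The hard part will be to verify that the pure adjacency trace $\mathrm{Tr}_{k+2}(\mathcal{A}(\mathcal{H}))$ does not increase along these reductions; this requires a case-by-case enumeration of the connected Veblen infragraphs of $\mathcal{H}$ with at most $k+2$ hyperedges before and after each move, analogous in spirit to the counting already carried out inside the proofs of Lemmas~\ref{1} and \ref{3}.
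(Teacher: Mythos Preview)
Your treatment of the $2$nd-order claims is correct and is exactly what the paper means by ``directly'': since $\mathrm{Tr}_0$ and $\mathrm{Tr}_1$ are constant on $\mathcal{T}_m$, the $S_\alpha$-last and second-last hypertrees automatically carry the largest and second-largest $\mathrm{Tr}_2$.

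For the $(k+2)$nd-order claims you are right to flag that the $S_\alpha$-order alone does not suffice outside $\mathbb{T}_m$: the comparisons in Lemmas~\ref{1} and \ref{3} are decided at order $2$, which says nothing about $\mathrm{Tr}_{k+2}$. The paper glosses over this and simply asserts ``directly determined'', so your proposal is in fact more careful than the paper here. However, you have misidentified where the difficulty lies. The piece you call the ``hard part'' --- the pure adjacency trace $\mathrm{Tr}_{k+2}(\mathcal{A}(\mathcal{H}))$ --- is identically zero for every $k$-uniform hypertree with $k\ge 3$. Indeed, any connected Veblen infragraph supported on a sub-hypertree must have every leaf hyperedge repeated a multiple of $k$ times (to make the cored vertices $k$-valent); with at least two leaves this forces at least $2k>k+2$ hyperedges, and a single hyperedge with multiplicity $k+2$ fails $k$-valency since $k\nmid k+2$ for $k\ge 3$. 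So there is nothing to enumerate.

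What actually remains is to check that $\phi(k+2)$ and the edge-sum term in Theorem~\ref{k+2} (the $\mathcal{K}_{k+1}$ term also vanishes for hypertrees) each weakly decrease under the inverse $\sigma$-transformation and the first path-sliding move. For $\phi(k+2)$ this is the strict Schur-convexity of $x\mapsto x^{k+2}$ that you already noted. For the edge-sum, rewrite it as $\tfrac12\bigl(\sum_e S_e^{2}+\sum_i d_i^{3}\bigr)$ with $S_e=\sum_{i\in e}d_i$; the second summand is again handled by Schur-convexity, and the first requires a short direct computation of the change at the two affected vertices, entirely parallel to the $\mathrm{Tr}_2$ computations in Lemmas~\ref{1} and \ref{3}. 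Once these checks are done, your reduction to $\mathbb{T}_m$ goes through and the rest of your argument (within $\mathbb{T}_m$, Theorems~\ref{firsttree} and \ref{secondtree} together with the constancy of $\mathrm{Tr}_0,\ldots,\mathrm{Tr}_{k+1}$) is correct.
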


\vspace{3mm}

\noindent
\textbf{Acknowledgements}
\vspace{3mm}
\noindent

This work is supported by the National Natural Science Foundation of China (No. 12071097, 12371344), the Natural Science Foundation for The Excellent Youth Scholars of the Heilongjiang Province (No. YQ2022A002, YQ2024A009), the China Postdoctoral Science Foundation (No. 2024M761510) and the Fundamental Research Funds for the Central Universities.

\section*{References}
\bibliographystyle{plain}
\bibliography{spbib}
\end{spacing}
\end{document}